\definecolor{dullmagenta}{rgb}{0.4,0,0.4}   
\definecolor{darkblue}{rgb}{0,0,0.4}
\newtheorem{thm}{Theorem}[section]
\newtheorem{prop}{Proposition}[section]
\newtheorem{cor}{Corollary}[section]
\newtheorem{lem}{Lemma}[section]
\newtheorem{definition}{Definition}[section]
\newtheorem{example}{Example}[section]
\newtheorem{remark}[thm]{Remark}
\newcommand{\pder}[2]{\ensuremath{ \frac{ \partial #1 }{\partial #2 }
  } }
\newcommand{\into}{\ensuremath{\hookrightarrow}}
\newcommand{\restr}[2]{ \ensuremath{ \left. #1 \right|_{#2} } }
\newcommand{\lb}{ \ensuremath{\langle}}
\newcommand{\rb}{ \ensuremath{\rangle}}
\newcommand{\inter}{\ensuremath{\mathcal{I}}}
\DeclareMathOperator{\ver}{ver}
\DeclareMathOperator{\hor}{hor}
\DeclareMathOperator{\SDiff}{SDiff}
\DeclareMathOperator{\Diff}{Diff}
\DeclareMathOperator{\SL}{SL}
\DeclareMathOperator{\SO}{SO}
\DeclareMathOperator{\ad}{ad}
\DeclareMathOperator{\Ad}{Ad}
\DeclareMathOperator{\Euler}{Euler}
\begin{document}


\title{On the coupling between an ideal fluid and immersed particles}


\author[hoj]{Henry O. Jacobs}
\author[tsr]{Tudor S. Ratiu}
\author[md]{Mathieu Desbrun}





\begin{abstract}
In this paper we present finite dimensional particle-based
models for fluids which respect a number of geometric properties of the Euler equations of motion.
Specifically, we use Lagrange-Poincar\'{e} reduction to
understand the coupling between a fluid and a set of
Lagrangian particles that are supposed to simulate it. We substitute the use of principal connections in 
\cite{CeMaRa2001} with vector field valued interpolations
from particle velocity data.
The consequence of writing evolution equations in terms of
interpolation is two-fold. First, it provides estimates on the
error incurred when interpolation is used to derive the
evolution of the system.  Second, this form of the equations
of motion can inspire a family of particle and hybrid
particle-spectral methods, where the error analysis is
``built-in''. We also discuss the influence of other
parameters attached to the particles, such as shape,
orientation, or higher-order deformations, and how they can
help with conservation of momenta in the sense of Kelvin's
circulation theorem.

\smallskip
\noindent \textbf{Keywords.} Lagrange-Poincar\'{e} equations, ideal fluids,
diffeomorphism groups, particle methods, variational principles, Lagrangian mechanics
\end{abstract}

\subjclass[2011]{37K65,76B99,70H33}


\maketitle

\section{Introduction}
Particles are capable of carrying a variety of information such as position, shape, orientation, et cetera.
Moreover, when this data is described with finitely many numbers, we may consider including it as an input for a computer simulation.
Given this point of view on particles, we seek to understand, from a geometric point of view, how a set of Lagrangian particles can be used as a computational device to numerically simulate an ideal fluid.
We will explore this idea by applying Lagrange-Poincar\'{e} reduction to the exact equations of motion.
The horizontal Lagrange-Poincar\'e equation can be used to inspire a family of particle methods.
Specifically, given an ideal, homogeneous, inviscid, incompressible fluid on a Riemannian manifold $M$ with smooth boundary $\partial M$, oriented by
the associated Riemannian volume,  the configuration space may be described by the group of volume
preserving diffeomorphism, $\operatorname{SDiff}(M)$, and the exact equations of motion for an ideal fluid are the $L^2$-geodesic equations~\cite{Arnold1966}. Throughout the paper
we shall assume that there is a Hodge decomposition; for compact boundary less manifolds this is standard (see, e.g., \cite{Jost2011}), for compact manifolds with
boundary this holds in the case of $\partial$-manifolds (i.e., the
manifold is, in addition, complete as a metric space, see
\cite{Schwarz1995}), for non-compact manifolds this holds
in function spaces with enough decay at infinity (for 
$\mathbb{R}^n$ see, e.g., \cite{Cantor1975}, \cite{GoTr2006}, 
\cite{Troyanov2009} and if the manifold has boundary, see 
\cite{Schwarz1995}).

If $\odot$ is an $N$-tuple of distinct points in $M$, let 
\[
G_\odot := \{ \psi \in\operatorname{SDiff}(M) \mid
\psi(\odot) = \odot \}
\]
be  the isotropy subgroup of the natural action of $\operatorname{SDiff}(M)$ on $M$.
The particle relabeling symmetry of an ideal fluid allows us to project the equations of motion onto the quotient space $T \operatorname{SDiff}(M)/G_\odot$.
Upon choosing an interpolation method, i.e., a means of interpolating a smooth vector field between the particles (see figure \ref{fig:interpolation} on page \pageref{fig:interpolation}), we obtain an isomorphism to a direct sum of vector bundles, $TX \oplus \tilde{\mathfrak{g}}_\odot$.
Here, the base manifold, $X$, is the configuration space of point particles in $M$, and $TX$ is the tangent bundle of $X$ (the state or velocity phase space).
The second component, $\tilde{\mathfrak{g}}_\odot$, is a vector bundle over $X$ whose fiber over $x \in X$ is the infinite dimensional vector space of vector fields which vanish at the particle locations described by $x$.  More generally, we can consider reducing by the subgroup
\[
	G_\odot^{(k)} := \{ \psi \in \SDiff(M) \mid T^{(k)}_\odot \psi = T^{(k)}_\odot id \}.
\]
The resulting Lagrange-Poincar\'{e} equations occur on a direct sum $TX^{(k)} \oplus \tilde{\mathfrak{g}}_{\odot}^{(k)}$, where $X^{(k)}$ is the configuration manifold of a more sophisticated type of particle which carries extra data such as orientation and shape; see \S\ref{sec:higher_order}
for the definition of all these objects.  The equations on $TX^{(k)} \oplus \tilde{\mathfrak{g}}_{\odot}^{(k)}$ describe the coupling between a fluid and this sophisticated type of particle in terms of interpolation methods.  The dynamics on the $TX^{(k)}$ component suggests a new class of particle methods.

\subsection{Organization and main contributions}
To understand the intent of this paper, it helps to explain what we mean by an \emph{interpolation method}.
While a formal definition will be given in \S \ref{sec:interpolation}, the idea is fairly simple.
Given $N$ particles in $M$ equipped with various data (e.g., position, velocity, orientation, higher-order deformations, etc), an interpolation method is a rule which produces a vector field on $M$ that is consistent with this data (see Figure \ref{fig:interpolation}).
Using the concept of an interpolation method, this paper accomplishes the following:
\begin{enumerate}
  \item For each interpolation method, we construct an isomorphism between the quotient space $( T\SDiff(M) ) / G_{\odot}$ and the vector bundle $TX \oplus \tilde{\mathfrak{g}}_{\odot}$ (Proposition \ref{prop:isomorphism}).
  \item We derive equations of motion on $TX \oplus \tilde{\mathfrak{g}}_\odot$ for an arbitrary $G_\odot$-invariant Lagrangian on $\SDiff(M)$ (i.e., the Lagrange-Poincar\'e equations, Theorem \ref{thm:main1}).
  \item We generalize these constructions to higher-order interpolation methods.  The resulting equations describe a family of particle methods wherein the particles carry extra data such as orientation, shape, and higher order deformations (Theorem \ref{thm:particle_method} and Corollary \ref{cor:particle_method}).
  \item The numerical methods of Corollary \ref{cor:particle_method} exhibit a particle-centric analog of Kelvin's circulation theorem (Theorem \ref{thm:discrete_kelvin})                                                                                                                   conserved quantities of the particle method correspond to conserved quantities of the exact equations of motion on $\SDiff(M)$.
  \item We illustrate how first-order interpolation methods induce particle methods which are related to the vortex blob method.
\end{enumerate}

In particular, these goals are accomplished as follows.
In \S \ref{sec:preliminary} we establish our notation and
review the notion of a generalized connection (also called an
\emph{Ehresmann connection}~\cite{MaMoRa1990}) as described in
~\cite{KMS99}.
In\S\ref{sec:reduced_system}, we carry out the reduction
process by the isotropy subgroup of a finite set of 
particles for an ideal incompressible homogeneous inviscid
fluid.
In \S\ref{sec:higher_order} we discuss reduction by
higher-order isotropies which allows us to study
particles with orientation, shape, and other  attributes.
In certain circumstances, this additional information 
produces particle methods which exhibit conservation
laws found in the exact dynamics on $\SDiff(M)$.
Finally, in \S\ref{sec:particle_methods}, we formulate
a family of particle methods induced by an interpolation
method and discuss some implications for the error analysis
of these methods. We conclude that it is possible to
construct hybrid particle-spectral methods for fluids within
this family.  Moreover, we show that the vortex blob
algorithm fits within this family of methods and that the
horizontal equations are a guide for corrections that allow
for the deformation of vortex blobs. We close
with \S\ref{sec:other_fluids}, where we summarize how to
extend these constructions to complex fluids, turbulence
models, and the template matching problems which
occur in medical imaging.

\subsection{Previous Work}
It was shown in~\cite{Arnold1966} that the Euler equations of
motion for an ideal, homogeneous, inviscid, incompressible
fluid on an oriented Riemannian manifold $M$ with smooth boundary are the spatial
(or Eulerian) representation of the geodesic equations on the
group of volume preserving diffeomorphisms,
$\operatorname{SDiff}(M)$.  This observation gave rise to a
new perspective on fluid mechanics which lead to many
developments, notably the proof of well posedness
\cite{EbinMarsden1970} and various
extensions ranging all the way to charged fluids,
magnetohydrodynamics, and even complex fluids with advected
parameters (see, e.g.,~\cite{Holm2002,GR2009}).  All
of these systems are Lagrangian on the tangent bundle of
groups of diffeomorphisms of a Riemannian manifold $M$.
Additionally, these theories utilize the
particle relabeling symmetry of the system to perform
Euler-Poincar\'{e} reduction and thus bring the dynamics to the
Lie algebra of this group~\cite[chapter 13]{MandS}.

As a result of this $\SDiff(M)$ symmetry We may consider 
reducing by subgroups of $\SDiff(M)$.
This would be a special case of Lagrange-Poincar\'{e}
reduction introduced and developed in~\cite{CeMaRa2001}.  In particular, we may consider
reducing by isotropy groups of a set of points in $M$.  Such
an approach is already mentioned in~\cite{MaWe1983} for vortex dynamics and in~\cite{MuDe2010} for the
purpose of landmark matching problems; see also the
references cited therein. However, to the best of our
knowledge, Lagrange-Poincar\'{e} reduction has not been
performed on such systems in the framework of~\cite{CeMaRa2001}.

\section{Preliminary Material}\label{sec:preliminary}
Before introducing our contributions, we review generalized
connections and volume-preserving diffeomorphisms and prove
a few important theorems.

\subsection{Generalized Connections}

In this section we introduce the notion of a generalized
connection, as presented in~\cite{KMS99}, and prove some
useful propositions for the purpose of this paper.

\begin{definition}
Let $\pi_E:E\to M$ be a vector bundle and
$\tau_E:TE\to E$ the tangent bundle of $E$.  The
\emph{vertical bundle} is the vector bundle $\pi_{V(E)}:V(E)
\to E$ where $V(E) := \mathrm{kernel}(T\pi_E)$ and
$\pi_{V(E)} := \restr{ \tau_E }{V(E) }$.
\end{definition}

The \emph{vertical lift} operator,
$v^\uparrow:E \oplus E \to V(E)$, is defined by
\[
v^\uparrow(v_m, w_m) :=
\left.\frac{d}{d\epsilon}\right|_{\epsilon=0}
\left(v_m + \epsilon w_m \right),
\]
for all $v _m, w _m\in E_m: = \pi_E^{-1}(m)$.
This establishes an isomorphism between the vector bundles
$\operatorname{proj}_1: E \oplus E \rightarrow E$ and
$\pi_{V(E)}: V(E)\rightarrow E$, where $\operatorname{proj}_1$
denotes the projection onto the
first summand. The \emph{fiber derivative},
$\pder{f}{e}:E \to E^*$, of a function
$f \in C^{\infty}(E)$ is defined by
\begin{equation}
\label{fiber_derivative}
\left\langle \pder{f}{e}(e) , e' \right\rangle :=
\left\langle df(e), v^\uparrow(e,e') \right\rangle \equiv
\left.\frac{d}{d\epsilon}\right|_{\epsilon=0}
f(e + \epsilon e').
\end{equation}
This notation suggests that we think of the fiber derivative $\pder{f}{e}$ as a type of partial derivative in which we only vary the `fiber-component'.  We shall also need a notion of partial differentiation with
respect to the base space $M$.  That is, we need
to understand what is meant by $\pder{f}{m}$
evaluated at $e \in E$, as an element of $T^*_mM$ over the base
point $m = \pi_E(e)$.  This is obtained by the use of a
covariant derivative.

\begin{definition} \label{def:generalized_connection}
Let $\pi_E:E \to M$ be a vector bundle.  A
\emph{horizontal bundle} is a subbundle, $H(E) \subset TE$,
such that $TE = H(E) \oplus V(E)$.  This defines projectors
$\hor:TE \to H(E)$ and $\ver:TE \to V(E)$.  The vertical
projector, $\ver$, is called a \emph{generalized connection}.
If $C^{\infty}(I;E)$ denotes the set of smooth curves in $E$ on
an open interval $I \subset \mathbb{R}$, then the
\emph{covariant derivative} induced by $H(E)$ (or $\ver$) is
the map $\frac{D}{Dt} : C^{\infty}(I;E) \to E$ given by
\begin{equation}
\label{covariant_derivative}
\frac{De}{Dt} = v_{\downarrow}\left( \ver \left( \frac{de}{dt}
\right) \right)
\end{equation}
Where $v_\downarrow:V(E) \to E :=
\operatorname{proj}_2 \circ (v^\uparrow)^{-1}$ and
$\mathrm{proj}_2: E \oplus E \to E$ is the projection onto
the second component.
\end{definition}

Naturally, the covariant derivative of a curve associated 
to a generalized connection on $E$ (see 
\eqref{covariant_derivative}) induces a covariant derivative
$\frac{D\alpha}{Dt}$ of a curve $\alpha(t)$ in the dual vector bundle $E^*$ by the relation
\[
	\frac{d}{dt} \left \langle \alpha , e \right \rangle = \left \langle \frac{D \alpha}{Dt} , e \right \rangle + \left \langle \alpha , \frac{De}{Dt} \right \rangle\,,
\]
where $e(t) \in E$ is an arbitrary curve.

Moreover, a generalized connection on $E$ induces the 
\emph{horizontal lift} operator
$h^\uparrow:E \oplus TM \to H(E)$ in the following way. 
Denote, as usual, by $V_e(E), H_e(E) \subset T_eE$ 
the fibers of the vertical and horizontal bundles, 
respectively, and note that $\dim V_e(E) = \dim E_{\pi_E(e)}$, 
$\dim H_e(E)= \dim T_eE - \dim V_e(E) = \dim E - 
\dim E_{\pi_E(e)} = \dim M$. Since $T_e\pi_E|_{H(E)}: H_e(E) 
\rightarrow T_{\pi_E(e)}M$ is injective, the dimension
count above implies that $T_e\pi_E|_{H(E)}$ is an isomorphism. Define the
\emph{horizontal lift} operator by 
\[
h^\uparrow(e, \dot{m}):= \left[\restr{(T_e\pi_E)}{H(E)}
\right]^{-1}(\dot{m}), \quad e \in E_m, \quad \dot{m} \in 
T_mM,
\]
i.e., $h^\uparrow( e , \dot{m})$ is the unique
horizontal vector in the fiber $T_eE$ such that
$T\pi_E( h^\uparrow(e,\dot{m})) = \dot{m}$.
Note that the horizontal lift operator induces a vector bundle 
isomorphism 
\begin{align*}
H(E) &\stackrel{\sim}\longrightarrow \pi_E^*(TM): = 
\left\{(e, \dot{m}) \in E \times TM \mid \pi_E(e) = 
\tau_M( \dot{m}) \right\}\\
v_e &\longmapsto\left(e, T_e \pi_E(v_e) \right)\\
h^\uparrow(e, \dot{m}) &\longmapsfrom (e, \dot{m})
\end{align*}
covering the identity on $E$ between the 
horizontal subbundle $H(E) \rightarrow E$ and the the 
pull-back bundle $\pi_E^*(TM) \rightarrow E$ of the tangent 
bundle $\tau_M:TM \rightarrow M$ by the vector bundle projection 
$\pi_E:E \rightarrow M$.

Finally, the choice of a generalized connection allows us to 
define the `partial derivative' of a function with respect to 
the base manifold.

\begin{definition} \label{def:partial_cov}
Let $\ver$ be a generalized connection on a vector bundle 
$\pi:E \to M$ and $V$ a vector space.  Let 
$f \in C^{\infty}(E, V)$.  Define the 
covariant derivative $\pder{f}{m}:E \to T^*M \otimes V$ of 
$f$ to be
the fiber bundle map covering the identity on $M$ given by
\begin{equation}
\label{covariant_derivative_function}
\left\langle \pder{f}{m}(e) , \delta m \right\rangle := \left
\langle df(e) , h^\uparrow(e,\delta m) \right\rangle \in V
\end{equation}
for all $e \in E$ and $\delta m \in T_mM$.
\end{definition}

By construction, this means that the total exterior
derivative $df\in\Omega^1(E ; V)$ acting on the velocity of a curve
$e(t) \in E$ over $m(t) \in M$ can be
written as the sum (see \eqref{fiber_derivative}, 
\eqref{covariant_derivative_function})
\begin{equation}
\left\langle df(e) , \frac{de}{dt} \right\rangle = 
\left\langle \pder{f}{m}(e) , \frac{dm}{dt} \right\rangle + 
\left\langle\pder{f}{e}(e) , \frac{D e}{Dt} \right\rangle. 
\label{eq:df=dmf+def}
\end{equation}

If $(M, g)$ is a Riemannian manifold we may use the 
Levi-Civita connection, defined on the vector bundle $TM$, 
to implement the previous constructions. Let $\Gamma_{ij}^k
: = g^{kh} \left(\frac{\partial g_{hi}}{\partial m^j}
+ \frac{\partial g_{hj}}{\partial m^i} - 
\frac{\partial g_{ij}}{\partial m^h} \right)$ be the 
Christoffel symbols of the Levi-Civita connection, where
$(m^1, \ldots, m^n)$ are local coordinates on $M$ and 
$g_{ij}: = g\left(\frac{\partial}{\partial m^i}, 
\frac{\partial}{\partial m^j}\right)$ the local components
of the metric tensor $g$. Since $\Gamma_{ij}^k = 
\Gamma_{ji}^{k}$ for all indices, it follows that this
connection is torsion free (and vice-versa). In natural
tangent bundle charts $(m^1, \dots, m^n, v^1, \dots, v^{n})$, 
the covariant derivative of a curve in $TM$ with respect to 
the Levi-Civita connection is given, locally, by 
$\frac{Dv^k}{Dt} = \frac{dv^k}{dt} + 
\Gamma_{ij}^{k} v^i \frac{dm}{dt}^j$. Therefore, the
  horizontal lift induced by the Levi-Civita connection is given locally by
\begin{align*}
h^\uparrow((m^i,v^j),(m^i,\delta m^j)) =
((m^i,v^j), (\delta m^l , -\Gamma_{ij}^{k} v^i \delta m^j)), 
\end{align*}
which implies that the covariant derivative \eqref{covariant_derivative_function} of a function $f \in C^{\infty}(TM, V)$ is given in local coordinates by
\begin{align}
	\left\langle \pder{f}{m}(v) , \delta m \right\rangle = \pder{f}{m^k} \delta m^k - \pder{f}{v^k} \Gamma_{ij}^{k}  v^i \delta m^j \,,
\label{eq:hor_lift_levi_civita}
\end{align}
where $v , \delta m \in T_mM$.
Note that the formulas for the horizontal lift $h^\uparrow$ and $\frac{\partial f}{\partial m}$ given above, are valid 
for any torsion free connection on $TM$, not just the 
Levi-Civita connection. This allows us to easily prove the 
following proposition which relates the notion of `torsion-free' to the exterior derivative.

\begin{prop} \label{prop:da(v,w)=da(w)v-da(v)w}
Let $\alpha \in \Omega^1(M, V)$.  Given a torsion free connection on
$TM$ (such as a Levi-Civita connection) we can consider the covariant derivative $\pder{ \alpha}{m}: TM \to T^{\ast}M$ by viewing $\alpha$ as a smooth function on $TM$.  This induces
the identity
\begin{equation}
d\alpha(m)( v , w ) = \left\langle \pder{\alpha}{m}(w) , v
\right\rangle - \left\langle
\pder{\alpha}{m}(v) , w \right\rangle\in V, \label{eq:da}
\end{equation}
where $v, w \in T_m M$ and `$d$' is the exterior derivative on $M$.
\end{prop}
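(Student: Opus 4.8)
The plan is to reduce \eqref{eq:da} to a local coordinate computation built directly on the formula \eqref{eq:hor_lift_levi_civita}, which already encodes the horizontal lift of an arbitrary torsion-free connection on $TM$. Since both sides of \eqref{eq:da} are defined pointwise (the exterior derivative is a local operator and $\pder{\alpha}{m}$ is a fiber bundle map), it suffices to verify the identity in a natural tangent-bundle chart. First I would write $\alpha = \alpha_l \, dm^l$ with $V$-valued component functions $\alpha_l$; viewing $\alpha$ as a function $f \in C^\infty(TM, V)$ then gives $f(m,v) = \alpha_l(m) v^l$, so that $\pder{f}{m^k} = \pder{\alpha_l}{m^k} v^l$ and $\pder{f}{v^k} = \alpha_k$.

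Substituting these into \eqref{eq:hor_lift_levi_civita} produces the closed form
\[
\left\langle \pder{\alpha}{m}(v), \delta m \right\rangle = \pder{\alpha_l}{m^k}\, v^l\, \delta m^k - \alpha_k\, \Gamma_{ij}^k\, v^i\, \delta m^j .
\]
I would then assemble the antisymmetric combination on the right-hand side of \eqref{eq:da}. The derivative-of-components part contributes $\pder{\alpha_l}{m^k}(v^k w^l - w^k v^l)$, which is precisely the coordinate expression for $d\alpha(m)(v,w)$ read off from $d\alpha = \pder{\alpha_l}{m^k}\, dm^k \wedge dm^l$. So once the remaining terms are shown to vanish, the identity follows by inspection.

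The crux of the argument, and the only place the hypothesis enters, is the cancellation of the Christoffel terms. In the antisymmetrized difference $\langle \pder{\alpha}{m}(w), v \rangle - \langle \pder{\alpha}{m}(v), w \rangle$ the Christoffel contribution is $-\alpha_k\, \Gamma_{ij}^k (w^i v^j - v^i w^j)$, and the symmetry $\Gamma_{ij}^k = \Gamma_{ji}^k$ (equivalent to torsion-freeness, as noted just before \eqref{eq:hor_lift_levi_civita}) forces this to vanish after relabeling $i \leftrightarrow j$. I expect this to be the main obstacle only in the sense that it is the heart of the statement: everything else is bookkeeping, and it is exactly torsion-freeness that makes $\pder{\alpha}{m}$ antisymmetrize to the exterior derivative rather than to $d\alpha$ plus a torsion correction.

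Finally, I would record a coordinate-free reading as an independent check. The same local formula shows that $\langle \pder{\alpha}{m}(v), \delta m \rangle = (\nabla_{\delta m}\alpha)(v)$, where $\nabla$ is the connection; under this identification \eqref{eq:da} is simply the classical identity $d\alpha(v,w) = (\nabla_v \alpha)(w) - (\nabla_w \alpha)(v)$, which holds for any torsion-free $\nabla$ because $\nabla_v w - \nabla_w v = [v,w]$ absorbs the Lie-bracket term in the invariant formula for $d\alpha$.
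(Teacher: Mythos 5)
Your proof is correct and follows essentially the same route as the paper's: both work in a natural tangent-bundle chart, view $\alpha$ as the function $\alpha_i(m)v^i$ on $TM$, substitute into \eqref{eq:hor_lift_levi_civita}, identify the antisymmetrized derivative terms with the coordinate expression of $d\alpha$, and cancel the Christoffel contributions using $\Gamma_{ij}^k = \Gamma_{ji}^k$. Your closing coordinate-free check via $d\alpha(v,w) = (\nabla_v\alpha)(w) - (\nabla_w\alpha)(v)$ is a nice addition that mirrors the intrinsic proof the paper only cites (Remark \ref{rmk:vector_valued_forms}), but it is supplementary rather than a different argument.
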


\begin{proof}
This may be verified in a local coordinate chart, where
$\alpha(m) =\alpha_i (m) dm^i$.  Viewing $\alpha$
as a function on $TM$ with values in $V$ we find that in a local vector bundle chart $\alpha(m,v) = \alpha_i(m) v^i$.
Therefore, $\pder{\alpha}{v^k}(m) = \alpha_k(m)$. Thus, by invoking \eqref{eq:hor_lift_levi_civita} we arrive at
\[
\left\langle \pder{\alpha}{m}(m^i,w^j) ,
v^k \frac{\partial}{\partial m^k}\right\rangle =
\pder{\alpha_i}{m^j}(m) w^i v^j - \alpha_k(m)
\Gamma^k_{ij}(m) w^i v^j.
\]
Since $d\alpha(m)(v,w) = \left( \pder{\alpha_i}{m^j} -
\pder{\alpha_j}{m^i} \right) v^j w^i$, we can see that
\eqref{eq:da} follows from the torsion free property,
$\Gamma_{ij}^{k} = \Gamma_{ji}^{k}$.
\end{proof}

\begin{remark} \label{rmk:vector_valued_forms}
{\rm 
This proof is (formally) extendable to infinite dimensional manifolds by
noting the irrelevance of the local coordinate description
of $\Gamma_{ij}^k$.  Alternatively, a true coordinate-free proof is given in \cite[Proposition 2.4]{JaVa2012}. If $E$ is a trivial bundle, then this construction can be extended to $E$-valued one-forms
on $M$ by defining the exterior derivative on a tensor product by
\[
d( e \otimes \alpha ) := e \otimes d\alpha
\]
where $e$ is a section of a the vector bundle
$\pi:E \rightarrow M$ and $\alpha \in \Omega^1(M)$ is an
ordinary one-form on $M$. \quad $\lozenge$
}
\end{remark}

\subsection{Diffeomorphism Groups} \label{sec:diffeomorphism}
Let $M$ be a finite dimensional connected Riemannian manifold 
with metric $\left\langle \cdot , \cdot \right\rangle_M : TM 
\oplus TM \to \mathbb{R}$.
The set of volume preserving diffeomorphisms, $\SDiff(M)$, of 
$M$ is an infinite dimensional Fr\'echet Lie group.
A tangent vector $v_\varphi \in T_\varphi\SDiff(M)$ over a 
group element $\varphi \in \SDiff(M)$ is a map $v_\varphi: 
M \to TM$ such that $v_\varphi(m) \in T_{\varphi(m)}M$. Let
$\mathfrak{X}_{\mathrm{div}}(M)$ denote the vector space of
divergence free vector fields on $M$. The \textit{left} 
Lie algebra of $\SDiff(M)$, i.e., the Lie algebra
obtained by using the left
invariant vector fields on $\SDiff(M)$, is 
$(\mathfrak{X}_{\mathrm{div}}(M), 
- [\cdot , \cdot ]_{\rm Jacobi-Lie} )$, where
$[\cdot , \cdot ]_{\rm Jacobi-Lie}$ is the usual bracket of
vector fields on $M$ (locally, this means 
$[u, v]_{\rm Jacobi-Lie} = 
Dv\cdot u - Du\cdot v$). Therefore, $T_\varphi\SDiff(M) = \{u\circ \varphi \mid u \in \mathfrak{X}_{\mathrm{div}}(M)\}$.

Let $\odot = (\odot_1,\dots,\odot_N) \in M^N$ be an $N$-tuple of distinct points in $M$. We define the isotropy group
\begin{align}
G_\odot := \{\psi\in\SDiff(M) \mid \psi(\odot_k) = \odot_k ,\; k = 1, \ldots, N\}, \label{eq:G_odot}.
 \end{align}
 As a subgroup, $G_\odot$ acts on $\SDiff(M)$ by 
composition on the right, namely, $\SDiff(M)\ni \varphi
\mapsto \varphi \circ \psi \in \SDiff(M)$ for $\psi \in G_{\odot}$.
It is elementary to see that the quotient of $\SDiff(M)$ by the right action of $G_\odot$ is the manifold
\[
X := \{ (m_1,\dots,m_N) \in M^N \mid m_i \neq m_j\}
\]
with associated smooth projection $\pi_X(\varphi):= \left(
\varphi(\odot_1), \ldots, \varphi(\odot_N) \right)$.
In fact, $\pi_X:\SDiff(M) \rightarrow X$ is a right principal
$G_{\odot}$-bundle. Note that if $v_\varphi \in T_\varphi\SDiff(M)$, then $T_\varphi\pi_X(v_\varphi) = 
\left(v_\varphi(\odot_1), \ldots, v_\varphi(\odot_N)\right)\in 
TX$.

To streamline notation, we will write
$\varphi(\odot): = \left(\varphi(\odot_1), \ldots, \varphi(\odot_N) \right) \in X$. Similarly, if $v_\varphi \in 
T_\varphi\SDiff(M)$ (a vector field covering $\varphi$), 
we will write $v_\varphi(\odot): = \left(v_\varphi(\odot_1),
\ldots, v_\varphi(\odot_N)\right) \in TX$.

In \S \ref{sec:reduced_system} we will define a Lagrangian system on $T\SDiff(M)$ which is invariant under right multiplication by $G_\odot$.
We will then  implement Lagrange-Poincar\'{e} reduction along the lines of~\cite{CeMaRa2001} to obtain equations on the quotient space $( T\SDiff(M) ) / G_{\odot}$.
This task is less trivial than it may first sound because $(T\SDiff(M)) / G_\odot \neq T( \SDiff(M)/G_\odot)$.
In particular, $(T\SDiff(M))/G_\odot$ is a vector bundle over $X$ and the reduced equations describe how the motion of a finite set of particles couples to the fluid.

\begin{remark}{\rm 
We may consider applying the ideas contained in this paper to the full set of diffeomorphisms, $\Diff(M)$.  As our aim here is towards simulation of incompressible fluid equations, we have chosen to use $\SDiff(M)$.  However, extending the ideas to $\Diff(M)$ is not difficult, and such extensions may have some far reaching applications, such as in medical imaging \cite{Beg2005,Bruveris2011,Sommer2013}. } \quad $\lozenge$
\end{remark}

\subsection{Interpolation Methods as Generalized Connections}
\label{sec:interpolation}
Define the vector bundle $X$ given by
\[
\tilde{\mathfrak{g}}_\odot := \{ (x , \xi_x) \; \vert \; x \in
X ,\; \xi_x \in \mathfrak{g}_x \},
\]
where $\mathfrak{g}_x$ is the Lie algebra of the isotropy
group $G_x$ for each $x \in X$, i.e., the set of divergence
free vector fields which vanish at the $N$ particle locations 
described by $x \in X \subset M^N$
\footnote{The bundle $\tilde{\mathfrak{g}}_\odot$ is identical to the adjoint bundle $\frac{ \SDiff(M) \times 
\mathfrak{g}_\odot }{ G_\odot }$ when equipped with the fiberwise Lie bracket $ [(x,\xi_x) , (x,\eta_x) ] := (x , -[\xi_x,\eta_x]_{\mathrm{Jacobi-Lie}})$ and the projection $\tilde{\pi}(x,\xi_x) = x$.}.  Just as we have $\ad$-notation on a Lie algebra, we can utilize the $\ad$-notation on the Lie algebra bundle $\tilde{\mathfrak{g}}_\odot$.  For each $(x,\xi_x) \in \tilde{\mathfrak{g}}_\odot(x)$ we let $\ad_{(x,\xi_x)} : \tilde{\mathfrak{g}}_\odot(x) \to \tilde{\mathfrak{g}}_\odot(x)$ denote the linear endomorphism
\begin{equation}
\label{adjoint_bundle_bracket}
\ad_{(x,\xi_x)}( x, \eta_x) := [ (x,\xi_x) , (x, \eta_x)]
:= (x , -[\xi_x,\eta_x]_{\mathrm{Jacobi-Lie}}),
\end{equation}
where index ``Jacobi-Lie'' refers to the usual bracket of vector 
fields on a manifold. Additionally, we let $\ad^*_{(x,\xi_x)}: \tilde{\mathfrak{g}}_\odot^*(x) \to \tilde{\mathfrak{g}}_\odot^*(x)$ denote the dual endomorphism to $\ad_{(x,\xi_x)}$ on the dual vector-bundle $\tilde{\mathfrak{g}}_\odot^*$.

We will ultimately identify the quotient $(T\SDiff(M)) / G_\odot$ with $TX \oplus \tilde{\mathfrak{g}}_\odot$.  However, this identification is \emph{not} canonical.
It turns out that choosing such an identification boils down to the choice of an \emph{interpolation method}. 

\begin{definition}
\label{def_interpolation}
A $\mathfrak{X}_{\mathrm{div}}(M)$-valued one-form
$\inter \in \Omega^1(X ; \mathfrak{X}_{\mathrm{div}}(M) )$ on $X$ such that
$\inter(\dot{x}_1, \ldots, \dot{x}_N) (x_k) = \dot{x}_k$ for 
all $k=1, \ldots, N$, $\dot{x} \in TX$, and
$(x_1, \ldots, x_N) = \tau_X(\dot{x}_1, \ldots, \dot{x}_N)$ is called an
\emph{interpolation method}  {\rm (}see Figure \ref{fig:interpolation}{\rm ), where $\tau_X:TX \rightarrow X$ is the tangent bundle projection.
}\end{definition}

  \begin{figure}[h] 
     \centering
   \begin{tikzpicture}[thick,scale=0.7]
   	\draw[dashed] (-2.1,-2.1) rectangle (2.1,2.1);
	
	\def \px {{ -1.5 , -1.5 , 1.5 , 1.2, -0.4}}
	\def \py {{-1.5 , 1.0 , 1.5 , 0.4, 0.2 }}
	
	\foreach \i in {0,...,4}{
		\draw[fill=red] ( \px[\i]  , \py[\i] ) circle (0.1cm); 
		\draw[fill=red,xshift=6cm] ( \px[\i]  , \py[\i] ) circle (0.1cm); 
		\draw[->,draw=blue] (\px[\i],\py[\i]) -- ({\px[\i] - 0.2*(\py[\i] + 1)},{ 0.2*\px[\i] + \py[\i]});
		\draw[->,draw=blue,xshift=6cm] (\px[\i],\py[\i]) -- ({\px[\i] - 0.2*(\py[\i] + 1)}, {0.2*\px[\i] + \py[\i]});
	}
   	\draw[dashed,xshift=6cm](-2.1,-2.1) rectangle (2.1,2.1);
   	\foreach \x in {-2.0,-1.5,...,2.0}
		\foreach \y in {-2.0,-1.5,...,2.0}
			\draw[->,draw=blue,xshift=6cm] (\x,\y) -- ({\x-0.2*(\y+1)} ,{0.2*\x+\y});
	\draw[->,line width= 2pt] (1,0) .. controls (2,1) and (4,1)  .. node[above,scale=2]{$\inter$} (5,0) ;
   \end{tikzpicture}
     \caption{Schematic representation of an Interpolation Method}
     \label{fig:interpolation}
  \end{figure}
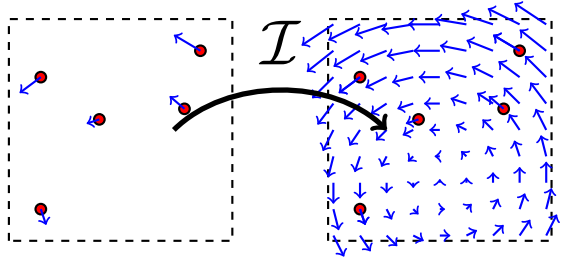

\begin{example} \label{ex:interpolation_method}
{\rm
In the case of Euler fluids, one could desire an interpolation method, $\inter \in \Omega^1( X; \mathfrak{X}_{\mathrm{div}}(M) )$, such that
\begin{align}
	\int_{M}{\lb \inter(\dot{x})(m),\xi_x(m) \rb d \mathrm{vol}(m)} = 0 \quad, \quad \text{for all } (x,\xi_x) \in \tilde{\mathfrak{g}}_\odot. \label{eq:mech_connection}
\end{align}
Such an interpolation method would correspond to choosing the \emph{mechanical connection}~\cite[\S 2.4]{MaMoRa1990}.
However, no such interpolation method exists.
For if $\dot{x} \in T_xX$ is non-zero, then the vector field $\inter(\dot{x})$ would need to vanish on the subset of $M$ complementary to the particles and simultaneously be non-zero when evaluated on at least one of the particles.
This does not define a smooth vector field and thus we can not expect to find an interpolation method which satisfies \eqref{eq:mech_connection}.
Nonetheless, regularized constructions can be made.
For example, if $M = \mathbb{R}^d$, and $\mathbb{I}: \mathfrak{X}(\mathbb{R}^d) \to \mathfrak{X}(\mathbb{R}^d)$ is a positive definite $\operatorname{SE}(d)$-invariant operator
($\operatorname{SE}(d)$ denotes the special Euclidean group
of proper rotations and translations in $\mathbb{R}^d$), we may define the inner product
\[
\left\langle u , v \right\rangle_{\mathbb{I}} = \int_{\mathbb{R}^d}{ u(m) \cdot \left[
    \mathbb{I} v \right] (m) d\mathrm{vol}(m)}.
\]
If the Green's function $G(\| x^i \|)$ associated to $\mathbb{I}$ is a continous function, then $\mathbb{I}$ naturally induces an interpolation method
\[
\inter( \dot{x} )(m) = \sum_{i}{ G(\|m - x_i\|) \dot{x}^i}.
\]
For example, if $\mathbb{I} = 1 - \alpha^2 \Delta$ for some 
$\alpha > 0$, then $G(\|x\|) = \exp( -\|x\|/\alpha)$.  This is not directly applicable in the context of ideal
incompressible fluids since this interpolation method does not produce
divergence free vector fields and the operator 
$1-\alpha^2\Delta$ is not naturally
identified with ideal fluids.  However, this construction is natural in the case of
the EPDiff equation with respect to the Lagrangian induced by the $H^1$-norm~\cite[part II]{HoScSt2009}. This construction has been generalized to inertia operators of the form
\[
	\mathbb{I} = \left( 1 - \frac{\alpha^2}{p} \Delta \right)^p \left(1 - \frac{1}{\epsilon^2} \mathrm{div} \circ \nabla \right)
\]
for $p \geq \frac{ \dim(M) + 3}{2}$ and $\epsilon > 0$ to products Greens functions of higher differentiability.  This would produce interpolation methods of higher differentiability as well~\cite{Mumford_Michor}. \quad $\lozenge$
}
\end{example}

\begin{remark} \label{rmk:connections} {\rm Even though we shall formulate all
constructions in terms of an interpolation method 
$\mathcal{I}$, we want to point out there is a bijective
correspondence between interpolation methods and
principal connections on the right $G_\odot$-bundle
$\pi: \SDiff(M)\ni \varphi \longmapsto \varphi(\odot) \in X$ 
(see, e.g., \cite{Bleecker1981}, \cite{KMS99}, or 
\cite{MaMoRa1990} adapted for right actions). We have $T_\varphi\pi(v_\varphi) = 
v_\varphi(\odot)$, for any $v_\varphi\in T_\varphi \SDiff(M)$ 
(a vector field covering $\varphi$, i.e., $v_\varphi:M \rightarrow TM$ satisfies $v_\varphi(m) \in 
T_{\varphi(m)}M$ for every $m \in M$).

Recall that if
$\pi:Q \to S$ is a right principal $G$-bundle and $\mathfrak{g}$ is the Lie algebra of $G$, then  a \emph{right equivariant principal connection} on $Q$ is a $\mathfrak{g}$-valued one-form
$A \in \Omega^1(Q;\mathfrak{g})$, satisfying the following properties:
\begin{enumerate}
\item For each $\xi \in \mathfrak{g}$, we have
$A(\xi_Q) = \xi$, where $\xi_Q \in \mathfrak{X}(Q)$, defined
by $\xi_Q(q):=\left.\frac{d}{dt}\right|_{t=0}
q \cdot \exp(t \xi)$ for any $q \in Q$, is the infinitesimal generator of $\xi$.
\item For each $g \in G$ and $v \in TQ$ we have
$A(v \cdot g) = \Ad_g^{-1}(A(v))$.
\end{enumerate}

If and interpolation method $\mathcal{I}: T
\rightarrow \mathfrak{X}_{\rm div}(M)$ is given, then 
\[
A (v_\varphi) := T \varphi^{-1}\circ v_\varphi
 - \varphi^*\big(\inter( v_\varphi(\odot) \big), \quad 
 \varphi \in \SDiff(M), \quad v_\varphi\in 
 T_\varphi \SDiff(M),
\]
defines a right principal connection one-form $A \in \Omega^1(\SDiff(M); \mathfrak{g}_\odot)$, as a direct
verification shows. Since $\left\{ v_\varphi \in 
T_ \varphi\SDiff(M) \mid v_\varphi = 
\mathcal{I}(v_\varphi( \odot))\circ \varphi \right\}$ is
the horizontal space defined by $A$, it is easily seen that
the associated horizontal lift has the expression
\[
h^\uparrow_\varphi(\dot{x}) = \inter(\dot{x}) \circ \varphi, \quad x=\varphi(\odot), \quad \dot{x} \in T_xX.
\]

Conversely, given a right principal 
connection one-form $A \in\Omega^1\left(\SDiff(M); 
\mathfrak{g}_\odot\right)$, it is easy to see that it defines
an interpolation method $\mathcal{I}:TX \rightarrow \mathfrak{X}_{\rm div}(M)$ by
\[
\mathcal{I}(T_\varphi \pi(v_\varphi)): = 
v_\varphi \circ \varphi^{-1} - \varphi_* A(v_\varphi).
\]
In addition, the principal connection induced by this
$\mathcal{I}$, returns the original $A$, which proves
the stated bijective correspondence between interpolation
methods and principal connections. \quad $\lozenge$
}
\end{remark}

Return to the general case and introduce the notation
$(x,\xi_x,\dot{x},\dot{\xi}_x)\in T\tilde{\mathfrak{g}}_\odot$ 
for tangent vectors at $(x,\xi_x)$ to 
$\tilde{\mathfrak{g}}_\odot$. Define the 
\textit{horizontal space} at $(x,\xi_x)$ induced by $\mathcal{I}$ to be the vector 
subspace of $T_{(x,\xi_x)}\tilde{\mathfrak{g}}_\odot$ 
consisting of vectors of the form
\[
\left(x,\xi_x, \dot{x}, 
[\xi_x,\inter \left(\dot{x} \right)]_{\rm Jacobi-Lie}\right)
\in T\tilde{\mathfrak{g}}_\odot.
\]
Thus, the horizontal and vertical projections are
\begin{align}
\hor( x,\xi_x,\dot{x},\dot{\xi}_x) &= 
\left(x, \xi_x, \dot{x}, 
[\xi_x, \mathcal{I}(\dot{x})]_{\rm Jacobi-Lie}\right) \label{eq:hor_proj} \\
\ver( x, \xi_x,\dot{x},\dot{\xi}_x) &= 
\left(x, \xi_x , 0 , \dot{\xi}_x -
[\xi_x,\inter(\dot{x})]_{\rm Jacobi-Lie} \right) 
\label{eq:ver_proj}.
\end{align}
\begin{prop}\label{prop:cov_der}
The covariant derivative induced by the horizontal projection \eqref{eq:hor_proj} is the $t$-curve in 
$\tilde{\mathfrak{g}}_{\odot}$ given by
\begin{equation}
\label{cov_der_xi_I}
\frac{D (x,\xi_x) }{Dt} = \left(x,\frac{d\xi_x}{dt} -
\left[\xi_x,\inter \left(
  \frac{dx}{dt} \right) \right]_{\rm Jacobi-Lie}\right).
\end{equation}
\end{prop}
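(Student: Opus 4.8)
The plan is to unwind the definition of the covariant derivative \eqref{covariant_derivative} for the generalized connection specified by the vertical projector \eqref{eq:ver_proj}, so that the proof reduces to identifying the maps $v^\uparrow$ and $v_\downarrow$ on the bundle $\tilde{\mathfrak{g}}_\odot \to X$ and composing. First I would fix a curve $t \mapsto (x(t), \xi_x(t))$ in $\tilde{\mathfrak{g}}_\odot$ and compute its velocity in the natural tangent-bundle chart, namely $\frac{d}{dt}(x, \xi_x) = (x, \xi_x, \dot{x}, \dot{\xi}_x)$, where $\dot{\xi}_x = \frac{d\xi_x}{dt}$ is the ordinary $t$-derivative of the curve of vector fields $\xi_x(t)$ taken in the common ambient space $\mathfrak{X}_{\mathrm{div}}(M)$. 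Applying \eqref{eq:ver_proj} gives
\[
\ver\left(\frac{d}{dt}(x,\xi_x)\right) = \left(x, \xi_x, 0, \dot{\xi}_x - [\xi_x, \inter(\dot{x})]_{\rm Jacobi-Lie}\right).
\]

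Next I would compute $v_\downarrow$ for this bundle. Because each fiber $\mathfrak{g}_x$ is a linear subspace of the fixed vector space $\mathfrak{X}_{\mathrm{div}}(M)$, the vertical lift reads $v^\uparrow\big((x, \xi_x), (x, \eta_x)\big) = (x, \xi_x, 0, \eta_x)$, so its inverse followed by projection onto the second summand simply extracts the fiber component: $v_\downarrow(x, \xi_x, 0, \eta_x) = (x, \eta_x)$. Composing with the vertical projection above yields exactly \eqref{cov_der_xi_I}, upon writing $\dot{\xi}_x = \frac{d\xi_x}{dt}$ and $\dot{x} = \frac{dx}{dt}$.

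The only genuinely substantive point — and the step I would treat most carefully — is to confirm that the output $\dot{\xi}_x - [\xi_x, \inter(\dot{x})]_{\rm Jacobi-Lie}$ really is an element of the fiber $\mathfrak{g}_{x}$, i.e., a divergence-free vector field vanishing at the $N$ points $x_1, \ldots, x_N$, so that $\ver$ indeed maps into the vertical bundle and $v_\downarrow$ is applicable. This is where the defining property of an interpolation method, $\inter(\dot{x})(x_k) = \dot{x}_k$, enters. Evaluating the bracket at a particle location $x_k$ and using $\xi_x(x_k) = 0$ together with the local formula $[u,v]_{\rm Jacobi-Lie} = Dv\cdot u - Du\cdot v$ gives $[\xi_x, \inter(\dot{x})]_{\rm Jacobi-Lie}(x_k) = -D\xi_x(x_k)\cdot\dot{x}_k$; on the other hand, differentiating the constraint $\xi_x(t)(x_k(t)) = 0$ in $t$ gives $\dot{\xi}_x(x_k) = -D\xi_x(x_k)\cdot\dot{x}_k$, so the two terms cancel at each $x_k$ and the fiber membership holds. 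Divergence-freeness is immediate, since both $\dot{\xi}_x$ and the Jacobi-Lie bracket of divergence-free fields are divergence free. With this consistency established, the remainder is the routine composition indicated above, and no further estimates are needed.
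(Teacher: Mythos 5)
Your proof is correct and follows essentially the same route as the paper's: take the velocity of the curve, apply the vertical projector \eqref{eq:ver_proj}, and use $v_\downarrow$ to extract the fiber component, yielding \eqref{cov_der_xi_I}. The only difference is your final paragraph checking that $\dot{\xi}_x - [\xi_x,\inter(\dot{x})]_{\rm Jacobi-Lie}$ indeed vanishes at the particle locations and hence lies in $\mathfrak{g}_x$ --- a well-definedness verification the paper leaves implicit, and a worthwhile addition.
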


\begin{proof}
Let $v =\frac{d}{dt}(x,\xi_x)$, so that the vertical and
horizontal projections are $\ver(v) = v - \hor(v)$,
and $\hor(v) = \left(x,\xi_x, \dot{x},
[\xi_x,\inter( \dot{x})]_{\rm Jacobi-Lie}\right)$, 
respectively.
Applying Definition \ref{def:generalized_connection}, we conclude
\begin{align*}
\frac{D(x,\xi_x)}{Dt} = v_\downarrow \left( \ver\left( \frac{d}{dt}(x,\xi_x)
\right) \right) = \left(x, \frac{d \xi_x}{dt} -
\left[\xi_x,\inter \left(
\frac{dx}{dt} \right) \right]_{\rm Jacobi-Lie} \right),
\end{align*}
as stated.
\end{proof}
  
\begin{prop} \label{prop:isomorphism}
  Given an interpolation method, $\inter:TX \to 
  \mathfrak{X}_{\mathrm{div}}(M)$, the map
  $\Psi_{\inter}:(T\SDiff(M))/G_\odot \to TX \oplus \tilde{\mathfrak{g}}_\odot$
given by
\[
\Psi_{\inter}( [ v_\varphi ] ) = \left(v_\varphi(\odot),
v_\varphi \circ \varphi^{-1} - \inter( v_\varphi(\odot) )\right)
\]
is an isomorphism of vector bundles, where $v_\varphi \in 
T_\varphi \SDiff(M)$, and $[v_\varphi]$
is the $G_\odot$-equivalence class of $v_\varphi$. 
\end{prop}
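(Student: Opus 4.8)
The plan is to verify in turn that $\Psi_\inter$ descends to $G_\odot$-classes, that its image genuinely lies in $TX \oplus \tilde{\mathfrak{g}}_\odot$ while covering the identity on $X$, that it is fibrewise linear, and finally to exhibit an explicit inverse. Since $\pi_X:\SDiff(M)\to X$ is already known to be a right principal $G_\odot$-bundle, both source and target are vector bundles over $X$, so it suffices to produce a fibrewise linear bijection covering $\mathrm{id}_X$ (together with the routine smoothness discussed at the end).

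For well-definedness I would replace the representative $v_\varphi\in T_\varphi\SDiff(M)$ by $v_\varphi\circ\psi\in T_{\varphi\circ\psi}\SDiff(M)$ for $\psi\in G_\odot$, this being the tangent lift of the right action $\varphi\mapsto\varphi\circ\psi$. Because $\psi(\odot_k)=\odot_k$, the first slot is unchanged: $(v_\varphi\circ\psi)(\odot)=v_\varphi(\odot)$. For the second slot, $(v_\varphi\circ\psi)\circ(\varphi\circ\psi)^{-1}=v_\varphi\circ\varphi^{-1}$, while $\inter\big((v_\varphi\circ\psi)(\odot)\big)=\inter(v_\varphi(\odot))$, so the difference is unchanged as well. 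Hence $\Psi_\inter$ is constant on each class $[v_\varphi]$.

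Next I would check the target. With $x:=\varphi(\odot)$, the first component $v_\varphi(\odot)$ lies in $T_xX$. For the second, $v_\varphi\circ\varphi^{-1}\in\mathfrak{X}_{\mathrm{div}}(M)$ by the description $T_\varphi\SDiff(M)=\{u\circ\varphi\mid u\in\mathfrak{X}_{\mathrm{div}}(M)\}$, and $\inter(v_\varphi(\odot))\in\mathfrak{X}_{\mathrm{div}}(M)$ by definition of an interpolation method, so their difference is divergence free. The crucial point is that this difference vanishes at each $x_k=\varphi(\odot_k)$: evaluating gives $(v_\varphi\circ\varphi^{-1})(x_k)=v_\varphi(\odot_k)$, while the defining property of $\inter$ yields $\inter(v_\varphi(\odot))(x_k)=v_\varphi(\odot_k)$, so the two cancel. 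Thus the second component lies in $\mathfrak{g}_x$ and $\Psi_\inter([v_\varphi])\in(TX\oplus\tilde{\mathfrak{g}}_\odot)(x)$, i.e. $\Psi_\inter$ covers $\mathrm{id}_X$.

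Finally I would establish bijectivity by writing the inverse. Fibrewise linearity is immediate once a representative $\varphi$ over $x$ is fixed, since $v_\varphi\mapsto v_\varphi(\odot)$, $v_\varphi\mapsto v_\varphi\circ\varphi^{-1}$, and the one-form $\inter$ are each linear. For the inverse, given $\dot x\in T_xX$ and $(x,\xi_x)\in\tilde{\mathfrak{g}}_\odot$, I would choose any $\varphi$ with $\varphi(\odot)=x$ and set $\Psi_\inter^{-1}(\dot x,(x,\xi_x)):=[(\inter(\dot x)+\xi_x)\circ\varphi]$; one checks this class is independent of $\varphi$ (replacing $\varphi$ by $\varphi\circ\psi$ only post-composes by $\psi\in G_\odot$) and that it maps back correctly, using $\xi_x(x_k)=0$ together with the interpolation property once more. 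The main obstacle is not any single computation but rather the bookkeeping of the quotient combined with the functional-analytic point that $\Psi_\inter$ and $\Psi_\inter^{-1}$ are smooth maps of Fréchet vector bundles; this follows from smoothness of evaluation at the $\odot_k$, of right translation by $\varphi^{\pm 1}$, and of the one-form $\inter$, and is where the implicit regularity framework for $\SDiff(M)$ is invoked.
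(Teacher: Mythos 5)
Your proposal is correct and follows essentially the same route as the paper's proof: verify $G_\odot$-invariance of the formula, note that the interpolation property forces the second component to vanish at the particle locations (hence lies in $\mathfrak{g}_x$), and exhibit the explicit inverse $[(\inter(\dot{x})+\xi_x)\circ\varphi]$. You simply spell out details the paper leaves as ``direct verification'' (independence of the inverse from the choice of $\varphi$, fibrewise linearity, smoothness), so no substantive difference in approach.
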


\begin{proof}
  We first must show that $\Psi_\inter$ is well defined on
  $(T\SDiff(M))/G_\odot$.  Let $\psi \in G_\odot$; note that
  \begin{align*}
    \Psi_\inter( [ v_\varphi \circ \psi ]
    ) &= (v_\varphi( \psi(\odot) ) , (v_\varphi \circ \psi) \circ
    (\varphi \circ \psi)^{-1} - \inter( v_\varphi(\psi(\odot)) ) ) \\
    &= (v_\varphi(\odot) , v_\varphi \circ \varphi^{-1} -
    \inter(v_\varphi(\odot) ) ) \\
    &= \Psi_\inter( [v_\varphi ] ).
  \end{align*}
It follows immediately from Definition \ref{def_interpolation} that $(v_ \varphi \circ \varphi - \mathcal{I}(v_\varphi(\odot)))(\varphi(\odot)) = 0$ which shows that 
$\Psi_{\inter}:(T\SDiff(M))/G_\odot \to TX \oplus \tilde{\mathfrak{g}}_\odot$ is well-defined.

Additionally, it is easy to check that $\Psi_\inter$ has the
  inverse $\Psi_\inter^{-1}( x,\dot{x} ,\xi_x) = 
  [ u \circ \varphi ]$ where $u = \xi_x + \inter(\dot{x})$ and $\varphi \in \SDiff(M)$ is an arbitrary
  diffeomorphism such that $x = \varphi( \odot)$.
A direct verification shows that $\Psi_{\inter}:T\SDiff(M)/G_\odot \to TX \oplus \tilde{\mathfrak{g}}_\odot$ is a vector
bundle map. 
\end{proof}

The importance of Proposition \ref{prop:isomorphism} is that it will allow us to transport objects defined on 
$T\SDiff(M) / G_\odot$ to objects defined on $TX \oplus \tilde{\mathfrak{g}}_\odot$ (the latter space being more intuitive).  In particular, we can express the equations of motion for an ideal fluid on $TX \oplus \tilde{\mathfrak{g}}_\odot$.  Moreover, while the $TX$ component represents the motion of the particles, the $\tilde{\mathfrak{g}}_\odot$ component represents the residual, which results from using $\inter(\dot{x})$ as an estimate of the velocity field of the fluid on $M$.  These observations will inspire particle methods for fluids wherein the error analysis is ``built-in''.

\section{Lagrangian Reduction and the Equations of Motion}
\label{sec:reduced_system}
  In this section we will use Proposition \ref{prop:isomorphism} to express the equations of motion for a $G_\odot$-invariant Lagrangian system on $\SDiff(M)$ on the space $TX \oplus \tilde{\mathfrak{g}}_\odot$. Concretely, we
carry out the program in \cite{CeMaRa2001} for this example.
However, instead of just quoting the relevant abstract formulas and applying them to our case, we prefer to derive
them by hand because we focus on interpolation methods
as opposed to connections (see remark  \ref{rmk:connections}) and want to keep the present paper self-contained.

Before we do this, however, we would like to motivate this task by considering the example of an ideal fluid.  To begin, the kinetic energy of an ideal fluid flowing on $M$, denoted
  $L_{\Euler}:T\SDiff(M) \to \mathbb{R}$, is given by
\begin{align}
L_{\Euler}(\varphi, \dot{\varphi} ) = \frac{\rho}{2} \int_{M}{ \|
  \dot{\varphi}(m) \|^2 d \mathrm{vol}(m) }, \label{eq:KE}
\end{align}
where $\rho$ denotes the density of the fluid, assumed to be
constant.
It was shown in~\cite{Arnold1966} that $L_{\Euler}$ is $\SDiff(M)$-invariant and that the resulting Euler-Poincar\'{e} (a.k.a. Euler-Arnold) equations are precisely Euler's equations for an ideal, inviscid, homogeneous, incompressible fluid
  \begin{align*}
    \pder{u}{t} + \nabla_u u = -\frac{\nabla p}{\rho}\,, \quad
\operatorname{div}u = 0, \quad u(m) \in T_m (\partial M) \;\;
\text{if} \;\; m \in \partial M,
  \end{align*}
for any oriented Riemannian manifold $M$ with smooth boundary
$\partial M$.
This result can be proven from a number of perspectives; we will focus here on the variational method.  The equations of motion on $T\SDiff(M)$ are the Euler-Lagrange equations
\[
\frac{D}{Dt} \left( \pder{L_{\Euler}}{\dot{\varphi}} \right) - \pder{L_{\Euler}}{\varphi} = 0.
\]
The integral curves of the Euler-Lagrange equations are known to extremize the action integral
\[
 S[ \varphi_t ] = \int_0^1 L_{\Euler}(\varphi_t, \dot{\varphi}_t) dt
\]

with respect to variations with fixed end-points at $t=0$ and $t=1$.  Conversely, curves which extremize $S$ must satisfy the Euler-Lagrange equations.  This variational principle is known as \emph{Hamilton's principle}, and is a central mechanism for deriving equations of motion for conservative mechanical systems~\cite[\S3]{FOM}.  In particular, one can derive reduced equations of motion on $\mathfrak{X}_{\mathrm{div}}(M) \equiv (T\SDiff(M) )/ \SDiff(M)$ by reducing Hamilton's principle.  This means that we must understand how a variation of a curve $\varphi_t \in \SDiff(M)$ induces variations of $u_t = \dot{\varphi}_t \circ \varphi_t^{-1} \in \mathfrak{X}_{\mathrm{div}}(M)$.  Understanding the relationship between variations of $\varphi_t$ and $u$ allows one to obtain a variational principle on $\mathfrak{X}_{\mathrm{div}}(M)$ which is equivalent to Hamilton's principle (up to a phase).  Moreover, this variational principle on $\mathfrak{X}_{\mathrm{div}}(M)$ induces equations of motion which are equivalent to Euler's equations for an ideal incompressible homogenous fluid.   In other words, Euler's fluid equations are the evolution equations obtained by reducing Euler-Lagrange equations on $T\SDiff(M)$ by a (right) $\SDiff(M)$ symmetry \cite{Arnold1966} (see also \cite[\S5.5]{FOM}, \cite{ArKh1992}, and \cite{HoScSt2009}).

In this section we shall do a reduction by $G_\odot \subset \SDiff(M)$ for an arbitrary $G_\odot$-invariant Lagrangian $L:T \SDiff(M) \to \mathbb{R}$ (such as $L_{\Euler}$) to derive an evolution equation on $TX \oplus \tilde{\mathfrak{g}}_\odot$.
  Of course, the resulting equations of motion yield the same dynamics as Euler's fluid equations when we choose the Lagrangian $L_{\Euler}$.
  However, despite producing the same dynamics, the use of the interpolation method heavily influences how one writes down the equations on $TX \oplus \tilde{\mathfrak{g}}_\odot$.
  As in the case of reduction by $\SDiff(M)$, we must first study how variations of curves in $\SDiff(M)$ lead to variations of curves in $TX \oplus \tilde{\mathfrak{g}}_\odot$.  This will allow us to define a variational principle on $TX \oplus \tilde{\mathfrak{g}}_\odot$ which yields the correct equations of motion.

\subsection{Covariant Variations}

Let $\varphi_t$ be a curve in $\SDiff(M)$.  A deformation of 
$\varphi_t$ is a two parameter family of diffeomorphisms, $\varphi_{\lambda , t}$, 
such that $\varphi_{0,t} = \varphi_t$.
We desire to measure how much the variation 
$\delta\varphi_t := \restr{ \pder{\varphi_{t,\lambda}}{\lambda} }{ \lambda =0}$ (a $t$-curve in $T\SDiff(M)$
covering $t \mapsto \varphi_t$) induces a variation in the quantity $(x,\xi_{x})_t :=
 (x(t),\dot{\varphi}_t \circ \varphi^{-1}_t - \inter(\dot{x}(t))) \in
\tilde{\mathfrak{g}}_\odot$ where $x(t) = \varphi_t(\odot)$ and
$\dot{\varphi}_t = \frac{d \varphi_t}{dt}$.  To do this, we invoke the covariant
derivative induced by $\inter$ in Proposition \ref{prop:cov_der} to define the \emph{covariant variation}
of a curve $(x,\xi_x)_t \in \tilde{\mathfrak{g}}_\odot$ with respect to a deformation
$(x,\xi_x)_{t, \lambda}$ as the quantity 
\begin{equation}
\label{def_covariant_variation}
\delta^{\inter}(x,\xi_x)_t := 
\restr{\frac{D(x, \xi_x)_{t, \lambda} }{ D \lambda }}{\lambda=0} \in \tilde{\mathfrak{g}}_\odot (x(t)).
\end{equation}
While this produces arbitrary variation 
of curves in $\tilde{\mathfrak{g}}_\odot$, we will only be
concerned with variations induced by variations of curves in
$\SDiff(M)$.  The following propositions describe the form of such variations.

\begin{prop}\label{prop:ver_var}
Let $\varphi_t$ be a curve in $\SDiff(M)$.  Then, a vertical
deformation of $\varphi_t$ is necessarily given by 
$\varphi_{t,\lambda} = \varphi_t \circ \psi_{t,\lambda}$ 
for a 2-parameter family $\psi_{t,\lambda} \in
G_\odot$ satisfying $\psi_{t,0} = id$. Set $x(t,\lambda):= 
\varphi_{t,\lambda}(\odot)$ and
\[
\xi_x(t,\lambda) : = \pder{\varphi_{t,\lambda} }{t} \circ 
\varphi_{t,\lambda}^{-1} - \mathcal{I}\left(\pder{x}{t}(t,
\lambda) \right),
\]
so that we have a 2-parameter family $(x,\xi_x)_{t,\lambda} 
\in \tilde{\mathfrak{g}}_\odot$. The covariant
variation of $(x,\xi_x)_{t,\lambda}$ with respect to 
$\lambda$ {\rm (}and with $t$-argument suppressed{\rm )} is
\[
\delta^{\mathcal{I}} (x,\xi_x) = \frac{D (x,\eta_x) }{Dt} - 
[(x,\xi_x),(x, \eta_x) ]
= \frac{D (x,\eta_x) }{Dt} + 
\left(x,[\xi_x,\eta_x ]_{\rm Jacobi-Lie}\right),
\]
where $\eta_x := (\varphi_t)_*\left(\restr{\pder{
    \psi_{t,\lambda} }{\lambda } }{\lambda=0} \right)\in \left(\tilde{\mathfrak{g}}_\odot\right)_x$. 
\end{prop}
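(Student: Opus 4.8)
The plan is to reduce the whole statement to two ingredients: the covariant-derivative formula of Proposition \ref{prop:cov_der}, applied in the $\lambda$-direction, and a single cross-differentiation identity for the Eulerian velocity. First I would pin down the structure of a vertical deformation. Verticality means that the projected curve $x(t,\lambda) = \varphi_{t,\lambda}(\odot)$ does not move in $\lambda$, i.e. $\pder{x}{\lambda}\equiv 0$, so $x(t,\lambda)=x(t)$. Setting $\psi_{t,\lambda}:=\varphi_t^{-1}\circ\varphi_{t,\lambda}$ then gives $\psi_{t,\lambda}(\odot_k)=\varphi_t^{-1}(\varphi_{t,\lambda}(\odot_k))=\varphi_t^{-1}(\varphi_t(\odot_k))=\odot_k$, using $\varphi_{t,\lambda}(\odot_k)=\varphi_t(\odot_k)$, so $\psi_{t,\lambda}\in G_\odot$, while $\psi_{t,0}=\mathrm{id}$ since $\varphi_{t,0}=\varphi_t$; this is exactly the asserted form $\varphi_{t,\lambda}=\varphi_t\circ\psi_{t,\lambda}$. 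In particular $\Psi_t:=\restr{\pder{\psi_{t,\lambda}}{\lambda}}{\lambda=0}\in\mathfrak{g}_\odot$ (divergence free and vanishing at $\odot$, being tangent to $G_\odot$ at the identity), so its pushforward $\eta_x=(\varphi_t)_*\Psi_t$ indeed lies in $(\tilde{\mathfrak{g}}_\odot)_x$.

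Next I would apply Proposition \ref{prop:cov_der} in the $\lambda$-direction to the curve $\lambda\mapsto(x,\xi_x)_{t,\lambda}$. Because $\pder{x}{\lambda}=0$ and $\inter$ is linear with $\inter(0)=0$, the connection term drops out and the covariant variation collapses to $\delta^{\inter}(x,\xi_x)=\bigl(x,\restr{\pder{\xi_x}{\lambda}}{\lambda=0}\bigr)$. Moreover, since $x(t,\lambda)=x(t)$ is $\lambda$-independent, so is $\inter(\pder{x}{t})$; hence in $\xi_x=\dot\varphi_{t,\lambda}\circ\varphi_{t,\lambda}^{-1}-\inter(\pder{x}{t})$ only the Eulerian velocity $u_{t,\lambda}:=\dot\varphi_{t,\lambda}\circ\varphi_{t,\lambda}^{-1}$ contributes to the $\lambda$-derivative, giving $\restr{\pder{\xi_x}{\lambda}}{\lambda=0}=\restr{\pder{u_{t,\lambda}}{\lambda}}{\lambda=0}$.

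The heart of the argument is the cross-differentiation identity for the two right-trivialized velocities $u_{t,\lambda}=\dot\varphi_{t,\lambda}\circ\varphi_{t,\lambda}^{-1}$ and $w_{t,\lambda}:=\bigl(\pder{\varphi_{t,\lambda}}{\lambda}\bigr)\circ\varphi_{t,\lambda}^{-1}$. Equating the mixed material partials $\pder{}{\lambda}\pder{\varphi}{t}=\pder{}{t}\pder{\varphi}{\lambda}$ and converting both sides to spatial representation (the chain rule produces the convective corrections) yields $\pder{u}{\lambda}-\pder{w}{t}=[u,w]_{\rm Jacobi\text{-}Lie}$, where the sign is fixed by the convention $[u,v]_{\rm Jacobi\text{-}Lie}=Dv\cdot u-Du\cdot v$ recorded in \S\ref{sec:diffeomorphism}. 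This is the one place where sign bookkeeping is the real danger, and I would verify it in a coordinate chart (the coordinate-free version being the usual zero-curvature relation for a right action, so that the bracket appears with the $-[\cdot,\cdot]_{\rm Jacobi\text{-}Lie}$ left Lie-algebra structure of $\SDiff(M)$ entering with the correct sign).

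Finally I would assemble the pieces. Evaluating the identity at $\lambda=0$ and using $w_{t,0}=(\varphi_t)_*\Psi_t=\eta_x$ together with $u_{t,0}=\xi_x+\inter(\dot x)$, I split the bracket as $[u_{t,0},\eta_x]_{\rm Jacobi\text{-}Lie}=[\xi_x,\eta_x]_{\rm Jacobi\text{-}Lie}+[\inter(\dot x),\eta_x]_{\rm Jacobi\text{-}Lie}$, so that $\restr{\pder{\xi_x}{\lambda}}{\lambda=0}=\frac{d\eta_x}{dt}+[\inter(\dot x),\eta_x]_{\rm Jacobi\text{-}Lie}+[\xi_x,\eta_x]_{\rm Jacobi\text{-}Lie}$. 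Here I recognize $\frac{d\eta_x}{dt}+[\inter(\dot x),\eta_x]_{\rm Jacobi\text{-}Lie}$ as precisely the fiber component of $\frac{D(x,\eta_x)}{Dt}$ coming from Proposition \ref{prop:cov_der} (its connection term is $-[\eta_x,\inter(\dot x)]_{\rm Jacobi\text{-}Lie}=[\inter(\dot x),\eta_x]_{\rm Jacobi\text{-}Lie}$). Packaging with the base point then gives $\delta^{\inter}(x,\xi_x)=\frac{D(x,\eta_x)}{Dt}+(x,[\xi_x,\eta_x]_{\rm Jacobi\text{-}Lie})$, and rewriting the last term through the adjoint-bundle bracket \eqref{adjoint_bundle_bracket} as $-[(x,\xi_x),(x,\eta_x)]$ yields the stated identity.
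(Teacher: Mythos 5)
Your proposal is correct and follows essentially the same route as the paper's proof: apply Proposition \ref{prop:cov_der} in the $\lambda$-direction (the connection term dropping since $\delta x = 0$), compute $\delta u$ via the Lin constraint \eqref{lin}, substitute $u = \xi_x + \inter(\dot{x})$, and repackage using the adjoint-bundle bracket \eqref{adjoint_bundle_bracket}, with all signs consistent with the paper's conventions. The only differences are cosmetic: you additionally verify the structural claim that vertical deformations have the form $\varphi_t \circ \psi_{t,\lambda}$ (which the paper's proof leaves implicit), and you rederive the cross-differentiation identity by equating mixed partials rather than citing Appendix \ref{app:Lin}.
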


\begin{proof}
If we let $\delta = \left. \pder{}{\lambda} \right|_{\lambda = 0 }$ then we can see that $\delta x = 0$ since $x = \varphi_t( \psi_{t,\lambda}( \odot) ) = \varphi_t(\odot)$ does not depend on $\lambda$.  Moreover, by Proposition \ref{prop:cov_der} we find 
	\begin{align*}
		\delta^{\mathcal{I}}( x, \xi_x) = \left( x , \delta \xi_x \right).
	\end{align*}
	
Next we calculate $\delta \xi_x$.  To do this, define the spatial velocity field $u = \dot{\varphi} \circ \varphi^{-1}$ so that $\xi_x = u - \mathcal{I}( \dot{x})$.   Then we find
\[
\delta \xi_x = \delta u - \delta ( \mathcal{I}(\dot{x})).
\]
However, as $x(t)$ does not depend on $\lambda$, we have 
$\delta ( \mathcal{I}( \dot{x}) ) = 0$.  Thus, by the Lin 
constraints derived in \ref{app:Lin} and the identity $\delta \varphi \circ \varphi^{-1} = \eta_x$, we have
\begin{align*}
\delta \xi_x &= \delta u \stackrel{\eqref{lin}}= 
\pder{ \eta_x }{t} - [ \eta_x , u ]_{\rm Jacobi-Lie} \\
&= \pder{\eta_x}{t} - 
[\eta_x, \mathcal{I}(\dot{x})]_{\rm Jacobi-Lie} - 
[\eta_x , \xi_x]_{\rm Jacobi-Lie}\,.
\end{align*}
Therefore,
\begin{align*}
\delta^ \mathcal{I}(x, \xi_x) &= (x, \delta\xi_x)
= \left(x, \pder{\eta_x}{t} - 
[\eta_x, \mathcal{I}(\dot{x})]_{\rm Jacobi-Lie} - 
[\eta_x , \xi_x]_{\rm Jacobi-Lie}\right)\\
& = \left(x, \pder{\eta_x}{t} - 
[\eta_x, \mathcal{I}(\dot{x})]_{\rm Jacobi-Lie}\right)
- \left(x,[\eta_x , \xi_x]_{\rm Jacobi-Lie}\right)\\
& = \frac{D}{ D t}(x, \eta_x) + \left[(x, \eta_x), 
( x, \xi_x) \right]
\end{align*}
by Proposition \ref{prop:cov_der} and \eqref{adjoint_bundle_bracket}.
\end{proof}
    
While the form of covariant variations induced by vertical variations is now clear, we must also consider how
$(x,\xi_x) \!= (x, \dot{\varphi}\circ  \varphi^{-1} \!-\! \inter(\dot{x})) \in \tilde{\mathfrak{g}}_\odot$ varies in response to variations of
$x = \varphi_t(\odot) \!\in\! X$.  Given a curve $\varphi_t \!\in\! \SDiff(M)$ we may take a deformation of
the curve $x(t) \!=\! \varphi_t(\odot) \!\in\! X$ given by $x(t,\lambda)$. Define $\delta x(t, \lambda): = 
\frac{\partial}{\partial\lambda} x(t, \lambda)\in T_{x(t, \lambda)}X$. Thus, $\mathcal{I}(\delta x(t, \lambda))\in 
\mathfrak{X}_{\rm div}(M)$. For each fixed $t$, $\lambda
\mapsto \mathcal{I}(\delta x(t, \lambda))$ is a $\lambda$-dependent family of divergence free vector fields on $M$.
Denote by ${\rm Fl}_\lambda^{\mathcal{I}(\delta x(t, \lambda))}$ the evolution operator of this $\lambda$-dependent
vector field which at $\lambda=0$ is the identity diffeomorphism on $M$. Define the horizontal deformation 
of $\varphi_t$ by
\begin{equation}
\label{eq:hor_var}
\varphi_{t,\lambda} := {\rm Fl}_\lambda^{\mathcal{I}(\delta x(t, \lambda))} \circ \varphi_t \in \SDiff(M).
\end{equation}
Therefore
\begin{equation}
\label{hor_phi_t}
\delta \varphi_t: = 
\left.\frac{\partial}{\partial\lambda}\right|_{\lambda=0}
\varphi_{t,\lambda} = \mathcal{I}(\delta x(t,0))\circ \varphi_t.
\end{equation}
In addition, by Definition \ref{def_interpolation}, 
for fixed $t$, $\lambda
\mapsto x_k(t, \lambda)$ is an integral curve of  
$\mathcal{I}(\delta x(t, \lambda))$ which at $\lambda=0$ 
passes through $x_k(t)$ and thus, by uniqueness of integral
curves, we have
\[
x_k(t, \lambda) = {\rm Fl}_\lambda^{\mathcal{I}(\delta x(t, \lambda))} (x_k(t)).
\]

Recall from Definition \ref{def_interpolation}, that an 
interpolation method is an element 
$\mathcal{I} \in\Omega ^1(X; \mathfrak{X}_{\rm div}(M))$, such
that $\mathcal{I}(\dot{x})(x) = \dot{x}$, for all 
$\dot{x} \in T_xX$. We want to define the exterior derivative
$d \mathcal{I}$ in a way which is consistent with the exterior
derivative on $\Omega^1(X)$.  We do this by considering the ordinary
one form $[\alpha] \circ \mathcal{I} \in \Omega^1(M)$ obtained for an element
of the dual space $[\alpha] \in \mathfrak{X}_{\mathrm{div}}(M)^\ast$.
At this point it is useful to recall what the (smooth) dual space of $\mathfrak{X}_{\rm div}(M)$ is.
 We recall (see \cite{MaWe1983}) that 
$\mathfrak{X}_{\mathrm{div}}(M)^\ast$ can be identified
with $\Omega^1(M)/d C^{\infty}(M)$ via the weakly 
non-degenerate pairing
\[
\left\langle [ \alpha], X \right\rangle: = \int_M \alpha(X)
\mu,
\]
where $\mu \in\Omega^n(M)$ is the the Riemannian volume form on $M$ (recall that $(M,g)$ is an oriented Riemannian 
manifold, possibly with boundary, for which the Hodge decomposition theorem holds). This follows from the Hodge decomposition theorem for one-forms. 

So, for any element
$[\alpha]\in \Omega^1(M)/dC^{\infty}(M) \cong
\mathfrak{X}_{\mathrm{div}}(M)^\ast$, the composition 
$[\alpha] \circ \mathcal{I}: TX \rightarrow \mathbb{R}$ is 
a usual one-form on $X$, given by
\[
\left([\alpha] \circ \mathcal{I}\right)(\delta x) : = 
\int_M \alpha\left(\mathcal{I}(\delta x)\right) \mu, 
\quad \text{for all} \quad \delta x \in T_xX.
\]
We define the exterior derivative $d \mathcal{I}$ of 
$\mathcal{I}$, to be the unique 
$\mathfrak{X}_{\mathrm{div}}(M)$-valued two-form on $X$ 
such that $[\alpha] \circ d \mathcal{I} = 
d([\alpha] \circ \mathcal{I})$ for any $\alpha \in 
\Omega^1(M)$.

Next, we turn to the definition of the covariant derivative
$\frac{\partial \mathcal{I}}{\partial x}$ of $\mathcal{I}$ relative to the base point in $X$.  
By Defintion \ref{def:partial_cov}, $\frac{\partial([\alpha] 
\circ \mathcal{I})}{\partial x}:TX\rightarrow T ^\ast X$ is
a fiber bundle map covering he identity. In complete analogy
with the definition of $d\mathcal{I}$, we define the map
$\pder{\mathcal{I}}{x}: TX \rightarrow T^\ast X \otimes 
\mathfrak{X}_{\mathrm{div}}(M)$ by
\[
[\alpha]\left(\left\langle \pder{\mathcal{I}}{x}(\dot{x}), 
\delta x \right\rangle \right) = \left \langle 
\pder{([\alpha] \circ \mathcal{I})}{x}(\dot{x}) , \delta x 
\right \rangle, \quad \text{for all} \quad \dot{x}, \delta x
\in T_xX.
\]
This is the standard definition of the covariant derivative
of a vector valued one-form on a Riemannian manifold (see, 
e.g., \cite[Definition 3.3.3]{CeMaRa2001}). 

More explicitly, we can locally write $\mathcal{I} = 
\varepsilon^i\otimes v_i $ for a finite number of local
vector field valued functions $v_1,\ldots,v_n\in C^{\infty}(X; 
\mathfrak{X}_{\mathrm{div}}(M))$ and a local basis of 
one-forms $\varepsilon^1, \dots, \varepsilon^{n} \in 
\Omega^1(X)$, where $n:= \dim X$. Then, a direct computation
using Definition \ref{def:partial_cov} and 
\eqref{eq:hor_lift_levi_civita} shows that
\[
\frac{\partial \mathcal{I}}{\partial x}\left(\dot{x}\right)
=\left(\varepsilon^i(x)\left(\dot{x}\right)\right) dv_i(x) + 
 \frac{\partial \varepsilon^i}{\partial x} \left(\dot{x}\right) \otimes v_i (x) \in T ^\ast_x X \otimes 
 \mathfrak{X}_{\rm div} (M),
\]
where $\frac{\partial \varepsilon^i}{\partial x}$ is the covariant derivative of $\epsilon^i$ in Definition 
\ref{def:partial_cov} (viewing $\varepsilon^i$ as a smooth
function on $TX$) and $dv_i$ is the exterior derivative
of $v_i \in C ^{\infty}(X;\mathfrak{X}_{\rm div} (M))$.

While $\frac{\partial \mathcal{I}}{ \partial x}$  provides a 
measure of how much the value of $\mathcal{I}$ varies as we 
change the base point of an element of $TX$, we would also 
like to describe how $\mathcal{I}$ varies with respect to 
changes in the velocity vector of $TX$ while holding the base 
point fixed, i.e., we need the fiber derivative of 
$\mathcal{I}$ (see \eqref{fiber_derivative}). Since 
$\mathcal{I}$ is linear in the velocity variable (by 
Definition \ref{def_interpolation}), it follows that the 
fiber derivative of $\mathcal{I}$ is $\mathcal{I}$ itself.

Putting  all these remarks together, it follows that along a 
smooth curve $\lambda \mapsto \dot{x}_\lambda \in TX$ 
covering the curve $\lambda \mapsto x_\lambda \in X$ with 
$\dot{x}_\lambda|_{\lambda=0} = \dot{x}$ and 
$\left.\frac{d}{d\lambda}\right|_{\lambda=0}x_{\lambda} = 
\delta x$, by equation \eqref{eq:df=dmf+def}, we have
\begin{equation}
\label{dI_derivative}
\left.\frac{d}{d\lambda}\right|_{\lambda=0}\mathcal{I}(\dot{x}) = 
\left\langle \pder{ \inter }{x}(\dot{x}) ,  \delta x \right\rangle  +  \inter \left( \left. \frac{D \dot{x} }{D\lambda} \right|_{\lambda = 0} \right) \in \mathfrak{X}_{\rm div}(M).
\end{equation}
This formula allows us to express horizontal variations in a particularly simple form.

\begin{prop}\label{prop:hor_var}
Let $\varphi_t$ be a curve in $\SDiff(M)$ and $x(t) =
\varphi_t(\odot)$.  Let $x_{t,\lambda}$ be a deformation of 
$x_t$ and  $\varphi_{t,\lambda}$ the resulting horizontal
deformation given by equation \eqref{eq:hor_var}.
Then the covariant variation of $(x,\xi_x)$, where 
$\xi_x := \dot{\varphi}\circ \varphi^{-1} - \inter(\dot{x})$, 
is given by
\[
\delta^{\inter} (x,\xi_x) = \widetilde{B}(\dot{x}, \delta x),
\]
where $\widetilde{B}$ is the 
$\tilde{\mathfrak{g}}_\odot$-valued two-form on $X$ given 
by the expression
\begin{align}
\widetilde{B}(\dot{x},\delta x) = 
(x, d\inter(\dot{x},\delta x) + 
[\inter(\dot{x}) , \inter(\delta x)]_{ \rm Jacobi-Lie}). 
\label{eq:curvature_tensor}
\end{align}
\end{prop}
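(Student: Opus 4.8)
The plan is to reduce the whole statement to computing a single fiber component and then to chase the torsion-free bookkeeping. Write $\delta = \left.\partial_\lambda\right|_{\lambda=0}$, $\dot x = \partial_t x$, $\delta x = \partial_\lambda x|_{\lambda=0}$, and set $u = \dot\varphi\circ\varphi^{-1}$ so that $\xi_x = u - \inter(\dot x)$. First I would apply Proposition \ref{prop:cov_der} in the $\lambda$-direction, which immediately gives $\delta^{\inter}(x,\xi_x) = \left(x,\ \delta\xi_x - [\xi_x,\inter(\delta x)]_{\rm Jacobi-Lie}\right)$. Thus the entire computation comes down to evaluating $\delta\xi_x = \delta u - \delta(\inter(\dot x))$, for which the two ingredients are the variation of the spatial velocity $u$ and the variation of $\inter(\dot x)$.

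For $\delta u$, the key is that the deformation \eqref{eq:hor_var} is \emph{horizontal}, so by \eqref{hor_phi_t} its generator is exactly $\eta := \delta\varphi\circ\varphi^{-1} = \inter(\delta x)$; feeding this into the Lin constraint \eqref{lin} yields $\delta u = \partial_t\eta - [\eta,u]_{\rm Jacobi-Lie}$. That $\eta$ equals $\inter(\delta x)$ on the nose (rather than merely agreeing with it modulo $\mathfrak{g}_x$, as happens in the vertical case of Proposition \ref{prop:ver_var}) is precisely what makes the horizontal case collapse to a curvature term. I would then expand both $\partial_t\eta = \partial_t(\inter(\delta x))$ and $\delta(\inter(\dot x))$ using the fiber/base splitting \eqref{dI_derivative}, applied respectively in the $t$- and $\lambda$-directions: each produces a $\partial\inter/\partial x$ term paired against the transverse velocity, plus an $\inter$ of a covariant derivative, namely $\inter\!\left(\tfrac{D\delta x}{Dt}\right)$ and $\inter\!\left(\tfrac{D\dot x}{D\lambda}\right)$.

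The crucial simplification is that these last two terms coincide: because $X\subset M^N$ carries the product Levi-Civita connection, which is torsion free, and $[\partial_t,\partial_\lambda]=0$, symmetry of mixed covariant derivatives gives $\tfrac{D\dot x}{D\lambda} = \tfrac{D\delta x}{Dt}$, so the two $\inter(\cdot)$ contributions cancel inside $\delta\xi_x$. What survives is the antisymmetric combination of $\partial\inter/\partial x$ terms, which by Proposition \ref{prop:da(v,w)=da(w)v-da(v)w} (extended to the $\mathfrak{X}_{\rm div}(M)$-valued one-form $\inter$ as in Remark \ref{rmk:vector_valued_forms}) is exactly $d\inter(\dot x,\delta x)$, together with the leftover $-[\eta,u]_{\rm Jacobi-Lie}$. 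Substituting $\eta = \inter(\delta x)$ and $u = \xi_x + \inter(\dot x)$ expands $-[\eta,u]$ into $[\xi_x,\inter(\delta x)]_{\rm Jacobi-Lie} + [\inter(\dot x),\inter(\delta x)]_{\rm Jacobi-Lie}$ after using antisymmetry of the bracket; combining this with the $-[\xi_x,\inter(\delta x)]_{\rm Jacobi-Lie}$ term from the first step, the two $\xi_x$-dependent brackets cancel, leaving precisely $d\inter(\dot x,\delta x) + [\inter(\dot x),\inter(\delta x)]_{\rm Jacobi-Lie}$, i.e.\ the fiber component of $\widetilde B(\dot x,\delta x)$ in \eqref{eq:curvature_tensor}.

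I expect the main obstacle to be justifying the two invocations of \eqref{dI_derivative} cleanly — in particular that $\partial_t(\inter(\delta x))$ may be expanded by the same base/fiber splitting used for the $\lambda$-derivative of $\inter(\dot x)$ — and the verification that the connection on the product manifold $X$ genuinely permits commuting the $t$- and $\lambda$-covariant derivatives of the map $(t,\lambda)\mapsto x(t,\lambda)$. Once those two points are secured, everything else is sign bookkeeping in the Jacobi-Lie bracket.
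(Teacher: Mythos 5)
Your proposal is correct and follows essentially the same route as the paper's proof: Lin constraints with $\delta\varphi\circ\varphi^{-1}=\inter(\delta x)$, the base/fiber splitting \eqref{dI_derivative} in both the $t$- and $\lambda$-directions, the torsion-free symmetry $\tfrac{D\dot x}{D\lambda}=\tfrac{D\delta x}{Dt}$ killing the $\inter(\cdot)$ terms, and Proposition \ref{prop:da(v,w)=da(w)v-da(v)w} identifying the antisymmetric remainder with $d\inter(\dot x,\delta x)$. The only difference is cosmetic — you apply Proposition \ref{prop:cov_der} at the outset rather than at the end, which makes the cancellation of the $\xi_x$-brackets visible slightly earlier but changes nothing in substance.
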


Before we begin the proof we would like to point out that 
$\widetilde{B}$ is the \textit{reduced curvature tensor} of 
principal connection associated to $\mathcal{I}$ in remark
\ref{rmk:connections}.  This relationship bridges  the reductions 
being performed in this paper with those performed in 
\cite{CeMaRa2001}.

\begin{proof}
	Define the spatial velocity $u := \dot{\varphi} \circ \varphi^{-1} \in \mathfrak{X}_{\rm div}(M)$.  Then we see that $\xi_x = u - \mathcal{I}(\dot{x})$. Let us now calculate the derivative of $\xi_x$ with respect to $\lambda$ at $\lambda = 0$.  We find
	\[
		\delta \xi_x = \delta u - \left. \frac{ \partial }{\partial \lambda} \right|_{\lambda = 0} ( \mathcal{I}(\dot{x}))
	\]
	Upon noting that $\delta \varphi \circ \varphi^{-1} = \mathcal{I}(\delta x)$ we see by the Lin constraints derived in \ref{app:Lin} (see \eqref{lin}) that
	\[
		\delta u = \frac{\partial}{\partial t} ( \mathcal{I}(\delta x) ) - [ \mathcal{I}( \delta x) , u ]_{\rm Jacobi-Lie}
	\]
	By \eqref{dI_derivative} we find
	\begin{align*}
		\left. \frac{ \partial }{\partial \lambda} \right|_{\lambda = 0} ( \mathcal{I}(\dot{x})) &= \left \lb \pder{\mathcal{I}}{x}(\dot{x}) , \delta x \right \rb + \mathcal{I} \left( \frac{D\dot{x}}{D \lambda} \right) \\
		 \frac{\partial}{\partial t} ( \mathcal{I}(\delta x) ) &= \left \lb \pder{\mathcal{I}}{x}(\delta x) , \dot{x} \right \rb + \mathcal{I} \left( \frac{D\delta x}{D t} \right).
	\end{align*}
	We may now express $\delta \xi_x$ as
	\begin{align*}
		\delta \xi_x =& \underbrace{ \left \lb \pder{\mathcal{I}}{x}(\delta x) , \dot{x} \right \rb - \left \lb \pder{\mathcal{I}}{x}(\dot{x}) , \delta x \right \rb }_{ T_1} + \underbrace{ \mathcal{I} \left( \frac{D\delta x}{D t} \right) - \mathcal{I} \left( \left. \frac{D \dot{x}}{ D\lambda} \right|_{\lambda = 0} \right) }_{T_2} \\
		& - [ \mathcal{I}( \delta x) , u ]_{\rm Jacobi-Lie}
	\end{align*}
	By \eqref{eq:da} $T_1 = d \mathcal{I}( \dot{x} , \delta x)$.  Moreover, by the torsion free property of the Levi-Cevita connection we find $\left. \frac{D \dot{x}}{ D \lambda } \right|_{\lambda = 0} = \frac{D \delta x}{Dt}$.  This yields $T_2 = 0$.  Finally substituting $u = \xi_x + \mathcal{I}(\dot{x})$ we can express $\delta \xi_x$ by
	\[
		\delta \xi_x = d \mathcal{I}( \dot{x} , \delta x) - [\mathcal{I}(\delta x) , \mathcal{I}(\dot{x}) ]_{\rm Jacobi-Lie} - [ \mathcal{I}( \delta x) , \xi_x ]_{\rm Jacobi-Lie}
	\]
	Now we apply Proposition \ref{prop:cov_der}.
\end{proof}

The reduced curvature tensor $\widetilde{B}$ measures the 
non-integrability of the distribution induced by $\mathcal{I}$.

Hamilton's principle considers variations of a curve 
$\varphi_t \in \SDiff(M)$. These variations of $\varphi_t$ 
induce a restricted class of variations of $(x,\xi_x)$.
As a result of Propositions \ref{prop:ver_var} and 
\ref{prop:hor_var}, we see that the most general covariant 
variations of $(x, \xi_x) \in \tilde{\mathfrak{g}}_\odot$ that 
would appear in a reduction of Hamilton's principle, are of 
the form
\begin{equation}
\label{good_variations}
\delta^\inter(x,\xi_x) = \frac{D(x,\eta_x)}{D t} +
[ (x,\eta_x),(x,\xi_x) ] + \widetilde{B}(\dot{x},\delta x)
\end{equation}
for some curve $(x,\eta_x) \in \tilde{\mathfrak{g}}_\odot$ and 
a variation $\delta x(t)$ of the curve $x(t)$.  In the next 
section we will state this reduced variational principle 
explicitly.

\subsection{Lagrange-Poincar\'{e} Reduction}
In this section, we state the Lagrange-Poincar\'{e} reduction
theorem (see~\cite{CeMaRa2001}) in terms of interpolation
methods rather than principal connections. The
resulting equations of motion are related to the 
Euler-Lagrange equations through the isomorphism, $\Psi_{\mathcal{I}} : (T \SDiff(M) ) / G_{\odot} \to TX \oplus \tilde{\mathfrak{g}}_\odot$, of 
Proposition \ref{prop:isomorphism}.
Note that if a Lagrangian $L:T\SDiff(M) \to \mathbb{R}$ is 
$G_\odot$-invariant, then there exists a smooth function 
$\ell: T\SDiff(M)/G_\odot \to \mathbb{R}$ defined by 
$\ell( [ \varphi, \dot{\varphi}] ) = 
L( \varphi, \dot{\varphi})$ for each 
$(\varphi, \dot{\varphi}) \in T\SDiff(M)$.  By using $\Psi_{\mathcal{I}}$ we can 
alternatively define $\ell$ as a function on $TX \oplus 
\tilde{\mathfrak{g}}_\odot$\footnote{We will abuse notation 
and use $\ell$ to simultaneously denote a function on 
$T\SDiff(M) / G_\odot$ and $TX \oplus 
\tilde{\mathfrak{g}}_\odot$.}. For example, the reduced 
Lagrangian of $L_{\Euler}$, denoted $\ell_{\Euler}: TX \oplus
\tilde{\mathfrak{g}}_\odot \to \mathbb{R}$, is given by
\begin{align}
\ell_{\Euler}(x,\dot{x},\xi_x) = \frac{\rho}{2} \int_{M}
{\|\xi_x + \inter(\dot{x})\|^2 d\mathrm{vol}(m) }. 
\label{eq:reduced_KE}
\end{align}
In order to write down the resulting equations of motion on 
the vector bundle $TX \oplus \tilde{\mathfrak{g}}_\odot$, it 
helps to note that there is a natural covariant derivative 
induced by the metric on $M$ and $\inter$.  In particular, we 
 introduce the Riemannian metric on $X$
given by
\[
\left\langle v_x, w_x \right\rangle_X :=
\int_M{ \left\langle \inter(v_x)(m) ,\inter(w_x)(m) 
\right\rangle_M d\mathrm{vol}(m) }.
\]
We may use the Levi-Civita connection associated to 
$\langle \cdot , \cdot \rangle_X$ to get a covariant 
derivative on $TX$.  Taking the direct sum of the covariant 
derivative on $X$ and the covariant derivative on 
$\tilde{\mathfrak{g}}_\odot$ produces a covariant derivative 
on $TX \oplus \tilde{\mathfrak{g}}_\odot$.  Let $\frac{D}{Dt}$
be the covariant derivative along a curve relative to
this direct sum covariant derivative on $M$ and
$\tilde{\mathfrak{g}}_\odot$.  
This is the final tool we require in order to express the 
reduced equations of motion.

\begin{thm} \label{thm:main1}
Let $L:T\SDiff(M) \to \mathbb{R}$ be a $G_\odot$-invariant 
Lagrangian {\rm (}for example, the kinetic energy Lagrangian
given in \eqref{eq:KE}{\rm )}.  Let $\inter:TX \to
\mathfrak{X}_{\mathrm{div}}(M)$ be
an interpolation method and  $\ell:TX \oplus
\tilde{\mathfrak{g}}_\odot \to\mathbb{R}$ the reduced
Lagrangian.  Let $\varphi_t
\in \SDiff(M)$ be a curve and set
$(x, \dot{x}, \xi_x)(t): = 
\Psi_\mathcal{I}\left(\left[\varphi_t, \dot{\varphi}_t
\right]\right)$. Then the following are equivalent.

\begin{itemize}
\item[{\rm (i)}] The curve $\varphi_t$ is critical for the action
\[
S = \int_0^1{L(\varphi_t, \dot{\varphi}_t) dt}
\]
with respect to variations $\delta \varphi_t$ with fixed end points.
\item[{\rm (ii)}] The curve $\varphi_t$ satisfies the Euler-Lagrange
equations
\[
\frac{D}{Dt} \left( \pder{L}{\dot{\varphi}_t} \right) -
\pder{L}{\varphi_t} = 0.
\]
with respect to an arbitrary covariant derivative on 
$T\SDiff(M)$.

\item[{\rm (iii)}] The curve $(x,\dot{x},\xi_x)(t)$ is critical for
the reduced action
\[
[S] = \int_0^1{\ell(x,\dot{x},\xi_x)(t) dt}
\]
with respect to arbitrary variations $\delta x(t)$, with
fixed end points, and covariant variations of $(x,\xi_x)$
of the form
\[
\delta^\inter (x,\xi_x) = \frac{D(x,\eta_x)}{Dt} +
[(x,\eta_x),(x,\xi_x)] + \widetilde{B}( \dot{x}, \delta x)
\]
for arbitrary curves $(x,\eta_x)(t) \in \tilde{\mathfrak{g}}_\odot$ which cover $x(t)$.
\item[{\rm (iv)}]  The curve $(x,\dot{x},\xi_x)(t)$ satisfies the
Lagrange-Poincar\'{e} equations
\begin{align*}
\frac{D}{Dt} \left( \pder{\ell}{\dot{x} } \right) -
\pder{\ell}{x} &= i_{\dot{x}} \widetilde{B}_{{\partial \ell}/ \partial \xi_x} \quad \text{on} \quad TX \\
\frac{D}{Dt} \left( \pder{\ell}{\xi_x} \right) &=
-\ad_{(x,\xi_x)}^*\left( \pder{\ell}{\xi_x} \right) \quad \text{on} \quad \tilde{\mathfrak{g}}_\odot
\end{align*}
where $\widetilde{B}_{{\partial \ell}/ \partial \xi_x}$ is the real-valued 2-form on $X$ given by $\widetilde{B}_{{\partial \ell}/ \partial \xi_x}(v_x,w_x) = \left\langle \dfrac{\partial \ell}{\partial \xi_x} , \widetilde{B}(v_x,w_x) \right\rangle$.
\end{itemize}
\end{thm}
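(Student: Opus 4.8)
The plan is to prove the cycle $\mathrm{(i)}\Leftrightarrow\mathrm{(ii)}$, $\mathrm{(i)}\Leftrightarrow\mathrm{(iii)}$, and $\mathrm{(iii)}\Leftrightarrow\mathrm{(iv)}$. The equivalence $\mathrm{(i)}\Leftrightarrow\mathrm{(ii)}$ is the classical Hamilton's principle on $T\SDiff(M)$: by the fundamental lemma of the calculus of variations, criticality of $S$ is equivalent to the vanishing of the Euler--Lagrange one-form, and although the two terms $\frac{D}{Dt}\pder{L}{\dot\varphi_t}$ and $\pder{L}{\varphi_t}$ in $\mathrm{(ii)}$ each depend on the auxiliary connection (through the splitting \eqref{eq:df=dmf+def}), their difference does not, so the covariant form is connection-independent and well posed. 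I would record this and focus on the reduced description, which is the substance of the theorem.

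For $\mathrm{(i)}\Leftrightarrow\mathrm{(iii)}$ I would first invoke $G_\odot$-invariance of $L$ and the isomorphism $\Psi_\inter$ of Proposition \ref{prop:isomorphism} to identify the two action functionals, so that $S$ and $[S]$ agree along corresponding curves. The content is then the matching of variations. I would show that deformations $\delta\varphi_t$ with fixed endpoints correspond to pairs $(\delta x,(x,\eta_x))$, with both $\delta x$ and $(x,\eta_x)$ vanishing at the endpoints, and that under this correspondence the induced covariant variation $\delta^{\inter}(x,\xi_x)$ is exactly the admissible form \eqref{good_variations}. Concretely, I would use the principal connection of Remark \ref{rmk:connections} to split an arbitrary $\delta\varphi_t$ into its vertical part, handled by Proposition \ref{prop:ver_var} and contributing $\frac{D(x,\eta_x)}{Dt}+[(x,\eta_x),(x,\xi_x)]$, and its horizontal part, handled by Proposition \ref{prop:hor_var} and contributing $\widetilde{B}(\dot x,\delta x)$. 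Summing these (legitimate since the covariant variation depends linearly on the deformation field) reproduces \eqref{good_variations}, and tracking the endpoint conditions through the splitting gives the stated boundary behavior; criticality then transfers in both directions.

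For $\mathrm{(iii)}\Leftrightarrow\mathrm{(iv)}$ I would compute $\delta[S]$ directly. Writing $\delta\dot x=\frac{D\delta x}{Dt}$, which is valid by torsion-freeness of the Levi--Civita connection on $X$, and substituting \eqref{good_variations}, one obtains
\begin{align*}
\delta[S]=\int_0^1\Bigl[\bigl\langle\pder{\ell}{x},\delta x\bigr\rangle+\bigl\langle\pder{\ell}{\dot x},\tfrac{D\delta x}{Dt}\bigr\rangle+\bigl\langle\pder{\ell}{\xi_x},\tfrac{D(x,\eta_x)}{Dt}+[(x,\eta_x),(x,\xi_x)]+\widetilde{B}(\dot x,\delta x)\bigr\rangle\Bigr]\,dt.
\end{align*}
Since $\delta x$ and $(x,\eta_x)$ are independent and each vanishes at the endpoints, the two families of variations decouple. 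Integrating by parts with the compatibility of $\frac{D}{Dt}$ with the duality pairing, using $\langle\pder{\ell}{\xi_x},[(x,\eta_x),(x,\xi_x)]\rangle=-\langle\ad^*_{(x,\xi_x)}\pder{\ell}{\xi_x},(x,\eta_x)\rangle$, and rewriting $\langle\pder{\ell}{\xi_x},\widetilde{B}(\dot x,\delta x)\rangle=(i_{\dot x}\widetilde{B}_{\partial\ell/\partial\xi_x})(\delta x)$, the fundamental lemma applied separately to the arbitrary $(x,\eta_x)$ and $\delta x$ produces the vertical and horizontal Lagrange--Poincar\'e equations of $\mathrm{(iv)}$; reading the computation backwards gives the converse.

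The step I expect to be the main obstacle is the variation-matching in $\mathrm{(i)}\Leftrightarrow\mathrm{(iii)}$, specifically the \emph{surjectivity} of the correspondence: one must verify that every admissible variation of the form \eqref{good_variations}, with the correct endpoint conditions, is actually realized by some genuine deformation $\delta\varphi_t$, not merely that honest deformations yield variations of this form. This reconstruction, together with the careful bookkeeping of endpoint conditions under the horizontal/vertical splitting, is where the force of Lagrange--Poincar\'e reduction resides; in the Fr\'echet setting of $\SDiff(M)$ these statements are justified formally, as elsewhere in the paper. By comparison, the integration-by-parts in $\mathrm{(iii)}\Leftrightarrow\mathrm{(iv)}$ is routine once the metric-compatible covariant derivative on $TX\oplus\tilde{\mathfrak{g}}_\odot$ is available.
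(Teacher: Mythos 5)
Your proposal is correct and takes essentially the same route as the paper's proof: the standard intrinsic argument for (i)$\Leftrightarrow$(ii), the vertical/horizontal splitting of variations via Propositions \ref{prop:ver_var} and \ref{prop:hor_var} for (i)$\Leftrightarrow$(iii), and the integration-by-parts computation with the $\ad^*$ identity and the curvature pairing for (iii)$\Leftrightarrow$(iv). The surjectivity you single out as the main obstacle is dispatched in the paper in one line: given arbitrary curves $(x,\eta_x)(t)$ and $\delta x(t)$, the deformation $\delta\varphi_t := \left(\eta_x(t) + \inter(\delta x(t))\right)\circ\varphi_t$ is a genuine variation of $\varphi_t$ inducing exactly \eqref{good_variations}, so the correspondence between variations is onto by explicit construction rather than by any formal Fr\'echet-space argument.
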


\begin{proof}
The equivalence of (i) and (ii) is an intrinsic formulation of the standard derivation of the Euler-Lagrange equations from
Hamilton's variational principle (see, e.g., \cite{FOM},
\cite{Arnold2000}, \cite{MandS}).
We have chosen to write the intrinsic formulation more out of necessity than interest, as we are working on a space with non-trivial coordinate charts.
In this case, the ``equivalence of mixed partials'' comes from our definition of the covariant derivative induced by a generalized connection.
Specifically, let $\varphi_{t,\lambda}$ be an embedding of a surface into $\SDiff(M)$ (i.e., a deformation of a curve).  Then we observe that
\begin{align}
\frac{D\dot{\varphi_t}}{D \lambda} = v_{\downarrow}\left( \mathrm{ver} \left(\left.
\frac{\partial^2 \varphi_{t,\lambda} }{ \partial t \partial
    \lambda }\right|_{\lambda = 0} \right) \right) =
\frac{D \delta \varphi_t}{Dt} \label{eq:mixed_partials}
\end{align}
where $\dot{\varphi_t} = \pder{\varphi_{t,0}}{t}$ and $\delta
\varphi_{t} =
\restr{\pder{\varphi_{t,\lambda}}{\lambda}}{\lambda=0}$.
Using this observation, one is able to prove the equivalence of (i) and (ii) using a standard integration by parts argument (see \cite[Proposition 3.8.3]{FOM} for a proof with coordinates or \cite[Proposition 2.5]{JaVa2012} for a proof without coordinates).
\smallskip

We now prove the equivalence of (i) and (iii). By construction, the two actions $S$ and $[S]$ coincide on
the indicated curves.
If the action $S$ is extremized along $(\varphi_t,\dot{\varphi}_t)$, then
$[S]$ must be extremized along $(x,\dot{x},\xi_x)(t) = 
\Psi_\inter( [ (\varphi_t,\dot{\varphi}_t ) ] )$ (see
Proposition \ref{prop:isomorphism}) with respect to  variations induced by an arbitrary variation 
$\delta\varphi_t$.  In particular, by Proposition \ref{prop:hor_var}, the contribution to the covariant variation $\delta^\mathcal{I} \xi_x$ induced by the variation $\delta x$ is $\widetilde{B}( \dot{x}, \delta x)$.  Secondly, by Proposition \ref{prop:ver_var}, the contribution to $\delta^{\mathcal{I}} (x,\xi_x)$ due to the vertical component $\eta_x  = \delta \varphi_t \circ \varphi_t^{-1} - \mathcal{I}( \delta x )$ is given by $\frac{D}{Dt}( x, \eta_x) - [ (x, \xi_x) , (x, \eta_x) ]$.  In summary, $\delta^{\mathcal{I}} (x,\xi_x)$ is given by \eqref{good_variations}.  Thus (i) 
implies (iii).

Conversely, given curves $(x, \eta_x)(t) \in 
\tilde{\mathfrak{g}}_\odot$ and $\delta x(t) \in TX$
covering $x(t) \in X$, define 
\[
\delta \varphi(t): = \left(\eta_x(t) + 
\mathcal{I}(\delta x(t)\right)
 \circ\varphi_t
\]
and note that this is an arbitrary variation along the
curve $\varphi_t$. Since $S$ and $[S]$ coincide along the
curves $(\varphi_t, \dot{\varphi}_t)$ and $\Psi_\mathcal{I}(
[\delta\varphi_t, \dot{\varphi}_t]) = (x(t), \dot{x}(t), 
\xi_x(t))$, respectively, and 
\[
\Psi_\mathcal{I} \left([(\varphi, \delta\varphi)]\right) =
\left(x, \delta x,  \frac{D}{D t} (x, \eta_x) + [(x, \eta_x),
(x, \xi_x)] + \tilde{B}(\dot{x}, \delta x)\right), 
\]
we see that $\delta S = \delta[S]$. This shows that (iii) 
implies (i).
\smallskip

Finally, we prove the equivalence of (iii) and (iv).
Assume $[S]$ is extremized with respect to the variations given in equation
\eqref{good_variations}.
Then we find that
\[
    \delta [S] = \restr{ \pder{}{\lambda} }{\lambda = 0}
    \int_{0}^{1}{ \ell( x_\lambda, \dot{x}_\lambda,
      \dot{\xi}_{x,\lambda}) dt} 
    = \int_0^1{ \left\langle d\ell , \delta(x,\dot{x},\xi_x) \right\rangle dt}.
\]
We may use \eqref{eq:df=dmf+def} to rewrite the exterior derivative of $\ell$ using the covariant derivative on the vector bundle $TX \oplus \tilde{\mathfrak{g}}_\odot$ to arrive at
\[
\delta [S]
= \int_{0}^{1} \left({\left\langle \pder{\ell}{\xi_x} ,
    \left. \frac{D(x, \xi_x) }{D\lambda} \right|_{\lambda=0} \right\rangle +
      \left\langle\pder{\ell}{\dot{x}}, 
      \left.\frac{D \dot{x}}{D\lambda}\right|_{\lambda=0}
      \right\rangle + \left \langle \pder{
        \ell }{x} , \delta x \right\rangle}\right)dt.
\]
We can now make two substitutions.  Firstly, by definition $\delta^\mathcal{I} (x,\xi_x) := \left. \frac{D(x,\xi_x)}{D\lambda} \right|_{\lambda = 0}$, which we assume is given by \eqref{good_variations} for some curve $(x,\eta_x)_t \in \tilde{\mathfrak{g}}_\odot$ covering $x(t)$.  Secondly, if we invoke Definition \ref{def:generalized_connection} with respect to the Levi-Civita connection on $X$, we observe
\begin{align}
\left.\frac{D\dot{x}}{D\lambda}\right|_{\lambda=0}
\stackrel{\eqref{covariant_derivative}} = 
v_\downarrow \left( \ver \left( \left. \pder{}{\lambda} \right|_{\lambda=0} \left( \frac{\partial x}{\partial t} \right) \right) \right) 
	= v_\downarrow \left( \ver \left( \pder{}{t} \left(  \left. \pder{x}{\lambda} \right|_{\lambda = 0} \right) \right) \right) 
	= \frac{D \delta x}{Dt}\,. 
	\label{eq:mixed_partials}
\end{align}
If we substitute \eqref{good_variations} and \eqref{eq:mixed_partials} into our expression for $\delta [S]$ we find
\begin{align*}
\delta [S] &= \int_{0}^{1}{ \left\langle \pder{ \ell}{\xi_x} ,
\frac{D(x,\eta_x)}{Dt} + [(x,\eta_x),(x,\xi_x)] +
\widetilde{B}(\dot{x},\delta x) \right\rangle dt} \\
& \quad \quad +\int_{0}^{1}\left({ \left\langle \pder{ \ell }{\dot{x} }, \frac{ D \delta x}{Dt} \right\rangle +
\left\langle \pder{ \ell }{x} , \delta x \right\rangle}
\right)dt
\end{align*}
We separate $\delta [S]$ into a part which is proportional to $\delta x$ and a part which is proportional to $\eta_x$.  We will call these components $T_x$ and $T_\eta$ respectively, so that $\delta [S] = T_x + T_\eta$.  In particular, $\delta [S]$ is zero with respect to arbitrary variations $\delta x$ and $\eta_x$ if and only if $T_x$ and $T_\eta$ are identically $0$. We find that
\[
	T_\eta = \int_{0}^{1} \left \langle \pder{ \ell}{\xi_x} , \frac{D(x,\eta_x)}{Dt} + [ (x,\eta_x) , (x, \xi_x) ] \right \rangle dt.
\]
The definition of the covariant derivative on the dual-adjoint bundle $\tilde{ \mathfrak{g}}_\odot^*$ yields the equation
\[
	\left \langle \pder{ \ell}{\xi_x} , \frac{D (x,\eta_x)}{Dt} \right \rangle = \frac{d}{dt} \left \langle \pder{ \ell}{\xi_x} , (x,\eta_x) \right \rangle - \left \langle \frac{D}{Dt} \left( \pder{ \ell}{\xi_x} \right) , (x, \eta_x) \right \rangle
\]
which we  substitute into the expression for $T_\eta$ to find
\begin{align*}
T_\eta &=  \left. \left[ \left \langle \pder{ \ell}{\xi_x} , (x,\eta_x) \right \rangle \right] \right|_{t=0}^{t=1} \\
 & \quad - \int_0^1\left(\left\langle \frac{D}{Dt} \left( \pder{ \ell}{\xi_x} \right) , (x, \eta_x) \right \rangle - \left\langle \pder{ \ell}{ \xi_x} , [ (x, \eta_x) , (x, \xi_x) ] 
\right\rangle\right) dt
\end{align*}
As $\eta_x = 0$ at time $t=0,1$ the boundary term is $0$.  Moreover,
\begin{align*}
	\left \langle \pder{ \ell}{ \xi_x} , [ (x,\eta_x) , ( x, \xi_x) ] \right \rangle &= \left \langle \pder{\ell}{\xi_x} , - \ad_{(x,\xi_x)}( x,\eta_x) \right \rangle \\
		&= \left \langle - \ad^*_{(x,\xi_x) } \left( \pder{ \ell}{\xi_x} \right) , (x, \eta_x) 
		\right \rangle.
\end{align*}
We  substitute the above expression into $T_\eta$ and get
\[
	T_\eta =  - \int_{0}^{1} \left \langle \frac{D}{Dt} \left( \pder{ \ell}{\xi_x} \right) + \ad_{(x,\xi_x)}^* \left( \pder{ \ell}{\xi_x} \right) , (x,\eta_x) \right \rangle dt. 
\]
If $T_\eta = 0$ for an arbitrary $(x,\eta_x)$ then we find 
that the vertical equation
\[
  \frac{D}{Dt} \left( \pder{\ell}{\xi_x} \right) = - \ad^*_{(x,\xi_x) } \left( \pder{ \ell}{\xi_x} \right)
\]
must hold.

We now consider the equation $T_x = 0$.  We find that
\[
	T_x = \int_0^1\left( \left\langle \pder{ \ell}{\dot{x}} , \frac{D \delta x }{Dt} \right \rangle  + \left \langle \pder{\ell}{x} , \delta x \right \rangle + \left \langle \pder{\ell}{\xi_x} , \widetilde{B}( \dot{x} , \delta x) \right \rangle \right)dt.
\]
Using the definition of the Levi-Civita derivative on $T^*X$ we find
\[
	\left \langle \pder{\ell}{\dot{x}} , \frac{D \delta x}{Dt} \right \rangle = \frac{d}{dt} \left \langle \pder{\ell}{\dot{x}} , \delta x \right \rangle - \left \langle \frac{D}{Dt} \left( \pder{\ell}{\dot{x}} \right) , \delta x \right \rangle.
\]
We substitute this into our expression for $T_x$ to find
\[
	T_x = \left. \left[ \left \langle \pder{ \ell}{\dot{x}} , \delta x \right \rangle \right] \right|_{t=0}^{t=1} - 
\int_{0}^{1} \left(\left\langle \frac{D}{Dt} \left( \pder{ \ell}{\dot{x}} \right) - \pder{\ell}{x} , 
\delta x \right\rangle - \left\langle \pder{\ell}{\xi_x}, \widetilde{B}(\dot{x}, \delta x) \right\rangle \right) dt.
\]
As $\delta x$ vanishes at the end-points, we may ignore the boundary terms.  Moreover, the final term may be re-written as
\[
\left \langle \pder{ \ell}{\xi_x} , 
\widetilde{B}( \dot{x} , \delta x) \right \rangle =  
\left \langle i_{\dot{x}} \widetilde{B}_{{\partial \ell}/ \partial \xi_x} , 
\delta x \right \rangle.
\]
Thus we find
\[
	T_x = - \int_0^1 \left \langle \frac{D}{Dt} \left( \pder{ \ell}{\dot{x}} \right) - \pder{\ell}{x} - i_{\dot{x}} \widetilde{B}_{{\partial \ell}/ \partial \xi_x} , \delta x \right \rangle dt.
\]
If $T_x = 0$ for arbitrary fixed end-point variations, $\delta x$, then the horizontal equation
\[
	\frac{D}{Dt} \left( \pder{ \ell}{\dot{x}} \right) - \pder{\ell}{x} = i_{\dot{x}} \widetilde{B}_{{\partial \ell}/ \partial \xi_x}
\]
must hold.  Thus we have shown that (iii) implies (iv).  Additionally,
the above sequence of calculations is reversible and (iv) can be shown to imply (iii).
\end{proof}

At this point, we may be inspired to come up with particle methods by trying to better understand the horizontal equation (the equation for the dynamics on $TX$).
However, for the case of an incompressible homogenous ideal fluid one has the right to be very skeptical of this idea because the vertical equation expresses the overwhelming majority of the dynamics.
In fact, if we unpack the terms of the vertical equation for the Lagrangian $L_{\Euler}$, we find that $\pder{\ell_{\Euler}}{\xi_x} = \left\langle \xi_x + \inter(\dot{x}), \cdot \right\rangle_{L^2} = u^\flat$, so that the left hand side is
\begin{align*}
\frac{Du^\flat}{Dt} = \pder{u^\flat}{t} + \ad^*_{\inter(\dot{x})}(u^\flat),
\end{align*}
while the right hand side of the vertical equation is $-\ad^*_{(x,\xi_x)}(u^\flat)$.
Bringing both terms to one side we find $\pder{u^\flat}{t} + \ad_u^*u^\flat = 0$, which is the Euler equation for an
ideal homogeneous fluid equation (see, e.g., \cite{FOM} \cite{ArKh1992}), except for the fact that the vertical equations (strictly speaking) only address the domain complementary to the particle locations.
In essence, the horizontal equations for $L_{\Euler}$ only state that the particles move in such a way that $u$ can be extended smoothly by ``filling the holes''.
Despite this sobering observation, we know that computational scientists simulate fluids and successfully use interpolation methods frequently.
After studying what happens when we reduce by a class of subgroups of $G_\odot$ in \S \ref{sec:higher_order}, we will try to understand how the horizontal equation can potentially inspire particle methods in \S \ref{sec:particle_methods}.

\section{Higher Order Isotropy Groups} \label{sec:higher_order}
In the previous section we reduced our system by the Lie group
$G_\odot$.  This resulting in a set of coupled equations on the vector bundles $TX$
and $\tilde{\mathfrak{g}}_\odot$.  Notably, $X$ is the configuration manifold for the dynamics of
particles in $M$.  That data associated to an $x \in X$ consists of the values of $N$ points of some
diffeomorphism $\varphi \in \SDiff(M)$ which represents the configuration of the fluid.
We would like to take this further and construct particles which carry the values
of diffeomorphisms as well as a finite amount of derivative data.  In other words
we would like to have jet-data attached out particles.  This will be useful in the final section 
where we construct some plausible particle methods.

To obtain these data augmented particles we will consider the Lie group
\[
G_{\odot}^{(k)} := \{ \psi \in G_{\odot} \mid T^{(k)}_\odot
\psi \text{ is the identity on } T^{(k)}_\odot M \}.
\]
In local coordinates, elements of $G_\odot^{(k)}$ are
diffeomorphisms such that the Taylor expansion around each
point $\odot_i$ of $\odot$ is of the form
\[
	\psi( \odot_i + \delta x_i) = \odot_i + \delta x_i + o( \| \delta x_i \|^{k})
\]
  To make this more precise we
will include a short discussion on jet bundles---in particular, jets of elements in $\SDiff(M)$.


\subsection{Jet bundles of the special diffeomorphism group}
Consider the following equivalence relation:
$\varphi_1, \varphi_2 \in \operatorname{SDiff}(M)$ are
\emph{ equivalent to $k^\text{th}$ order at $x = 
(x_1, \ldots, x_N) \in X$} if,
in a chart (and hence all charts), they have the same Taylor
expansion at each $x_i$, $i=1, \ldots, N$.  Denote the
set of equivalence classes $j^k_x(\varphi)$ for this relation 
by $\mathcal{J}_x^k(\SDiff(M))$;  elements $j^k_x(\varphi)\in 
\mathcal{J}_x^k(\SDiff(M))$ are called $k$-jets of
$\SDiff(M)$ sourced at $x \in X$. For $k=0$, the equivalence
relation above states that $\varphi_1(x) = \varphi_2(x)$ and 
for $k = 1$ it  means that $T_x \varphi_1 = T_x \varphi_2$.  
For any $l < k$, there is a natural projection $\pi^k_l: 
\mathcal{J}^k_x(\SDiff(M)) \to
\mathcal{J}^l_x(\SDiff(M))$.  In particular,
$\pi^k_0 \left(j^k_x\varphi\right) =\varphi(x) \in X$. 
If $j^k_x(\varphi_1) = j^k_x(\varphi_2)$ and 
$j^k_y(\psi_1) = j^k_y(\psi_2)$ for $\varphi_1, \varphi_2, 
\psi_1, \psi_2 \in \SDiff(M)$, $y=\varphi_1(x)=\varphi_2(x)$,
then $j^k_x(\psi_1 \circ \varphi_1) = 
j^k_x(\psi_2 \circ \varphi_2)$. This leads to the definition
of jet composition $j^k_y(\psi)\circ j^k_x(\varphi) := j^k_x(\psi\circ \varphi)$. For more information on jet
spaces see, e.g., \cite[\S12]{KMS99}.
\medskip

There is an alternative way to define $k$-jets using the 
$k$th order tangent bundle, which we now define.
Note that a curve in $M$ is merely an
element of $C^{\infty}(I ; M)$ for some interval $I \subset \mathbb{R}$
which contains $0 \in \mathbb{R}$.   
Define the equivalence relation between curves on $M$ by 
$c_1( \cdot ) \sim_0 c_2 ( \cdot)$ if and only if 
$c_1( 0 ) = c_2(0)$.  Thus, the quotient 
$\frac{C^{\infty}(I;M)}{ \sim_0}$ is identified with $M$ itself.  
Next, consider the equivalence relation 
$c_1( \cdot ) \sim_1 c_2( \cdot)$ given by the conditions 
$c_1(0) = c_2(0)$ and $\left. \frac{dc_1}{dt} \right|_{t=0} = 
\left. \frac{dc_2}{dt} \right|_{t=0}$.  We observe that 
$TM = \frac{C^{\infty}(I;M)}{ \sim_1}$ (see, e.g., 
\cite[Definition 1.6.3]{FOM}, 
\cite[Definition 0.2.3]{Bleecker1981}, 
\cite[Definition 3.3.1]{MandS}).  Finally, define the 
equivalence relation $c_1( \cdot ) \sim_k c_2( \cdot)$ given 
by the conditions
\[
c_1(0) = c_2(0), \quad \left. \frac{dc_1}{dt} \right|_{t=0} = 
\left. \frac{dc_2}{dt} \right|_{t=0}, \quad \ldots, 
\quad \left. \frac{d^k c_1}{dt^k} \right|_{t=0} = 
\left. \frac{d^k c_2}{dt^k} \right|_{t=0}
\]
in a chart (and hence all compatible charts at $c_1(0)$).
The \emph{$k$th order tangent bundle} is defined as the 
quotient space $T^{(k)}M := \frac{ C^{\infty}(I;M) }{ \sim_k }$ 
equipped with the fiber bundle projection $\tau^{(k)}: T^{(k)} 
M \to M$.  Given a curve $c \in C^{\infty}(I;M)$ denote its 
equivalence class with respect to $\sim_k$ by $[c]_k$; thus 
the fiber projection is given by $\tau^{(k)}( [c]_k) = c(0)$.
Note that, with the exception of $k=0,1$,  $\tau^{(k)}: 
T^{(k)} M \to M$ are not vector bundles. 

Note that any $\varphi \in \SDiff(M)$ acts on a curve 
$c \in C^{\infty}(I;M)$ by composition.  Define the diffeomorphism 
$T^{(k)} \varphi : T^{(k)}M \to T^{(k)}M$ by 
$T^{(k)} \varphi([c ]_k) := [\varphi \circ c]_k$.  Given 
$\varphi \in \SDiff(M)$ and a point $m \in M$ we see that 
$T_m^{(k)}\varphi:(\tau^{(k)})^{-1}(m)\rightarrow 
(\tau^{(k)})^{-1}(\varphi(m))$. Note that
$T^{(k)} (\varphi_2 \circ \varphi_1) = T^{(k)} \varphi_2  
\circ T^{(k)} \varphi_1$.  If we pay attention to the base 
points, this last equation reads $T^{(k)}_m (\varphi_2 \circ 
\varphi_1) = T^{(k)}_{\varphi_1(m)} \varphi_2 \circ T^{(k)}_m 
\varphi_1$. Following our conventions in the previous section, 
we define $T_{\odot}^{(k)} \varphi : =  
\left(T_{\odot_1}^{(k)} \varphi, 
\ldots, T_{\odot_N}^{(k)} \varphi \right)$.

Define the subgroup
\begin{align*}
G^{(k)}_\odot :&= \{ \varphi \in \SDiff(M)\mid 	 
T^{(k)}_{\odot} \varphi = T^{(k)}_\odot id  \}\\
& = \left\{\psi \in G_\odot \;\left|  \; 
\frac{\partial \psi^i}{\partial m^j}(\odot_l) = \delta_i^j,\;
\;\frac{\partial^{|\alpha|} \psi^i}{\partial m^\alpha} 
(\odot_l)= 0, \right. 1<|\alpha| \leq k, \; l=1, \ldots, N
\right\},
\end{align*}
where $T^{(k)}_{\odot_j} id$ is the identity map on the fiber 
$T^{(k)}_{\odot_j}M$, $\alpha:= (\alpha_1, \ldots, 
\alpha_{\dim(M)})$ is a multi-index, $\alpha_j \geq 0$, 
$\alpha\in\mathbb{N}^{\dim(M)}$,  $|\alpha|: = 
\alpha_1 + \cdots + \alpha_{\dim(M)}$, and 
$\frac{\partial^{|\alpha|}}{\partial m^\alpha} : = 
\frac{\partial^{|\alpha|}}{(\partial m^1)^{\alpha_1} 
(\partial m^2)^{\alpha_2} \cdots 
(\partial m^{\dim(M)})^{\alpha_{\dim(M)}}}$. The second
equality is a direct verification in a local chart.
The subgroup $G^{(k)}_\odot$ is itself a Lie group, and we denote its Lie algebra by $\mathfrak{g}^{(k)}_\odot$, namely
\[
\mathfrak{g}_\odot^{(k)} := \{ \xi \in \mathfrak{g}_\odot \mid \left. \partial_\alpha \right|_{\odot} \xi = 0,\;   0 \leq | \alpha | \leq k \},
\]
i.e., $\mathfrak{g}^{(k)}_\odot$ consist of vector fields 
$\xi \in \mathfrak{X}_{\mathrm{div}}(M)$ which vanish at the points $\odot$ to $k$th order.

In the following sections we will consider reducing Lagrangian 
systems on $\SDiff(M)$ by this subgroup.  In order to relate 
these ideas to the reductions performed earlier in the paper, 
we note the following property.
  
\begin{prop} \label{prop:normal_subgroup}
The subgroup $G^{(k)}_\odot$ is normal in $G_\odot$.
\end{prop}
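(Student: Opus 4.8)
The plan is to exploit the functoriality of the $k$th order tangent functor recorded above, $T^{(k)}(\varphi_2 \circ \varphi_1) = T^{(k)}\varphi_2 \circ T^{(k)}\varphi_1$, together with the fact that elements of $G_\odot$ pin each base point $\odot_j$. First I would note that $G^{(k)}_\odot \subseteq G_\odot$: the hypothesis $T^{(k)}_\odot \psi = T^{(k)}_\odot id$ includes the $0$-jet, so $\psi(\odot_j) = \odot_j$ for each $j$. Hence, given $\psi \in G_\odot$ and $\eta \in G^{(k)}_\odot$, the conjugate $\psi \circ \eta \circ \psi^{-1}$ already lies in $G_\odot$, and the only thing left to check is that its full $k$-jet at each $\odot_j$ is trivial.

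Next I would compute $T^{(k)}_{\odot_j}(\psi \circ \eta \circ \psi^{-1})$ using the base-point–aware chain rule $T^{(k)}_m(\varphi_2 \circ \varphi_1) = T^{(k)}_{\varphi_1(m)}\varphi_2 \circ T^{(k)}_m \varphi_1$. Since $\psi \in G_\odot$ we have $\psi^{-1}(\odot_j) = \odot_j$, and since $\eta \in G^{(k)}_\odot \subseteq G_\odot$ we have $\eta(\odot_j) = \odot_j$; thus every intermediate base point stays at $\odot_j$ and the formula collapses to
\[
T^{(k)}_{\odot_j}(\psi \circ \eta \circ \psi^{-1}) = T^{(k)}_{\odot_j}\psi \circ T^{(k)}_{\odot_j}\eta \circ T^{(k)}_{\odot_j}\psi^{-1}.
\]

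Then I would insert the defining property $T^{(k)}_{\odot_j}\eta = T^{(k)}_{\odot_j}id$ to drop the middle factor, leaving $T^{(k)}_{\odot_j}\psi \circ T^{(k)}_{\odot_j}\psi^{-1}$. Applying functoriality once more to $\psi \circ \psi^{-1} = id$ (again using $\psi^{-1}(\odot_j) = \odot_j$) identifies this composite with $T^{(k)}_{\odot_j}id$, the identity on the fiber $(\tau^{(k)})^{-1}(\odot_j)$. As $j$ was arbitrary, $T^{(k)}_\odot(\psi \circ \eta \circ \psi^{-1}) = T^{(k)}_\odot id$, i.e. $\psi \circ \eta \circ \psi^{-1} \in G^{(k)}_\odot$, which is the desired conclusion.

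I do not expect a genuine obstacle here; the only point demanding care is the bookkeeping of base points in the chain rule, and the entire argument turns on the single observation that $\psi \in G_\odot$ keeps every base point fixed at $\odot_j$, so that all the tangent maps compose within one fiber. One could equivalently argue in the local-coordinate description of $G^{(k)}_\odot$ through the multi-index conditions on the partial derivatives $\partial^{|\alpha|}\psi^i/\partial m^\alpha(\odot_l)$, but the jet-functorial version is both cleaner and coordinate-free, so that is the route I would take.
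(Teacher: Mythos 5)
Your proof is correct and is essentially the paper's own argument: the paper likewise conjugates, applies functoriality of $T^{(k)}_\odot$ to write $T^{(k)}_\odot(\varphi \circ \psi \circ \varphi^{-1}) = T^{(k)}_\odot \varphi \circ T^{(k)}_\odot \psi \circ (T^{(k)}_\odot \varphi)^{-1}$, and cancels after dropping the identity middle factor. Your version merely makes explicit the base-point bookkeeping (that all maps fix each $\odot_j$, so the compositions stay within one fiber), which the paper leaves implicit.
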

\begin{proof}
Let $\varphi \in G_\odot$ and $\psi \in G^{(k)}_\odot$. Thus $T_\odot^{(k)} \psi = T^{(k)}_\odot id$
and we find
\[
T^{(k)}_\odot (\varphi \circ \psi \circ \varphi^{-1}) = 
T^{(k)}_\odot \varphi \circ T^{(k)}_\odot \psi \circ 
(T^{(k)}_\odot \varphi)^{-1} = T^{(k)}_{\odot} id.
\]
 This shows $\varphi \circ \psi \circ \varphi^{-1} \in G_\odot^{(k)}$.
\end{proof}

The subgroup $G^{(k)}_\odot$ acts on $\SDiff(M)$ by 
composition on the right. Note that by the relevant definitions, $j^k_\odot \varphi = j^k_\odot(id)$ if and 
only if $T^{(k)}_\odot \varphi = T^{(k)}_\odot id$. This immediately
shows that both maps
\[
\mathcal{J}_\odot^{k}( \SDiff(M)) \ni j^k_\odot \varphi \longleftrightarrow
[\varphi] \in \SDiff(M) / G^{(k)}_\odot.
\]
are well defined and inverses to each other. Thus, we can
identify $\mathcal{J}_\odot^{k}( \SDiff(M))$ with 
$\SDiff(M) / G^{(k)}_\odot$ which we shall do in the rest
of the paper. Using this identification, Proposition
\ref{prop:normal_subgroup} implies that 
\[
\mathcal{J}_\odot^{k}( G_\odot ) = G_\odot / G^{(k)}_\odot
\]
is a finite dimensional Lie group with Lie algebra $\mathfrak{g}_\odot / \mathfrak{g}_\odot^{(k)}$.
In particular, $\mathcal{J}^{k}_\odot( G_\odot)$ consists of $k$th order 
Taylor series data. We will use the right action of 
$\mathcal{J}_\odot^{k}(G_\odot)$ on 
$\mathcal{J}^{k}(\SDiff(M))$ in 
\S \ref{sec:particle_methods} to find conserved momenta 
for a class of particle methods, using Noether's theorem.

Finally, we would like to relate the tangent bundle of $\SDiff(M)$ to the tangent bundle of $\mathcal{J}_\odot^{k}(\SDiff(M))$.  Given any $\dot{\varphi} \in T\SDiff(M)$ we may consider the $k$-jet $j^k_{\odot}(\dot{\varphi}) \in T \mathcal{J}^k_{\odot}( \SDiff(M))$ as the vector
\[
	j^k_{\odot}(\dot{\varphi} ) = \left. \frac{d}{dt}\right|_{t=0} j^k_{\odot}( \varphi_t)
\]
for an arbitrary curve $\varphi_t \in \SDiff(M)$ such that $\left. \frac{d \varphi_t}{dt} \right|_{t = 0}= \dot{\varphi}$.
Of course, this definition gives a meaning to the expression $j^k_\odot(u)$ when $u \in \mathfrak{X}_{\mathrm{div}}(M) \subset T\SDiff(M)$ as well. 
In particular, we conclude that $\mathfrak{g}^{(k)}_\odot = \{ u \in \mathfrak{X}_{\mathrm{div}}(M) \mid j^k_\odot(u) = 0 \}.$

Given any $\varphi \in \SDiff(M)$ such that 
$\varphi(\odot) = x^{(0)} \in X$ we see that 
$j^{k}_\odot( u \circ \varphi)$ is an element
of $T\mathcal{J}_\odot^k( \SDiff(M))$.  Moreover, we may consider the
entire equivalence class, $x^{(k)}: = j^k_\odot \varphi$, as a set of maps
which can be composed with the set of maps comprising 
$j^k_{x^{(0)}} (u)$  to define
$j^{k}_{x^{(0)}}(u)\circ x^{(k)}:=j^k_\odot (u\circ \varphi)$.

Define the vector bundle $\tilde{\mathfrak{g}}_\odot^{(k)}$ with base $X^{(k)}:= 
\SDiff(M)/G_\odot^{(k)} = \mathcal{J}^{(k)}(\SDiff(M))$ by
\[
\tilde{\mathfrak{g}}^{(k)}_\odot := \left. 
\left\{ \left(x^{(k)} , \xi^{(k)}_{x^{(k)}}\right) \;\right|\; 
x^{(k)} \in X^{(k)} ,\; x^{(0)}: = 
\pi^k_0 \left(x^{(k)}\right) \in X,\;
\xi^{(k)}_{x^{(k)}} \in \mathfrak{g}^{(k)}_{x^{(0)}} \right\},
\]
where $\mathfrak{g}^{(k)}_{x^{(0)}}$ is the Lie algebra of 
the isotropy group $G^{(k)}_{x^{(0)}}$ for each 
$x^{(0)} \in X$, i.e., the set of divergence
free vector fields which vanish up to order $k$ at the $N$ 
particle locations $x^{(0)} \in X \subset M^N$
\footnote{The bundle $\tilde{\mathfrak{g}}^{(k)}_\odot$ is identical to the adjoint bundle $\frac{ \SDiff(M) \times 
\mathfrak{g}_\odot^{(k)} }{ G_\odot^{(k)} }$ when equipped with the fiberwise Lie bracket given by the negative of the
Jacobi-Lie bracket of vector fields.}.

\subsection{Quotients and $k^\text{\rm th}$ order interpolation methods}
In this section we generalize the notions of an interpolation method to handle jet-data to deal with reduction by $G^{(k)}_\odot$.
Define the quotient manifold $X^{(k)} := \SDiff(M) / G^{(k)}_\odot \equiv \mathcal{J}^{(k)}_\odot( \SDiff(M))$, which consists of particles augmented by Taylor series data.
For the case $k=1$, an element of $X^{(1)}$ is given by a 
$k$-tuple of unit-volume frames above non-overlapping 
points in $M$.   Therefore, $X^{(1)}$ is equivalent to the configuration manifold of particles with orientations and shape.  We can view orientation and shape as $1^{\rm st}$ order deformations of `infinitesimal balls'.  If $M = \mathbb{R}^d$ we identify the unit volume frames 
with $\SL(d, \mathbb{R})$ and $X^{(1)}=X^{(0)} \times 
\SL(d, \mathbb{R})^N
\rightarrow X^{(0)}$, a trivial fiber-bundle.
For $k > 1$ we obtain quantities which express higher-order deformations.   
%

Next, we follow the procedure
of Lagrange-Poincar\'{e} reduction used in the last section
but for a $k^{\rm th}$ order interpolation method, formally
defined below.

\begin{definition} \label{def:kth_order_inter}
A \emph{$k^\text{th}$ order interpolation method} is a $\mathfrak{X}_{\rm div}(M)$ valued one-form,
$\inter \in \Omega^1(X^{(k)}; \mathfrak{X}_{\mathrm{div}}(M) )$, such
that $j^k_{ x^{(0)}} \left(\inter( \dot{x}^{(k)}) \right) \circ  x^{(k)} = \left(x^{(k)},\dot{x}^{(k)}\right)$ where 
$\left(x^{(k)},\dot{x}^{(k)}\right) \in
TX^{(k)}$, and $x^{(0)} = \pi^k_0 \left(x^{(k)}\right)$.
\end{definition}

Note that $k=0$ recovers Definition 
\ref{def_interpolation}.  For $k > 0$ we get the natural higher order generalization of \ref{def_interpolation}.
In other words, a $k^\text{th}$ order interpolation method produces a vector field with given $k^\text{th}$ order Taylor-series data at a finite set of points.
Again, as in remark \ref{rmk:connections}, a $k^{\rm th}$ order interpolation method is equivalent to a right $G^{(k)}_\odot$-principal connection on $\SDiff(M)$.

By using a $k^\text{th}$ order interpolation method, we can follow the same constructions and procedures as in Sections \ref{sec:preliminary} and  \ref{sec:reduced_system}, beginning
with the definition of the fiber bundle isomorphism
$\Psi^{(k)}_\inter: T\SDiff(M) / G_{\odot}^{(k)} \to TX^{(k)} 
\oplus \tilde{\mathfrak{g}}_\odot^{(k)}$ (the analogue of 
the one in Proposition \ref{prop:isomorphism})  given by
\begin{equation}
\label{psi_k}
\Psi^{(k)}_\inter( [v_\varphi] ) = j^k_\odot( v_\varphi ) \oplus \left( v_\varphi
\circ \varphi^{-1} - \inter(j^k_\odot( v_\varphi ) ) \right).
\end{equation}
Working in this manner we may repeat all the proofs of the previous section by
using the jet operator $j^k_\odot$ in the right places. For example, the evaluation
 $x=\varphi(\odot)$ is now replaced by the evaluation 
$x^{(k)}=j^k_\odot \varphi$. Thus, all symbolic manipulations are entirely the same as in the previous sections.  For example, the reduced Lagrangian of $L_{\Euler}$ is now given by
\[
\ell_{\Euler}^{(k)}\left(x^{(k)}, \dot{x}^{(k)},  
\xi^{(k)}_{x^{(k)}}\right) = 
\frac{1}{2} \int_{M}{\left\|\mathcal{I}\left(\dot{x}^{(k)} \right) + \xi^{(k)}_{x^{(k)}}\right\|^2 d \mathrm{vol}(m) }.
\]

In particular, we obtain a modest generalization of Theorem \ref{thm:main1}.

\begin{thm} \label{thm:main2}
Let $L:T\SDiff(M) \to \mathbb{R}$ be a $G_\odot^{(k)}$-invariant Lagrangian {\rm (}e.g., $L_{\Euler}$
given in \eqref{eq:KE}{\rm )}.  Let $\mathcal{I} \in \Omega^1( X^{(k)} ;
\mathfrak{X}_{\mathrm{div}}(M) )$ be
a $k^\text{th}$-order interpolation method and 
$\ell^{(k)}:TX^{(k)} \oplus
\tilde{\mathfrak{g}}^{(k)}_\odot \to\mathbb{R}$ the reduced
Lagrangian defined by $\ell^{(k)} \circ \psi_\mathcal{I}: = L$.
Let $\varphi_t\in \SDiff(M)$ be a curve and set 
$\left(x^{(k)}, \dot{x}^{(k)}, \xi^{(k)}_{x^{(k)}}\right)(t) = \Phi^{(k)}_{\inter}( [ \varphi_t , \dot{\varphi}_t])$.
  Then the following are equivalent.
\begin{itemize}
\item[{\rm (i)}] The curve $\varphi_t$ is critical for the action
\[
S = \int_0^1{L(\varphi_t, \dot{\varphi}_t) dt}
\]
with respect to variations $\delta \varphi_t$ with fixed end points.
\item[{\rm (ii)}] The curve $\varphi_t$ satisfies the Euler-Lagrange
equations
\[
\frac{D}{Dt} \left( \pder{L}{\dot{\varphi}_t} \right) -
\pder{L}{\varphi_t} = 0.
\]
with respect to an arbitrary covariant derivative and connection on $T\SDiff(M)$.

\item[{\rm (iii)}] The curve $\left(x^{(k)}, \dot{x}^{(k)}, \xi^{(k)}_{x^{(k)}}\right)(t)$ is critical for
the reduced action
\[
[S] = \int_0^1{\ell^{(k)}\left(x^{(k)}, \dot{x}^{(k)}, \xi^{(k)}_{x^{(k)}}\right)(t) dt}
\]
with respect to arbitrary variations $\delta x^{(k)}(t)$, with
fixed end points, and covariant variations of 
$\left(x^{(k)}, \xi^{(k)}_{x^{(k)}}\right)(t)$
of the form
\[
\delta^\inter \left(x^{(k)}, \xi^{(k)}_{x^{(k)}}\right) = \frac{D\left(x^{(k)}, \eta^{(k)}_{x^{(k)}}\right)}{Dt} +
\left[\left(x^{(k)}, \eta^{(k)}_{x^{(k)}}\right),
\left(x^{(k)}, \xi^{(k)}_{x^{(k)}}\right)\right] + \widetilde{B}\left(\dot{x}^{(k)}, \delta x^{(k)}\right)
\]
for arbitrary curves $\left(x^{(k)}, \eta^{(k)}_{x^{(k)}}\right)(t) \in \tilde{\mathfrak{g}}_\odot^{(k)}$ covering $x(t)$.
\item[{\rm (iv)}]  The curve $\left(x^{(k)}, \dot{x}^{(k)}, \xi^{(k)}_{x^{(k)}}\right)(t)$ satisfies the
Lagrange-Poincar\'{e} equations
\begin{align}
\frac{D}{Dt} \left( \pder{\ell^{(k)}}{\dot{x}^{(k)}}\right) -
\pder{\ell^{(k)}}{x^{(k)}} &= 
\widetilde{B}_{{\partial \ell^{(k)}}/ \partial \xi^{(k)}_{x^{(k)}}}\left(\dot{x}^{(k)}, \cdot \right) 
\quad \text{on} \quad T_{x^{(k)}(t)}X^{(k)}  
\label{eq:hor_LP}\\
\frac{D}{Dt} \left(\pder{\ell^{(k)}}{\xi^{(k)}_{x^{(k)}}} 
\right) &=
-\ad_{\xi^{(k)}_{x^{(k)}}}^*\left(
\pder{\ell^{(k)}}{\xi^{(k)}_{x^{(k)}}}\right) \quad 
\text{on} \quad \tilde{\mathfrak{g}}_\odot^{(k)}(x^{(k)}(t)) \label{eq:ver_LP}
\end{align}
where $\widetilde{B}_{\partial \ell^{(k)}/ 
\partial \xi^{(k)}_{x^{(k)}}}
\left(v_{x^{(k)}},w_{x^{(k)}}\right) =  \left\langle 
\frac{\partial \ell^{(k)}}{\partial \xi^{(k)}_{x^{(k)}}} , 
\widetilde{B}\left(v_{x^{(k)}},w_{x^{(k)}}\right) 
\right\rangle$, for any $v_{x^{(k)}}, w_{x^{(k)}} \in 
T_{x^{(k)}}X^{(k)}$ and all $x^{(k)} \in X^{(k)}$.
\end{itemize}
\end{thm}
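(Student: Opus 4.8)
The plan is to mirror the proof of Theorem \ref{thm:main1} line by line, replacing the evaluation map $\varphi \mapsto \varphi(\odot)$ by the jet operator $\varphi \mapsto j^k_\odot\varphi$ throughout. The structural facts that make this possible have already been recorded: the map $\Psi^{(k)}_\inter$ of \eqref{psi_k} plays the role of the isomorphism of Proposition \ref{prop:isomorphism}, and a $k^{\rm th}$ order interpolation method is precisely a right principal connection on the $G_\odot^{(k)}$-bundle $\SDiff(M) \to X^{(k)}$ (the $k^{\rm th}$ order analogue of Remark \ref{rmk:connections}). First I would verify that $\Psi^{(k)}_\inter$ is a well-defined vector-bundle isomorphism. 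Well-definedness on $(T\SDiff(M))/G_\odot^{(k)}$ follows from right $G_\odot^{(k)}$-invariance exactly as in Proposition \ref{prop:isomorphism}, now using $j^k_\odot(v_\varphi \circ \psi) = j^k_\odot(v_\varphi)$ for $\psi \in G_\odot^{(k)}$ together with the defining property of Definition \ref{def:kth_order_inter} in place of Definition \ref{def_interpolation}; the inverse is $(\Psi^{(k)}_\inter)^{-1}(x^{(k)}, \dot x^{(k)}, \xi^{(k)}_{x^{(k)}}) = [(\inter(\dot x^{(k)}) + \xi^{(k)}_{x^{(k)}}) \circ \varphi]$ for any $\varphi$ with $j^k_\odot\varphi = x^{(k)}$.

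Next I would re-establish the three variation lemmas. Because $\tilde{\mathfrak{g}}_\odot^{(k)}$ remains a genuine vector bundle over $X^{(k)}$ --- its fibers $\mathfrak{g}^{(k)}_{x^{(0)}}$ are vector spaces of divergence-free fields vanishing to order $k$ --- the projectors \eqref{eq:hor_proj}, \eqref{eq:ver_proj}, and hence the covariant derivative of Proposition \ref{prop:cov_der}, transcribe verbatim with $\inter$ now a one-form on $X^{(k)}$. The analogue of Proposition \ref{prop:ver_var} is unchanged: a vertical deformation is $\varphi_{t,\lambda} = \varphi_t \circ \psi_{t,\lambda}$ with $\psi_{t,\lambda} \in G_\odot^{(k)}$, and the computation of $\delta\xi^{(k)}_{x^{(k)}}$ relies only on the Lin constraints of \ref{app:Lin}, which concern variations of $u = \dot\varphi \circ \varphi^{-1}$ in $\SDiff(M)$ and are therefore insensitive to the choice of isotropy subgroup. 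Likewise the reduced curvature $\widetilde B$ of Proposition \ref{prop:hor_var} is obtained from the same identity \eqref{eq:da} and Proposition \ref{prop:cov_der}, now read on $X^{(k)}$. Collecting these gives the $k^{\rm th}$ order version of the admissible variations \eqref{good_variations}.

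With the variation structure in hand, the four equivalences go through as before. The equivalence (i) $\Leftrightarrow$ (ii) is the intrinsic Euler--Lagrange derivation and is independent of the reduction. For (i) $\Leftrightarrow$ (iii) one uses that $S$ and $[S]$ agree along $\Psi^{(k)}_\inter$-related curves and that an arbitrary variation $\delta\varphi_t = (\eta^{(k)}_{x^{(k)}} + \inter(\delta x^{(k)})) \circ \varphi_t$ induces exactly the covariant variation of (iii). Finally (iii) $\Leftrightarrow$ (iv) is the integration-by-parts computation of Theorem \ref{thm:main1}: split $\delta[S]$ into the $\eta^{(k)}_{x^{(k)}}$-part and the $\delta x^{(k)}$-part, invoke the definition of the dual covariant derivative on $\tilde{\mathfrak{g}}_\odot^{(k)*}$, the equality of mixed covariant partials \eqref{eq:mixed_partials} on $X^{(k)}$, and the contraction identity for $\widetilde B_{\partial\ell^{(k)}/\partial\xi^{(k)}_{x^{(k)}}}$, to read off \eqref{eq:ver_LP} and \eqref{eq:hor_LP}. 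Here the covariant derivative on $TX^{(k)}$ is the Levi-Civita connection of the metric $\langle v, w\rangle := \int_M \langle \inter(v), \inter(w)\rangle_M\, d\mathrm{vol}(m)$ on $X^{(k)}$, the evident generalization of the metric used in the previous section.

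I expect the only genuine subtlety --- and the point worth flagging explicitly --- to be that $T^{(k)}M$, and hence $X^{(k)}$, is not a vector bundle for $k > 1$, so one must take care that every construction borrowed from \S\ref{sec:preliminary} is applied to an object that really is a vector bundle. This is harmless here: the reduction machinery only ever uses the finite-dimensional Lie group structure of $\mathcal{J}^k_\odot(G_\odot) = G_\odot/G_\odot^{(k)}$ together with the connection-form reading of $\inter$, and the vector-bundle structure of the target $TX^{(k)} \oplus \tilde{\mathfrak{g}}_\odot^{(k)}$, both of which have been established. The base $X^{(k)}$ enters only as a smooth manifold carrying a tangent bundle and the interpolation one-form, so no linear structure on $X^{(k)}$ itself is ever required.
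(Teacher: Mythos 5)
Your proposal is correct and follows exactly the paper's own route: the published proof consists precisely of the observation that the argument of Theorem \ref{thm:main1} carries over verbatim upon replacing the evaluation $x = \varphi(\odot)$ by the jet $x^{(k)} = j^k_\odot\varphi$, which is the substitution you carry out step by step. Your additional remarks---verifying $\Psi^{(k)}_\inter$ explicitly, noting that the Lin constraints and curvature identity are insensitive to the choice of isotropy subgroup, and flagging that $X^{(k)}$ is only a manifold (not a vector bundle) for $k>1$ but that no linear structure on the base is ever used---are sound elaborations of details the paper leaves implicit.
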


\begin{proof}
	The proof of theorem \ref{thm:main2} is nearly identical to the proof of Theorem \ref{thm:main1}.  The symbolic manipulations are the same upon substituting the expression `$x = \varphi(\odot)$' with the expression `$x = j^k_{\odot}( \varphi)$' into the proof of Theorem \ref{thm:main1}.
\end{proof}

When $k > 0$, the space $X^{(k)}$ stores particle positions, as well as $k^\text{th}$ order jet-data about the particle positions.
Moreover, the infinite dimensional vector bundle $\tilde{\mathfrak{g}}_\odot^{(k)}$ stores vector fields which vanish at the particle positions up to $k^\text{th}$ order.
Specifically, the \emph{horizontal equation} \eqref{eq:hor_LP} evolves the momenta of the particles and their jet-data, while the \emph{vertical equation} \eqref{eq:ver_LP} evolves the momenta conjugate to $(x, \xi^{(k)}_{x^{(k)}})$.

\begin{remark}{\rm 
 The vector field $\xi^{(k)}_{x^{(k)}}$ can be viewed as a residual if we interpret $\inter(\cdot)$ as an estimate of the fluid motion.  This interpretation of $\xi^{(k)}_{x^{(k)}}$ as a residual of an estimate will become more important when we discuss how $\inter$ induces particle methods for Lagrangian systems on $\SDiff(M)$.  In particular, the error analysis of these methods can be performed through this interpretation.}
 \end{remark}

\begin{remark} \label{rmk:higher_order}
{\rm
If $L$ is a $G_\odot$-invariant Lagrangian, then there is an extra symmetry in the horizontal equations \eqref{eq:hor_LP}.
In particular, the group $\mathcal{J}_\odot^k(G_\odot) \equiv G_\odot / G^{(k)}_\odot$ is a residual symmetry left over by a $G^{(k)}_\odot$ reduction.
For example, if $M = \mathbb{R}^d$ and $k=1$ we can identify $\mathcal{J}_\odot^1(G_\odot)$ with $\SL(d, \mathbb{R}) ^N$ by choosing unit volume frames above $\odot_1, \dots, \odot_N$.
Therefore, if the Lagrangian is independent of the orientation and shape of the particles, we get a right $\SL(d, \mathbb{R})^N$ symmetry which has yet to be ``quotiented away''.
This extra symmetry is attached to the particles, and results in conserved momenta as a consequence of Noether's theorem.
We will discuss this more in \S \ref{sec:kelvin} in the context of a particle method.}
\end{remark}

\section{Particle Methods} \label{sec:particle_methods}
We hope that a computational scientist or engineer interested
in particle methods for fluids finds the Lagrange-Poincar\'{e}
equations of Theorems \ref{thm:main1} and \ref{thm:main2} thought provoking;  we have deliberately presented
our approach using the ``interpolation'' point of view to
encourage such a reaction from practitioners.  In this section
we describe methods by which one can construct particle
methods.  This is not to say that these are the only methods
one can do!  Study along these lines is wide open for further
exploration and implementation.

In the previous section we considered the Lie subgroup
$G_\odot^{(k)} \subset \SDiff(M)$. The quotient $X^{(k)} :=\mathcal{J}_\odot^{k}( \SDiff(M)) = \SDiff(M) / G_\odot^{(k)}$ is a configuration manifold of particles which carry Taylor-series data.  The Lie algebra of $G^{(k)}_\odot$, which we denote $\mathfrak{g}_\odot^{(k)}$, consists of vector fields which vanish to $k^{\rm th}$ order at each of the points in $\odot$.  We then can define the associated bundle $\tilde{\pi}^{(k)}: \tilde{\mathfrak{g}}_\odot^{(k)} \to X^{(k)}$ by defining the fiber over $x^{(k)} \in X^{(k)}$ by
\[
	\tilde{\mathfrak{g}}_\odot^{(k)}\left(x^{(k)}\right) = \{ u \in \mathfrak{X}_{\mathrm{div}}(M) \mid j^k_{x^{(0)}}( u) = 0 \}, \quad x^{(0)} = \pi_0^{(k)}\left(x^{(k)}\right).
\]
In Theorem \ref{thm:main2}, we found equations of motion on the space $TX^{(k)} \oplus \tilde{\mathfrak{g}}_\odot$ (the Lagrange-Poincar\'{e} equations).  These equations of motion are given by a horizontal equation, \eqref{eq:hor_LP}, which describes the dynamics on $TX^{(k)}$, and a vertical equation, \eqref{eq:ver_LP}, which describes the dynamics of the $\tilde{\mathfrak{g}}_\odot^{(k)}$-component.
In particular, \eqref{eq:hor_LP} is similar to an 
Euler-Lagrange equation except for the coupling term involving the curvature on the right hand side and the 
$(x^{(k)},\xi^{(k)}_{x^{(k)}})$-dependence of particle momenta, $\partial \ell^{(k)}/\partial x^{(k)}$.
Therefore, it is tempting to find a way of ignoring the
$\xi^{(k)}_{x^{(k)}}$ and curvature terms in the hope of deriving a true Euler-Lagrange equation on the finite dimensional configuration manifold $X^{(k)}$.
One way to ignore the $\xi^{(k)}_{x^{(k)}}$-dependence is to non-holonomically constrain it to be $0$.  That means constraining the spatial velocity $u = \dot{\varphi}\circ \varphi^{-1}$ to the range of $\inter$.
Thus we introduce the constraint distribution $\Delta_{\inter} \subset T\SDiff(M)$ whose fiber over 
$\varphi \in \SDiff(M)$ is
\begin{align}
\Delta_{\inter}(\varphi) := 
\left.\left\{ \inter\left(\dot{x}^{(k)}\right) \circ \varphi \;\right|\; \dot{x}^{(k)} \in T_{x^{(k)}} X^{(k)}, x^{(k)} = j^k_\odot \varphi \right\}. \label{eq:constraint_distribution}
\end{align}
By construction, the non-holonomic force which keeps
$\dot{\varphi}$ in $\Delta_\mathcal{I}$ is such that the
vertical component $\xi^{(k)}_{x^{(k)}} = 
\dot{\varphi} \circ\varphi^{-1} -
\inter\left(\dot{x}^{(k)}\right)$ vanishes, where 
$\dot{x}^{(k)} = j^{(k)}_\odot(\dot{\varphi})$.  Thus, the dynamics
on $X^{(k)}$ determines the dynamics completely.  This becomes evident in the following theorem.  In particular, items (ii) and (iv) of Theorem \ref{thm:particle_method} can be seen as spatially $k^{\rm th}$ order accurate particle methods.

\begin{thm} \label{thm:particle_method}
Let $L:T\SDiff(M) \to \mathbb{R}$ be a 
$G_{\odot}^{(k)}$-invariant Lagrangian and 
$\mathcal{I} \in \Omega^1( X^{(k)} ; \mathfrak{X}_{\mathrm{div}}(M) )$ 
a $k^{\rm th}$-order interpolation method.  Let 
$\Delta_{\inter} \subset T\SDiff(M)$ be the constraint 
distribution given in \eqref{eq:constraint_distribution} 
and $F:TX^{(k)} \to T^{\ast}X^{(k)}$ the force defined by
\begin{align}
\left\langle F\left(x^{(k)},\dot{x}^{(k)}\right) , 
\delta x^{(k)} \right\rangle
= -\left. \frac{d}{d\epsilon} \right|_{\epsilon = 0 } 
\ell^{(k)}\left( x^{(k)},\dot{x}^{(k)}, 
 \epsilon \tilde{B}\left(\dot{x}^{(k)},\delta x^{(k)}\right)\right). \label{eq:force}
\end{align}
Finally, let $\varphi_t \in \SDiff(M)$ be a curve, set 
$\left(x^{(k)},\dot{x}^{(k)}, \xi^{(k)}_{x^{(k)}}\right)(t) = \Phi_{\inter}( [(\varphi_t, \dot{\varphi}_t)] ) \in TX^{(k)} \oplus \tilde{\mathfrak{g}}_\odot^{(k)}$, and define  the Lagrangian $L_{X^{(k)} }:TX^{(k)} \to \mathbb{R}$ by $L_{X^{(k)}}\left(x^{(k)},\dot{x}^{(k)} \right) = 
\ell^{(k)}\left(x^{(k)},\dot{x}^{(k)},0 \right)$.  Then the following are equivalent:
\begin{itemize}
\item[{\rm (i)}] $\varphi_t$ satisfies the constrained Euler-Lagrange equation
\[
\frac{D}{Dt} \left( \pder{L}{\dot{\varphi}_t } \right) - 
\pder{L}{\varphi_t} \in \Delta_\inter^\circ \quad , \quad 
\dot{\varphi}_t \in \Delta_{\inter}
\]
where $\Delta_{\inter}^{\circ} \subset T^{\ast}\SDiff(M)$ is 
the annihilator of $\Delta_{\inter}$. 
\item[{\rm (ii)}] $\xi^{(k)}_{x^{(k)}}(t) = 0$ and 
$x^{(k)}(t)$ satisfies the Lagrange-d'Alembert equations
\begin{align}
\frac{D}{Dt} \left( \pder{L_{X^{(k)}}}{\dot{x}^{(k)}}\right) - 
\pder{L_{X^{(k)}}}{x^{(k)}} = F.  
\label{eq:LDA}
\end{align}
\item[{\rm (iii)}] $\dot{\varphi}_t \in \Delta_{\inter}$ 
and $\varphi_t$ satisfy the variational principle
\[
\delta \int_0^1{ L(\varphi, \dot{\varphi} ) dt} = 0
\]	
with respect to variations $\delta \varphi_t \in 
\Delta_{\inter}$ which vanish at the endpoints.
\item[{\rm (iv)}] $\xi^{(k)}_{x^{(k)}}(t) = 0$ and 
$x^{(k)}(t)$ satisfies the Lagrange-d'Alembert principle
\[
\delta \int_0^1{L_{X^{(k)}}\left(x^{(k)},
\dot{x}^{(k)}\right)dt} 
= \int_0^1 \left\langle F\left(x^{(k)},\dot{x}^{(k)}\right) , 
\delta x^{(k)} \right\rangle dt
\]
with respect to arbitrary variations $\delta x^{(k)}$ with fixed endpoints.
  \end{itemize}
\end{thm}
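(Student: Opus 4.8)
The plan is to organize the four statements into two levels joined by the reduction isomorphism $\Psi^{(k)}_\inter$ of \eqref{psi_k}: statements (i) and (iii) live on $T\SDiff(M)$, statements (ii) and (iv) live on $TX^{(k)}$, and within each level one item is an equation of motion and the other a variational (Lagrange--d'Alembert) principle. I would prove the three equivalences (i)$\Leftrightarrow$(iii), (iv)$\Leftrightarrow$(ii), and (iii)$\Leftrightarrow$(iv), which together close the cycle. The first two are ``vertical'' pairs handled by the same integration-by-parts machinery already used in Theorem \ref{thm:main1}; the third is the genuine reduction bridge and is where the constraint distribution, the curvature $\widetilde B$, and the force $F$ must be reconciled.

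For (i)$\Leftrightarrow$(iii) I would run the intrinsic Lagrange--d'Alembert argument directly on $\SDiff(M)$, exactly as in the equivalence of (i) and (ii) in Theorem \ref{thm:main1}. Using the mixed-partials identity \eqref{eq:mixed_partials} and integrating by parts gives
\[
\delta\int_0^1 L(\varphi_t,\dot\varphi_t)\,dt = \int_0^1\left\langle \pder{L}{\varphi_t} - \frac{D}{Dt}\pder{L}{\dot\varphi_t},\, \delta\varphi_t\right\rangle dt
\]
for variations vanishing at the endpoints. Restricting $\delta\varphi_t$ to $\Delta_\inter$, this integral vanishes for all admissible variations exactly when the Euler--Lagrange residual $\frac{D}{Dt}\pder{L}{\dot\varphi_t}-\pder{L}{\varphi_t}$ annihilates $\Delta_\inter$, i.e. lies in $\Delta_\inter^\circ$; together with the constraint $\dot\varphi_t\in\Delta_\inter$ carried in both items, this is (i)$\Leftrightarrow$(iii). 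The equivalence (iv)$\Leftrightarrow$(ii) is the identical computation on the finite-dimensional manifold $X^{(k)}$ with the reduced Lagrangian $L_{X^{(k)}}$ and external force $F$: integrating $\delta\int_0^1 L_{X^{(k)}}\,dt$ by parts and comparing with $\int_0^1\langle F,\delta x^{(k)}\rangle\,dt$ for arbitrary endpoint-fixed $\delta x^{(k)}$ yields the forced equations \eqref{eq:LDA}.

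The crux is (iii)$\Leftrightarrow$(iv). First I would observe that, by Remark \ref{rmk:connections}, $\Delta_\inter$ is exactly the horizontal distribution of the principal connection associated to $\inter$, so the constraint $\dot\varphi_t\in\Delta_\inter$ is equivalent to $\xi^{(k)}_{x^{(k)}} = \dot\varphi_t\circ\varphi_t^{-1} - \inter(\dot{x}^{(k)}) = 0$ along the base curve, matching the condition $\xi^{(k)}_{x^{(k)}}(t)=0$ in (ii) and (iv). Next, an admissible variation $\delta\varphi_t\in\Delta_\inter$ is precisely a horizontal variation in the sense of \eqref{eq:hor_var}, so its vertical part vanishes ($\eta=0$) and the higher-order analogue of Proposition \ref{prop:hor_var} gives the induced covariant variation $\delta^\inter(x^{(k)},\xi^{(k)}) = \widetilde B(\dot{x}^{(k)},\delta x^{(k)})$. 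Expanding $\delta\int_0^1 L\,dt = \delta\int_0^1 \ell^{(k)}\,dt$ along the constrained curve via \eqref{eq:df=dmf+def} and \eqref{eq:mixed_partials}, and using $\ell^{(k)}(x^{(k)},\dot{x}^{(k)},0)=L_{X^{(k)}}(x^{(k)},\dot{x}^{(k)})$, the $TX^{(k)}$-terms assemble into $\delta\int_0^1 L_{X^{(k)}}\,dt$, while the $\tilde{\mathfrak g}^{(k)}_\odot$-term becomes $\langle \pder{\ell^{(k)}}{\xi^{(k)}_{x^{(k)}}}, \widetilde B(\dot{x}^{(k)},\delta x^{(k)})\rangle$ evaluated at $\xi^{(k)}=0$, which equals $-\langle F,\delta x^{(k)}\rangle$ by the definition \eqref{eq:force}. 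Hence $\delta S=0$ becomes precisely the Lagrange--d'Alembert principle of (iv).

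The main obstacle is the bookkeeping in this last step: one must verify that restricting to $\Delta_\inter$ genuinely suppresses the vertical variation direction $(x,\eta_x)$ of Proposition \ref{prop:ver_var}, so the only contribution beyond $\delta\int_0^1 L_{X^{(k)}}\,dt$ is the curvature term, and that this term reproduces the force $F$ of \eqref{eq:force} (up to the paper's sign convention). Conceptually this is the assertion that the nonholonomic constraint does \emph{not} impose the vertical equation \eqref{eq:ver_LP}; the vertical momentum balance is instead absorbed entirely into the constraint reaction force living in $\Delta_\inter^\circ$, which is why (ii) retains only the horizontal equation \eqref{eq:LDA} supplemented by $\xi^{(k)}_{x^{(k)}}=0$. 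Two minor checks accompany this: that $F$ is well defined, i.e. linear in $\delta x^{(k)}$, which follows since $\widetilde B(\dot{x}^{(k)},\cdot)$ is linear and the fiber derivative of $\ell^{(k)}$ in the $\xi$-direction is linear; and that $\pder{L_{X^{(k)}}}{x^{(k)}}$ and $\pder{L_{X^{(k)}}}{\dot{x}^{(k)}}$ coincide with the restrictions of the corresponding covariant derivatives of $\ell^{(k)}$ to the zero section $\xi^{(k)}=0$, which holds because the connection on $TX^{(k)}\oplus\tilde{\mathfrak g}^{(k)}_\odot$ is a direct sum and its zero section is parallel.
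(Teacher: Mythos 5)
Your proposal is correct and follows essentially the same route as the paper: the cycle is closed through the pairs (i)$\Leftrightarrow$(iii) and (ii)$\Leftrightarrow$(iv), which are the standard Lagrange--d'Alembert equivalences (the paper simply cites \cite[Theorem 5.2.2]{bloch2003} where you sketch the integration-by-parts argument), and the bridge (iii)$\Leftrightarrow$(iv) is handled exactly as in the paper---via $\Psi^{(k)}_{\inter}$, the observation that $\dot{\varphi}\in\Delta_{\inter}$ forces $\xi^{(k)}_{x^{(k)}}=0$, the identification of admissible variations with horizontal ones so that $\delta^{\inter}(x^{(k)},\xi^{(k)}_{x^{(k)}})=\widetilde{B}(\dot{x}^{(k)},\delta x^{(k)})$, and the matching of the resulting curvature term with $-\langle F,\delta x^{(k)}\rangle$ from \eqref{eq:force}, with the converse obtained by reconstructing $\delta\varphi=\inter(\delta x^{(k)})\circ\varphi$ from an arbitrary $\delta x^{(k)}$. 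Your sign bookkeeping and the reversibility step agree with the paper's proof.
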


\begin{proof}
The equivalence of (i) with (iii) and (ii) with (iv) is 
standard~\cite[Theorem 5.2.2]{bloch2003}. The equivalence 
of (iii) with (iv) is a result of the change of variables 
$\left(x^{(k)},\dot{x}^{(k)},\xi^{(k)}_{x^{(k)}}\right)(t) = 
\Psi^{(k)}_{\inter}\left(\left[(\varphi_t ,\dot{\varphi}_t)
\right]\right)$ (see \eqref{psi_k}).  In particular, assume (iii).  Then 
$(\varphi_t, \dot{\varphi}_t) \in \Delta_{\inter}$ and by  \eqref{eq:constraint_distribution} we observe
(suppressing the $t$-dependence),
\[
\xi^{(k)}_{x^{(k)}} 
\stackrel{\eqref{psi_k}}= 
\dot{\varphi} \circ \varphi^{-1} 
- \inter\left(\dot{x}^{(k)}\right) \stackrel{\eqref{eq:constraint_distribution}}{=} 0.
  \]
Moreover, if we consider a variation $\delta \varphi \in 
\Delta_{\inter}$, then the corresponding variation of 
$x^{(k)}$ is $\delta x^{(k)} = \delta j^k_\odot (\varphi)$.  The covariant variation of 
$(x^{(k)} , \xi^{(k)}_{x^{(k)}} )$ is 
$\delta^{\inter}(x^{(k)}, \xi^{(k)}_{x^{(k)}} ) = 
\tilde{B}\left(\dot{x}^{(k)} , \delta x^{(k)}\right)$ 
(by the obvious generalization of Proposition \ref{prop:hor_var} to $k^{\rm th}$ order jets).  By substitution and the defining relation $\ell^{(k)} \circ 
\Psi^{(k)}_\mathcal{I} = L$, we find
\[
0 = \delta \int_0^1 L(\varphi, \dot{\varphi} ) dt = 
\delta \int_0^1 \ell^{(k)}\left(x^{(k)},\dot{x}^{(k)},
\xi^{(k)}_{x^{(k)}}\right) dt .
\]
However,along a curve where 
$\xi^{(k)}_{x^{(k)}}(t) = 0$ the right hand side takes the form
\[
\delta \int_0^1 L_{X^{(k)}}(x,\dot{x}) dt  - 
\int_0^1\left\langle F\left(x^{(k)},\dot{x}^{(k)}\right) , \delta x^{(k)} \right\rangle dt
\]
by \eqref{eq:force}. Thus (iii) implies (iv).  

To prove the converse, note that the computations above
can be reversed by deriving an arbitrary variation 
$\delta \varphi \in \Delta_{\inter}$ from an arbitrary variation $\delta x^{(k)}$ via $\delta \varphi = 
\inter\left(\delta x^{(k)}\right) \circ \varphi$.  Thus (iii) and (iv) are equivalent.
\end{proof}

Theorem \ref{thm:particle_method}(ii) suggests a particle method obtained by solving a set of Lagrange-d'Alembert equations on the configuration manifold $X^{(k)}$ which can be lifted to the solution of a non-holonomically constrained Euler-Lagrange equation on $\SDiff(M)$.
The solution of constrained Euler-Lagrange equations differs from the solution of the unconstrained equations on $\SDiff(M)$ by an amount proportional to the magnitude of the constraint force required to keep $\dot{\varphi}$ inside $\Delta_{\inter}$.
This is discussed further in the following remark.
\begin{remark}{\rm 
If $M= \mathbb{R}^d$, the constraint force is given by the (co)vector field $\left(\inter(\dot{x}) \cdot \nabla\right)(\inter(\dot{x}))$.
Taking the norm of this quantity and using Grownwall's inequality, one can find an error bound and a stopping criterion for our particle method.} \quad $\lozenge$
\end{remark}

While Theorem \ref{thm:particle_method}(ii) can be used to construct a particle method for a fluid, the external force $F$ is potentially disturbing.
For example, certain guarantees such as conservation of energy, which hold for the Euler-Lagrange equations, do not hold for Lagrange-d'Alembert equations. 
In the case where a mechanical connection can be associated to the Lagrangian (see \S\ref{ex:interpolation_method}), this force, $F$, vanishes.
We will prove this in Corollary \ref{cor:particle_method}.
Unfortunately, there does not exist a mechanical connection in the case of an Euler fluid.
However, one could consider perturbing the Lagrangian $L_{\Euler}$ by a small parameter $\alpha > 0$ 
to obtain a new Lagrangian $L_\alpha$ which does exhibit a mechanical connection.

The existence of the mechanical connection is equivalent to the existence of an interpolation method $\mathcal{I} \in \Omega^1( X^{(k)} ; \mathfrak{X}_{\rm div}(M) )$ satisfying 
\begin{equation}
\label{mechanical_interpolation}
\left\langle \mathcal{I}\left(x^{(k)}, \dot{x}^{(k)} \right), 
\eta^{(k)}_{x^{(0)}} \right\rangle_\alpha = 0, \quad 
\text{for all} \quad \eta^{(k)}_{x^{(0)}} \in 
\mathfrak{g}^{(k)}_{x^{(0)}}\,,
\end{equation}
where $\left\langle\cdot ,\cdot \right\rangle_\alpha$ is a
weak inner product depending on a parameter $\alpha>0$ which
is a perturbation of the usual $L^2$ inner product on 
$\mathfrak{X}_{\rm div}(M)$. For example this can be the
$H^1$-$\alpha$-metric used in $\alpha$-models and the
Camassa-Holm equation (\cite{HoMaRa1998,foias2001navier}).
We call such an interpolation method $\mathcal{I}$ a \emph{mechanical interpolation method}.
In such a case, if $L$ is a classical Lagrangian, the
difference between the kinetic energy and a potential, the reduced Lagrangian takes the form
\[
\ell^{(k)}_\alpha\left(x^{(k)},\dot{x}^{(k)}, 
\xi^{(k)}_{x^{(k)}}\right) = \frac{1}{2} 
\left\| \inter(\dot{x}^{(k)}) \right\|^2_\alpha + 
\frac{1}{2} \left\| \xi^{(k)}_{x^{(k)}} \right\|_\alpha^2 
- U\left(x^{(k)}\right).
\]
Moreover, we find that
$\Delta_{\mathcal{I}}$ is invariant under the Euler-Lagrange
 flow as a result of Noether's theorem.

\begin{lem} \label{lem:Noether}
Assume the setup of Theorem \ref{thm:particle_method} and that $L$ is the difference between a kinetic minus a potential energy with respect to a right-invariant weak
Riemannian metric $\lb \cdot , \cdot \rb_\alpha : T \SDiff(M) \oplus T \SDiff(M) \to \mathbb{R}$.
Moreover, assume there exists a $k^{\rm th}$ order interpolation method which is mechanical with respect to the metric i.e., \eqref{mechanical_interpolation} holds.
Then the Lagrangian momentum map $J_L : T \SDiff(M) \to \left(\mathfrak{g}^{(k)}_\odot\right)^{\ast}$ is given by
\[
\left\langle J_L( \varphi, \dot{\varphi}) , \eta \right\rangle = \left\langle\dot{\varphi} \circ \varphi^{-1} - \mathcal{I}( j^k_\odot( \dot{\varphi} ) ) , \varphi_*\eta 
\right\rangle_{\alpha}\,, \quad  \text{for all} \quad \eta \in \mathfrak{g}^{(k)}_\odot
\]
and $\Delta_{\mathcal{I}} = J_L^{-1}(0)$.  By Noether's theorem $\Delta_{\mathcal{I}}$ is an invariant submanifold with respect to the flow of the Euler-Lagrange equations on $\SDiff(M)$.
\end{lem}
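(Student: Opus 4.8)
The plan is to compute the Lagrangian momentum map straight from its definition and then identify its zero level set with the constraint distribution. For the right action of $G_\odot^{(k)}$ on $\SDiff(M)$ by composition (cf.\ remark \ref{rmk:connections}), the infinitesimal generator of $\eta \in \mathfrak{g}_\odot^{(k)}$ is the vector field $\eta_{\SDiff(M)}(\varphi) = T\varphi \circ \eta$ covering $\varphi$, and the Lagrangian momentum map is $\left\langle J_L(\varphi,\dot\varphi),\eta\right\rangle = \left\langle \mathbb{F}L(\varphi,\dot\varphi), \eta_{\SDiff(M)}(\varphi)\right\rangle$, where $\mathbb{F}L$ denotes the fiber derivative of $L$. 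Since $L$ is the kinetic energy of $\lb\cdot,\cdot\rb_\alpha$ minus a potential depending only on the base point, its fiber derivative is $\left\langle \mathbb{F}L(\varphi,\dot\varphi), w_\varphi\right\rangle = \lb \dot\varphi, w_\varphi\rb_\alpha$; hence $\left\langle J_L(\varphi,\dot\varphi),\eta\right\rangle = \lb \dot\varphi, T\varphi\circ\eta\rb_\alpha$. Right-invariance of the metric then turns this into $\lb \dot\varphi\circ\varphi^{-1}, (T\varphi\circ\eta)\circ\varphi^{-1}\rb_\alpha$, which equals $\lb \dot\varphi\circ\varphi^{-1}, \varphi_*\eta\rb_\alpha$ via the identity $(T\varphi\circ\eta)\circ\varphi^{-1} = \varphi_*\eta$.

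The key step is to insert the interpolation term at no cost. Because $\eta$ vanishes to order $k$ at $\odot$, naturality of jets under composition shows that $\varphi_*\eta$ vanishes to order $k$ at $x^{(0)} := \varphi(\odot)$, i.e.\ $\varphi_*\eta \in \mathfrak{g}^{(k)}_{x^{(0)}}$. Applying the mechanical-interpolation identity \eqref{mechanical_interpolation} with $x^{(k)} = j^k_\odot\varphi$ and $\dot{x}^{(k)} = j^k_\odot(\dot\varphi)$ gives $\lb \inter(j^k_\odot(\dot\varphi)), \varphi_*\eta\rb_\alpha = 0$, so this term may be subtracted freely, yielding the asserted formula for $J_L$.

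To prove $\Delta_{\inter} = J_L^{-1}(0)$, I would first note that $\eta \mapsto \varphi_*\eta$ is a linear isomorphism of $\mathfrak{g}_\odot^{(k)}$ onto $\mathfrak{g}^{(k)}_{x^{(0)}}$ (with inverse $(\varphi^{-1})_*$). Writing $\xi^{(k)}_{x^{(k)}} := \dot\varphi\circ\varphi^{-1} - \inter(j^k_\odot(\dot\varphi))$ for the vertical component supplied by $\Psi^{(k)}_\inter$ in \eqref{psi_k}, the momentum-map formula shows that $(\varphi,\dot\varphi)\in J_L^{-1}(0)$ if and only if $\lb \xi^{(k)}_{x^{(k)}}, \zeta\rb_\alpha = 0$ for all $\zeta\in\mathfrak{g}^{(k)}_{x^{(0)}}$. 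Since $\Psi^{(k)}_\inter$ identifies $\xi^{(k)}_{x^{(k)}}$ as an element of $\tilde{\mathfrak{g}}^{(k)}_\odot(x^{(k)}) = \mathfrak{g}^{(k)}_{x^{(0)}}$, one may take $\zeta = \xi^{(k)}_{x^{(k)}}$, and positive-definiteness of the weak metric forces $\xi^{(k)}_{x^{(k)}} = 0$; by \eqref{eq:constraint_distribution} this is exactly the condition $\dot\varphi\in\Delta_\inter(\varphi)$, and the reverse inclusion follows from the same computation. Finally, since $L$ is $G_\odot^{(k)}$-invariant, Noether's theorem guarantees that $J_L$ is constant along solutions of the Euler-Lagrange equations, so its level set $J_L^{-1}(0) = \Delta_\inter$ is an invariant submanifold of the flow.

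I expect the main obstacle to be the jet-naturality claim that pushforward by $\varphi$ carries $\mathfrak{g}_\odot^{(k)}$ isomorphically onto $\mathfrak{g}^{(k)}_{x^{(0)}}$ while preserving the order of vanishing at the transported points; this is precisely what makes the mechanical-interpolation orthogonality \eqref{mechanical_interpolation} applicable to $\varphi_*\eta$, and it must be checked carefully (e.g.\ in a chart via the higher-order chain rule, or abstractly from the intertwining $j^k_{x^{(0)}}(\varphi_*\eta) = T^{(k)}\varphi \circ j^k_\odot(\eta)$). Everything else is bookkeeping with right-invariance of the metric and the identifications already established in Proposition \ref{prop:isomorphism} and its $k$th-order analogue.
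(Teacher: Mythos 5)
Your proposal is correct and follows essentially the same route as the paper's proof: compute $J_L$ from the infinitesimal generator $T\varphi\circ\eta$ and right-invariance of $\lb\cdot,\cdot\rb_\alpha$, insert the $\inter(j^k_\odot(\dot\varphi))$ term for free via the mechanical-interpolation orthogonality \eqref{mechanical_interpolation} applied to $\varphi_*\eta\in\mathfrak{g}^{(k)}_{x^{(0)}}$, identify $J_L^{-1}(0)$ with $\Delta_\inter$, and invoke Noether. The only difference is that you expand the paper's ``by inspection'' step — testing $\xi^{(k)}_{x^{(k)}}$ against itself and using positive-definiteness of the weak metric — which is a welcome but not substantively different elaboration.
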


\begin{proof}
  The right infinitesimal generator of $\eta \in \mathfrak{g}^{(k)}_\odot$ on $\SDiff(M)$ is given by $\varphi \in \SDiff(M) \mapsto T \varphi \circ  \eta$.  Using the definition of the Lagrangian momentum map (see, e.g., 
\cite[Corollary 4.2.14]{FOM}) and the right invariance of $\lb \cdot , \cdot \rb_{\alpha}$ we find
\begin{align*}
	\lb J_L( \varphi, \dot{\varphi} ) , \eta \rb = \lb \dot{\varphi} , T \varphi \circ  \eta \rb_{\alpha}
	&= \lb \dot{\varphi} \circ \varphi^{-1} , \varphi_* \eta \rb_{\alpha}.
\end{align*}
By assumption, $\mathcal{I}$ maps $T_{x^{(k)}}X^{(k)}$ to
the orthogonal of $\mathfrak{g}^{(k)}_{x^{(0)}}$, where
$x^{(k)} = j^k_\odot( \varphi)$ and $x^{(0)} = \pi^{(k)}_0\left(x^{(k)}\right)$. Since $\varphi_*\eta \in 
\mathfrak{g}^{(k)}_{x^{(0)}}$, the formula above implies
\[
\left\langle J_L( \varphi, \dot{\varphi} ) , 
\eta \right\rangle = 
\left\langle \dot{\varphi} \circ \varphi^{-1} - 
\mathcal{I}( j^k_\odot( \dot{\varphi}) ), \varphi_* \eta \right\rangle_{\alpha}.
\]
Thus, by inspection, $\Delta_{\mathcal{I}} = J_L^{-1}( 0 )$.
By Noether's theorem, $J_L$ is conserved under the flow of the Euler-Lagrange equations.
Thus if $\varphi_t$ satisfies the Euler-Lagrange equations and $\dot{\varphi}_0 \in \Delta_{\mathcal{I}}$, then $\dot{\varphi}_t \in \Delta_{\mathcal{I}}$ for all time.
\end{proof}

In other words, given a mechanical interpolation method, the constraint force to keep trajectories in $\Delta_{\mathcal{I}}$ vanishes.
In particular, Lemma \ref{lem:Noether} provides a new version of Theorem \ref{thm:particle_method}.

\begin{cor}\label{cor:particle_method}
Assume the setup of Theorem \ref{thm:particle_method} and Lemma \ref{lem:Noether}.
The reduced Lagrangian takes the form
\begin{align}
\ell^{(k)}\left(x^{(k)},\dot{x}^{(k)}, \xi^{(k)}_{x^{(k)}}\right) = \frac{1}{2} \left\| \inter\left(\dot{x}^{(k)} \right) \right\|_{X^{(k)}}^2 + \frac{1}{2} 
\left\| \xi^{(k)}_{x^{(k)}} \right\|^2_{
\tilde{ \mathfrak{g} }_{\odot}^{(k)} } - 
U\left(x^{(k)} \right). 
\label{eq:sum_of_squares}
\end{align}
Finally, let $L_{X^{(k)}} = 
\left. \ell^{(k)} \right|_{TX^{(k)}}$.  Then the external force $F$ of \eqref{eq:force} is zero and the following are equivalent:
\begin{itemize}
\item[{\rm (i)}] $\dot{\varphi}_0 \in \Delta_{\mathcal{I}}$ and $\varphi_t$ satisfies the 
Euler-Lagrange equation
\[
\frac{D}{Dt} \left(\pder{L}{\dot{\varphi}_t } \right) - \pder{L}{\varphi_t} = 0;
\]
\item[{\rm (ii)}] $\xi^{(k)}_{x^{(k)}}(t) = 0$ and 
$x^{(k)}(t)$ satisfies the Euler-Lagrange equations
\[
\frac{D}{Dt} \left( \pder{L_{X^{(k)}}}{\dot{x}^{(k)}}\right) - \pder{L_{X^{(k)}} }{x^{(k)}} = 0;
	\]
\item[{\rm (iii)}] $\dot{\varphi}_0 \in \Delta_{\inter}$ 
and the path $\varphi_t$ is critical for the action
\[
S[ \varphi] =  \int_0^1{ L(\varphi, \dot{\varphi} ) dt} 
\]	
with respect to variations $\delta \varphi_t \in T\SDiff(M)$ which vanish at the endpoints;
\item[{\rm (iv)}] $\xi^{(k)}_{x^{(k)}}(t) = 0$ and the path
$x^{(k)}(t)$ is critical for the action
\[
S \left[x^{(k)} \right] = 
\int_0^1{ L_{X^{(k)}}\left(x^{(k)},\dot{x}^{(k)}\right) dt} = 0
\]
with respect to arbitrary variations $\delta x^{(k)}$ 
with fixed endpoints.
\end{itemize}
\end{cor}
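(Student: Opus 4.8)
The plan is to establish the corollary in three stages. First I would verify the asserted form \eqref{eq:sum_of_squares} of the reduced Lagrangian; second, deduce from it that the force $F$ of \eqref{eq:force} vanishes; and third, use this together with the Noether invariance from Lemma \ref{lem:Noether} to promote the constrained (Lagrange--d'Alembert) statements of Theorem \ref{thm:particle_method} to the unconstrained (Euler--Lagrange) statements claimed here.

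For the first stage, write $L(\varphi,\dot\varphi)=\tfrac12\|\dot\varphi\|_\alpha^2-V(\varphi)$ with $V$ the $G^{(k)}_\odot$-invariant potential. By right-invariance of $\langle\cdot,\cdot\rangle_\alpha$ the kinetic term equals $\tfrac12\|u\|_\alpha^2$ with $u=\dot\varphi\circ\varphi^{-1}$, and through the isomorphism \eqref{psi_k} one has $u=\inter(\dot x^{(k)})+\xi^{(k)}_{x^{(k)}}$. Expanding the square produces the two diagonal terms together with a cross term $\langle\inter(\dot x^{(k)}),\xi^{(k)}_{x^{(k)}}\rangle_\alpha$. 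The decisive point is that $\xi^{(k)}_{x^{(k)}}\in\mathfrak{g}^{(k)}_{x^{(0)}}$, so the mechanical condition \eqref{mechanical_interpolation} kills this cross term identically; recalling that the $X^{(k)}$-metric is by definition the $\inter$-pullback of $\langle\cdot,\cdot\rangle_\alpha$ and that $V$ descends to $U$ on $X^{(k)}$, this yields \eqref{eq:sum_of_squares}.

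For the second stage I would feed \eqref{eq:sum_of_squares} into \eqref{eq:force}. Since $\ell^{(k)}$ depends on its third slot only through the \emph{purely quadratic} term $\tfrac12\|\xi^{(k)}_{x^{(k)}}\|^2$, substituting $\xi^{(k)}_{x^{(k)}}=\epsilon\,\widetilde{B}(\dot x^{(k)},\delta x^{(k)})$ gives a contribution proportional to $\epsilon^2$, whose derivative at $\epsilon=0$ vanishes; hence $F\equiv0$. This is exactly where the mechanical hypothesis earns its keep: it is the absence of any term \emph{linear} in $\xi$ that annihilates the curvature force. With $F=0$ in hand, the Lagrange--d'Alembert equation \eqref{eq:LDA} of Theorem \ref{thm:particle_method}(ii) becomes verbatim the Euler--Lagrange equation of item (ii) here, and the Lagrange--d'Alembert principle of Theorem \ref{thm:particle_method}(iv) becomes verbatim the unconstrained variational principle of item (iv); so those two equivalences follow immediately from Theorem \ref{thm:particle_method}. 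The equivalence of item (iii) with item (i) is then the standard equivalence of Hamilton's principle with the Euler--Lagrange equations on $\SDiff(M)$, with the condition $\dot\varphi_0\in\Delta_\inter$ merely carried along.

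It remains to bridge item (i) with the constrained formulation Theorem \ref{thm:particle_method}(i). Here I invoke Lemma \ref{lem:Noether}: since $\Delta_\inter=J_L^{-1}(0)$ and $J_L$ is conserved along the \emph{unconstrained} Euler--Lagrange flow, the initial condition $\dot\varphi_0\in\Delta_\inter$ propagates to $\dot\varphi_t\in\Delta_\inter$ for all $t$. Consequently an unconstrained solution with such initial data automatically satisfies the constraint and requires zero constraint force, so it also solves Theorem \ref{thm:particle_method}(i); conversely, uniqueness of the constrained flow (with initial data in $\Delta_\inter$) forces the constrained solution to coincide with this unconstrained one. I expect this last bridge to be the only genuinely delicate step: it is precisely the place where one must use that $\Delta_\inter$ is a momentum-map level set rather than an arbitrary distribution, so that the constraint force vanishes and the weakening of the pointwise constraint $\dot\varphi_t\in\Delta_\inter$ to the single initial condition is harmless. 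The algebra of the first two stages is routine by comparison.
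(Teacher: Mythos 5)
Your proposal is correct and follows essentially the same route as the paper: the quadratic dependence of $\ell^{(k)}$ on $\xi^{(k)}_{x^{(k)}}$ forces $F=0$, and Lemma \ref{lem:Noether} renders the constraint $\dot{\varphi}_t \in \Delta_{\inter}$ redundant, so that the corollary becomes a restatement of Theorem \ref{thm:particle_method}. The only difference is that you spell out, via conservation of $J_L$ together with uniqueness of the constrained flow, the bridge between the unconstrained Euler--Lagrange equation of item (i) and the constrained formulation of Theorem \ref{thm:particle_method}(i) --- a step the paper compresses into the phrase ``redundancy of the constraints'' --- which is a filling-in of the same argument rather than a different one.
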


\begin{proof}
As $\ell^{(k)}$ is quadratic in $\xi^{(k)}_{x^{(k)}}$, 
we conclude that the fiber derivative 
$\partial \ell^{(k)}/\partial \xi^{(k)}_{x^{(k)}}$ at
$\xi^{(k)}_{x^{(k)}}= 0$ vanishes.  Thus $F = 0$.
Moreover, by Lemma \ref{lem:Noether}, we know that the constraint $\dot{\varphi} \in \Delta_{\mathcal{I}}$ is redundant when $\dot{\varphi}_0 \in \Delta_{\mathcal{I}}$.  
The equivalence of items (i) through (iv) is a re-statement of Theorem \ref{thm:particle_method} with $F=0$ and this redundancy of the constraints.
\end{proof}

\begin{remark}{\rm 
  We thank Paula Baseiro for pointing out the following observation.  Corollary \ref{cor:particle_method} can be seen as a Lagrangian mechanical interpretation of the Hamiltonization methods used in non-holonomic systems.  In particular, Corrollary \ref{cor:particle_method} deals with a \emph{Chaplygin system} and can be seen as a special case of \cite[theorem 8.4]{Koiller1992}.} \quad $\lozenge$
\end{remark}

Corollary \ref{cor:particle_method} suggests that we may estimate solutions to certain Lagrangian systems on $\SDiff(M)$ by solving an Euler-Lagrange equation on $X^{(k)}$.
Then the estimated velocity field would be given by $\mathcal{I}\left( \dot{x}^{(k)}\right)$.
Such methods exhibit potentially desirable properties, which we clarify now.

\subsection{Kelvin's circulation theorem for particles} \label{sec:kelvin}
Recall the remaining symmetry of the horizontal equation mentioned in Remark \ref{rmk:higher_order}.  If the 
$k^{\rm th}$
order interpolation method is compatible with this symmetry,
then the numerical method suggested in Corollary
\ref{cor:particle_method}(ii) also possesses this symmetry.
This is a valuable property if one searches for numerical
methods conserving geometric features of the system.  We formalize this statement in the following theorem.

\begin{thm} \label{thm:discrete_kelvin}
Assume the setup of Corollary \ref{cor:particle_method}.  
Let $x^{(k)}(t) \in X^{(k)}$ be an integral curve of the 
Euler-Lagrange equations for $L_{X^{(k)}}$.  If the interpolation method, $\inter$, is 
$\mathcal{J}^k_\odot(G_\odot)$-invariant, then 
$L_{X^{(k)}}$ is $\mathcal{J}^k_{\odot}(G_\odot)$-invariant.  Moreover, for each $\omega$ in the Lie algebra of $\mathcal{J}^k_{\odot}(G_\odot)$, the quantity
\[
J_\omega\left(\dot{x}^{(k)}\right) := 
\left\langle \dot{x}^{(k)} , x^{(k)} \circ \omega 
\right\rangle_X
\]
is conserved.  Here, $x \circ \omega \in  
T_x X^{(k)}$ is given by $x \circ \omega:= 
\left. \frac{d}{d\epsilon} \right|_{\epsilon = 0} 
( x \circ y_\epsilon)$ for an arbitrary curve 
$y_\epsilon$, such that 
$\left.\frac{d y_\epsilon}{d \epsilon}\right|_{\epsilon=0} =
\omega$ (composition is for jets).  Finally, $J_\omega( T\pi( \dot{\varphi}_t) )$ is a conserved quantity for the Euler-Lagrange equations on $\SDiff(M)$, where 
$j^k_{\odot}: \SDiff(M) \to X^{(k)}$ is the quotient projection. If $\dot{\varphi}_0 \in \Delta_\mathcal{I}$ and 
$\dot{x}^{(k)}_0 = j^{(k)}_\odot 
\left(\dot{\varphi}_0\right)$, 
then $J_\omega\left(\dot{x}^{(k)}\right) = 
J_\omega\left(j^{(k)}_\odot \left(\dot{\varphi}\right)
\right)$.
\end{thm}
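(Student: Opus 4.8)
My plan is to read the three conclusions as a chain: propagation of invariance, Noether's theorem on the finite-dimensional manifold $X^{(k)}$, and a bridge to the exact Noether law on $\SDiff(M)$ furnished by the mechanical connection. First I would establish the invariance of $L_{X^{(k)}}$. Since $G^{(k)}_\odot$ is normal in $G_\odot$ (Proposition \ref{prop:normal_subgroup}), the right $G_\odot$-action on $\SDiff(M)$ descends to a right action of $\mathcal{J}^k_\odot(G_\odot)=G_\odot/G^{(k)}_\odot$ on $X^{(k)}=\SDiff(M)/G^{(k)}_\odot$, acting on jets by $x^{(k)}\mapsto x^{(k)}\circ g$; because every $\psi\in G_\odot$ fixes $\odot$, this action leaves the base point $x^{(0)}=\pi^k_0(x^{(k)})$ fixed and only stirs the higher-order jet data. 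Using the sum-of-squares form \eqref{eq:sum_of_squares}, I would write $L_{X^{(k)}}(x^{(k)},\dot{x}^{(k)})=\tfrac12\langle\dot{x}^{(k)},\dot{x}^{(k)}\rangle_X-U(x^{(k)})$; the assumed $\mathcal{J}^k_\odot(G_\odot)$-invariance of $\inter$ makes the induced metric $\langle\cdot,\cdot\rangle_X$ invariant, and the $G_\odot$-invariance of $L$ forces $U$ to be invariant, so $L_{X^{(k)}}$ is $\mathcal{J}^k_\odot(G_\odot)$-invariant.

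I would then apply Noether's theorem to the invariant finite-dimensional system $(X^{(k)},L_{X^{(k)}})$. The infinitesimal generator of $\omega$ at $x^{(k)}$ is exactly $x^{(k)}\circ\omega$, so the conserved momentum is $\langle\partial L_{X^{(k)}}/\partial\dot{x}^{(k)},x^{(k)}\circ\omega\rangle$. Because the kinetic term is the quadratic form $\tfrac12\langle\dot{x}^{(k)},\dot{x}^{(k)}\rangle_X$, its fiber derivative is $\partial L_{X^{(k)}}/\partial\dot{x}^{(k)}=\langle\dot{x}^{(k)},\cdot\rangle_X$, and the conserved quantity is precisely $J_\omega(\dot{x}^{(k)})=\langle\dot{x}^{(k)},x^{(k)}\circ\omega\rangle_X$, conserved along integral curves of the Euler-Lagrange equations for $L_{X^{(k)}}$.

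To reach $\SDiff(M)$, I would use that the $G_\odot$-invariance of $L$ gives, by Noether exactly as in Lemma \ref{lem:Noether} but with $\mathfrak{g}_\odot$ in place of $\mathfrak{g}^{(k)}_\odot$, a conserved momentum map $J_L:T\SDiff(M)\to\mathfrak{g}_\odot^\ast$ with $\langle J_L(\dot{\varphi}),\tilde{\omega}\rangle=\langle\dot{\varphi}\circ\varphi^{-1},\varphi_*\tilde{\omega}\rangle_\alpha$ for any lift $\tilde{\omega}\in\mathfrak{g}_\odot$ of $\omega$. On the constraint $\Delta_\inter$ the relation $\xi^{(k)}_{x^{(k)}}=\dot{\varphi}\circ\varphi^{-1}-\inter(\dot{x}^{(k)})=0$ gives $\dot{\varphi}\circ\varphi^{-1}=\inter(\dot{x}^{(k)})$ and $T\pi(\dot{\varphi})=j^k_\odot(\dot{\varphi})=\dot{x}^{(k)}$. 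Writing $x^{(k)}\circ\omega=j^k_\odot(T\varphi\circ\tilde{\omega})$, I observe that $\inter(x^{(k)}\circ\omega)$ and $\varphi_*\tilde{\omega}$ are both divergence-free with the same $k$-jet at $x^{(0)}$, so their difference lies in $\mathfrak{g}^{(k)}_{x^{(0)}}$; the mechanical-connection property \eqref{mechanical_interpolation}, that $\inter(\dot{x}^{(k)})$ is $\langle\cdot,\cdot\rangle_\alpha$-orthogonal to $\mathfrak{g}^{(k)}_{x^{(0)}}$, then collapses this difference and yields $J_\omega(\dot{x}^{(k)})=\langle\inter(\dot{x}^{(k)}),\inter(x^{(k)}\circ\omega)\rangle_\alpha=\langle\inter(\dot{x}^{(k)}),\varphi_*\tilde{\omega}\rangle_\alpha=\langle J_L(\dot{\varphi}),\tilde{\omega}\rangle$. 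This simultaneously identifies $J_\omega(T\pi(\dot{\varphi}_t))$ with the Noether-conserved $\langle J_L(\dot{\varphi}_t),\tilde{\omega}\rangle$ and gives the final equality $J_\omega(\dot{x}^{(k)})=J_\omega(j^k_\odot(\dot{\varphi}))$. I expect this last step to be the main obstacle: since $\inter$ provides only one extension of the prescribed jet to a global divergence-free field, the crux is to verify that the resulting ambiguity is confined to $\mathfrak{g}^{(k)}_{x^{(0)}}$ and is annihilated by the mechanical connection, which is exactly what makes the particle momentum $J_\omega$ coincide with the exact fluid momentum.
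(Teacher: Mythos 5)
Your proposal is correct, and its first two parts track the paper's own proof almost exactly: the paper establishes invariance of $L_{X^{(k)}}$ by the one-line computation $L_{X^{(k)}}(\dot{x}\circ y) = L(\inter(\dot{x}\circ y)) = L(\inter(\dot{x})) = L_{X^{(k)}}(\dot{x})$ (you do the equivalent thing by splitting the sum-of-squares form \eqref{eq:sum_of_squares} into an $\inter$-induced metric plus potential), and then both you and the paper get conservation of $J_\omega\left(\dot{x}^{(k)}\right)$ from Noether's theorem on the finite-dimensional system. The genuine difference is in the third claim. The paper disposes of it in one sentence: it asserts by Noether's theorem that $J_\omega\left(T\pi\left(\dot{\varphi}\right)\right)$ is conserved along the Euler--Lagrange flow on $\SDiff(M)$, and then matches trajectories via Corollary \ref{cor:particle_method}. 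You instead \emph{prove} the identification of the particle momentum with the fluid momentum: on $\Delta_{\inter}$ you show $J_\omega\left(j^k_\odot\dot{\varphi}\right) = \left\langle \dot{\varphi}\circ\varphi^{-1}, \varphi_*\tilde{\omega}\right\rangle_\alpha$, using that $\varphi_*\tilde{\omega}$ and $\inter\left(x^{(k)}\circ\omega\right)$ are divergence-free fields with the same $k$-jet at $x^{(0)}$ (both push to $x^{(k)}\circ\omega = j^k_\odot\left(T\varphi\circ\tilde{\omega}\right)$ under jet composition with $x^{(k)}$), so their difference lies in $\mathfrak{g}^{(k)}_{x^{(0)}}$ and is killed by the mechanical-connection orthogonality \eqref{mechanical_interpolation}. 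This extra work buys something real: off the constraint set, $J_\omega\circ T\pi$ and the canonical $G_\odot$-Noether momentum differ by the cross term $\left\langle \xi^{(k)}_{x^{(k)}}, \varphi_*\tilde{\omega} - \inter\left(x^{(k)}\circ\omega\right)\right\rangle_\alpha$, so the paper's unconditional-sounding conservation statement is really a statement on the flow-invariant submanifold $\Delta_{\inter} = J_L^{-1}(0)$ of Lemma \ref{lem:Noether} --- which is precisely where your identification lives. In short, your argument is the paper's argument with the one glossed-over step made explicit, and it clarifies exactly where the mechanical-connection hypothesis enters the third claim.
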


\begin{proof}
If $\inter( \cdot)$ is 
$\mathcal{J}_\odot^k(G_\odot)$-invariant, then 
$\inter(\dot{x} \circ y) = \inter(\dot{x})$ for 
all $y \in \mathcal{J}_\odot^{k}(G_\odot)$.  Thus 
$L_{X^{(k)}}( \dot{x} \circ y) = L( \inter(\dot{x} \circ y) ) 
= L( \inter( \dot{x}) ) = L_{X^{(k)}}(\dot{x})$, and so 
$L_{X^{(k)}}$ is $\mathcal{J}_{\odot}^k(G_\odot)$-invariant.  
Moreover, if $L$ is $G_\odot$-invariant, then $\ell^{(k)}$ 
is $G_\odot / G^{(k)}_\odot$ invariant.  Remembering the 
identification $\mathcal{J}^k_\odot(G_\odot) \equiv 
G_\odot / G_\odot^{(k)}$, the desired invariance holds for 
$\ell^{(k)}$. By Noether's Theorem, $J_\omega(\dot{x})$ and 
$J_\omega( T\pi(\dot{\varphi}))$ are conserved along the
flow of the Euler-Lagrange equations for $L$ and 
$L_{X^{(k)}}$, respectively. If,
in addition $\dot{\varphi}_0 \in \Delta_\mathcal{I}$ and 
$\dot{x}^{(k)}_0 = j^{(k)}_\odot 
\left(\dot{\varphi}_0\right)$, then $\dot{x}^{(k)} = 
j^{(k)}_\odot \left(\dot{\varphi}\right)$ by 
Corollary \ref{cor:particle_method} and the theorem 
follows.
\end{proof}

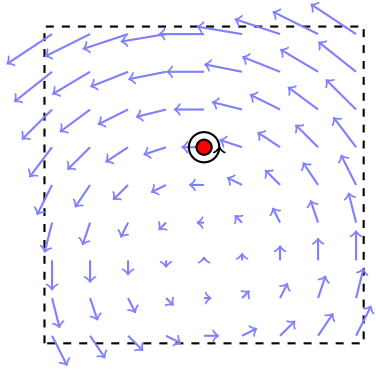
\begin{figure}[h]
\centering
\begin{tikzpicture}[thick]
\draw[dashed] (-2.1,-2.1) rectangle (2.1,2.1);
\foreach \x in {-2.0,-1.5,...,2.0}
\foreach \y in {-2.0,-1.5,...,2.0}
\draw[->,draw=blue!50] (\x,\y) -- ({\x-0.2*(\y+1)} ,
{0.2*\x+\y});
\draw[fill=red] (0,0.5) circle (0.1cm);
\draw (0,0.5) circle (0.2);
\draw[->] (0.2,0.5) -- (0.2,0.51);
\end{tikzpicture}
\caption{A particle-centric version of Kelvin's circulation
theorem corresponds to conservation of circulation along
infinitesimal curves circling the particles.  For $k=1$ this implies
that the momenta conjugate to the spin and the rate of stretching are conserved.}
\label{fig:circulation}
\end{figure}

Theorem \ref{thm:discrete_kelvin} may be viewed as a particle-centric version of Kelvin's circulation theorem.
In particular, it is known that the conserved quantity for the particle relabeling symmetry of an ideal fluid is given by the circulation.
This implies that the integral of the dot product of the fluid velocity along the tangent vector of an advected closed curve is conserved by the
inviscid fluid equations~\cite[Chapter 1]{ArKh1992}.
Theorem \ref{thm:discrete_kelvin} tells us that infinitesimal loops around the particles (see Figure \ref{fig:circulation}) conserve the same quantity and
this conservation law can be manifested in a higher order particle method.
If $k=1$, this implies that the flow conserves the momenta conjugate to the `spin' and `strain rate' of the particles.
For $k > 1$, we can interpret the conserved quantities as manifestations of Kelvin's circulation for $k^{\rm th}$ order perturbations of these infinitesimal loops.
Therefore, when modeling the fluid using the \emph{finite dimensional} Lagrangian system on $X^{(k)}$ with Lagrangian $L_{X^{(k)}}$, one can better control the conservation of momenta and Kelvin's circulation theorem.  In principle, this manifestation of Noether's theorem should allow one to construct integrators that conserve momenta in discrete time; see~\cite{MaWe2001} or~\cite{Hairer2002} for details.

\begin{remark}{\rm 
Variational particle methods for the EPDiff equation have been implemented and analyzed in~\cite{ChDuMa2012} using the
reconstruction mapping mentioned in Remark \ref{ex:interpolation_method}.
The convergence of this method was used to prove global existence and uniqueness in~\cite{ChLiPe2012}.
Therefore, these ideas are not without precedent when $k=0$.}
\end{remark}

\subsection{Spectral Methods and Hybrid Spectral-Particle Methods}
Although the map $\inter$ generally depends on the locations of each of the particles, this may not always be true.  Building upon the ideas of the previous section, we can consider a method which is independent of particle locations
for the case $k=0$.
To do this, choose $N$ linearly independent vector fields, $u_1, \dots, u_N \in \mathfrak{X}_{\mathrm{div}}(M)$.
Then construct the interpolation method $\inter: TX \to
\mathfrak{X}_{\mathrm{div}}(M)$ given by
\begin{align}
\inter(\dot{x}) = \sum_{i=1}^{N} { c_i(\dot{x}) u_i }, \label{eq:spectral}
\end{align}
where the coefficients $c_i$ are described implicitly by the
constraints of Definition \ref{def:kth_order_inter} (it is a linear
algebraic inverse problem). 
The image of the interpolation method given by \eqref{eq:spectral}  on a tangent fiber $T_x X$ is independent of the base point $x \in X$.  In other words, for each vector field given by $\mathcal{I}(x,\dot{x})$ and any point 
$y \in X$ we can find a $\dot{y} \in T_y X$ such 
that $\mathcal{I}(y,\dot{y}) = \mathcal{I}(x,\dot{x})$.  To prove this, consider two distinct particle configurations, $x,y\in X$.  Let $\dot{x} \in T_x X$ and let $c_1, \dots, c_N \in \mathbb{R}$ satisfy \eqref{eq:spectral}.  As the range of $\mathcal{I}$ on the fiber $T_x X$ is identical to the range of $\mathcal{I}$ on the fiber $T_y X$ (it is always in the
span of $u_1, \ldots, u_N$ by \eqref{eq:spectral}), we 
choose 
$\dot{y} =\left. \mathcal{I}^{-1} \right|_{T_y X} \left(\mathcal{I}(x,\dot{x})\right)$. By construction,
$\inter(x,\dot{x}) = \inter(y,\dot{y})$.  In summary, the interpolation method \eqref{eq:spectral} produces a \emph{spectral method}.  That is to say,  solving the Lagrange-d'Alembert equations of Theorem \ref{thm:particle_method}, boils down to solving for vector 
fields contained in the finite dimensional subspace 
$V := \mathrm{span} \{ u_1, \dots, u_N \}$ of the infinite dimensional vector space $\mathfrak{X}_{\mathrm{div}}(M)$.
\begin{remark}{\rm 
One should probably choose $\{u_1,\dots,u_N\}$
such that the space orthogonal to the span of $\{u_1, \ldots, u_N\}$ is a function space of decreasing regularity for increasing $N$,
e.g., a Fourier basis on $M=\mathbb{R}^d$.}
\end{remark}

Judicious choices of bases lead to nice error bounds.
One does not need to choose a full basis, however.
One could choose vector fields $u_1, \dots, u_K$ for $K<N$ to construct an interpolation method of the form
\begin{align}
	\inter(\dot{x}) = \left( \sum_{i=1}^{K}{ c_i u_i } \right) + \left( \sum_{i = K+1}^{N}{ \mathcal{I}_i( x_{K+1}, \dots, x_N , \dot{x}_i ) } \right)  \label{eq:spectral_particle}
\end{align}
which is partially spectral but may depend on the locations 
of the  particles of $x_{K+1}, \dots, x_N$.
The $\mathcal{I}_i$'s are vector bundle morphisms to the vector space $\mathfrak{X}_{\mathrm{div}}(M)$ such that $\inter$ is an interpolation method (this condition does not determine the $\mathcal{I}_i$'s and so one still has some freedom in choosing these maps).
The method induced by \eqref{eq:spectral_particle} can be considered a multiscale model.
We can speculate this claim when $u_1, \dots, u_k$ are chosen to keep track of the `large-scale dynamics' while the remaining degrees of freedom (which depend on the motion of particles) can be interpreted as `fine-scale parameters'.

\subsection{Vortex Methods}
It was shown in~\cite{OlShk2001} that Chorin's vortex blob algorithm (originally developed for simulating inviscid
fluids) provides an exact solution to the equations of motion for an ideal, inviscid, homogeneous, incompressible second grade fluid.
More specifically, Chorin's vortex blob algorithm yields solutions to the Lagrangian system on $\SDiff(M)$ with the reduced Lagrangian given in terms of the spatial velocity by
\begin{align}
	\ell^{(1)}_\alpha(u) := \frac{1}{2} \int_{\mathbb{R}^2}{ u(m) \cdot ( [1- \alpha^2 \Delta] \cdot u)(m) dm}, \label{eq:alpha_lag}
\end{align}
for some $\alpha > 0$.
By construction, the vortex blob method conserves circulation by advecting Gaussian blobs of vorticity.
In this subsection, we will describe in words how the vortex blob method may be viewed as a particular case of the algorithm described in this paper.
Consider a $1^{\rm st}$ order interpolation method, $\inter$, such that the spin of each particle is mapped by $\inter$ to the spatial velocity field of a vortex blob, and the translational component of the particles is mapped by $\inter$ to a spatial velocity field with zero vorticity above each particle.  Then, by Theorem \ref{thm:discrete_kelvin}, if we initialize the system with a spatial velocity field consisting solely of vortex blobs located above the particles, it follows that $\inter(\dot{x})$ remains a sum of vortex blobs for all time.
Additionally, the particles are advected by these vortex blobs and the vorticities extremize the action of 
$\ell^{(1)}_\alpha$.
This is precisely the evolution of the vortex blob algorithm.

  Additionally, there are variants of the vortex blob method which allow for deformations of vortex blobs, as would be the case of an algorithm induced by a $k^{\rm th}$ order interpolation method for $k > 1$. For example,~\cite{UmWaBa2010} used Hermite functions to generate a basis of vorticity functions above each vortex blob to do just this.
It is not immediately clear whether the method of \cite{UmWaBa2010} satisfies a variational principle and so it is not known whether such a method matches the framework described in this paper.
Nonetheless, appealing to higher-order deformations of vortices did yield higher accuracy in numerical experiments for the case $k=2$.
Another interesting method potentially linked to our approach, is the adaptive smoothed particle hydrodynamics proposed in~\cite{ASPH} wherein
the particles are equipped with shape data.

\section{Extensions to other fluids} \label{sec:other_fluids}

Given the structure presented so far, we may summarize the process of extending these ideas to other types of fluids.  In this section, we  sketch constructions to apply this framework to complex fluids, Euler-$\alpha$ models, and template matching problems.

\subsection{Complex Fluids}

Consider the set $\mathcal{F}(M,\mathcal{O})$ of smooth maps from a manifold $M$ to a Lie group $\mathcal{O}$.
The Lie group $\SDiff(M)$ acts on $\mathcal{F}(M,\mathcal{O})$ on the right by group homomorphisms via $f \in \mathcal{F}(M,
\mathcal{O}) \mapsto f \circ \varphi \in \mathcal{F}(M,\mathcal{O})$ for each $\varphi \in \SDiff(M)$.
This right action defines the semi-direct product Lie group $\SDiff(M) \,\circledS\, \mathcal{F}(M,\mathcal{O})$, where the group multiplication is $(\varphi_1,f_1) \cdot (\varphi_2,f_2):= (\varphi_1 \circ \varphi_2 , (f_1 \circ \varphi_2) \cdot f_2)$.
A unifying framework for ideal complex fluids was described in~\cite{GR2009} by performing Euler-Poincar\'{e} reduction on Lagrangian systems with configuration manifolds of the form $\SDiff(M)\,\circledS\, \mathcal{F}(M,\mathcal{O})$.
The number of scenarios captured by this framework is truly remarkable (magnetohydrodynamics, spin-glass fluids, liquid crystals, fluids with microstructure, Yang-Mills fluids,
superfluids, etc.).
As with ideal fluids, these systems exhibit $\SDiff(M)$ symmetry, and so it should be clear that one may reduce by $G_\odot^{(k)} \subset \SDiff(M)$.
This could be particularly fruitful for the case of fluids with microstructure, where the Lagrangian depends on the relative orientations of fluid particles.
For example, in the case of micromorphic nematic liquid crystals, $\mathcal{O} = \SO(3)$.
We see that $X^{(1)}$ for $M =\mathbb{R}^3$ is the trivial
bundle $(\mathbb{R}^3)^N \times \SL(3, \mathbb{R})^N$. Since 
$\SO(3)$ is a subgroup of $\SL(3,\mathbb{R})$, it follows that 
a method on $X^{(1)}$ is capable of expressing the orientation of liquid crystals.  However, the Lagrangian for liquid crystals usually depends on the gradient of a field in $\mathcal{F}(M, \mathcal{O})$.  Thus, we must be able to express gradients of diffeomorphisms above each particle, and this suggests reducing by $G_\odot^{(2)}$ and using Euler-Lagrange equations on $X^{(2)}$ to model liquid crystals.

\subsection{Second grade fluids and template matching problems}

A kinetic energy on $M = \mathbb{R}^3$ given by \eqref{eq:alpha_lag} with $\alpha > 0$ leads to the Euler-$\alpha$ model in the incompressible case.
We may even consider the Lagrangian
\[
L_{\mathbb{I}}(\varphi, \dot{\varphi}) = \frac{1}{2}\int{u\cdot\mathbb{I}(u)d^3x},
\]
where $\mathbb{I}$ is a positive definite differential operator on $\mathfrak{X}(M)$ (again, the Helmoltz operator $I - \alpha  \Delta$ is a good example).
The momentum conjugate to $u$ is given by the convolution 
$G * u$, where $G$ is the Green's function for $\mathbb{I}$.
In this case, we may even consider compressible fluids if we use interpolation methods which map to the full set of smooth vector fields on $M$ (see~\cite[Chapter 11]{HoScSt2009} for a good overview).
Moreover, if $G$ is non-singular, the interpolation methods mentioned in Example \ref{ex:interpolation_method} exhibit a mechanical connection and allows us to use Corollary \ref{cor:particle_method} to produce particle methods.  Such methods could be useful in the case of template matching problems, wherein one seeks to find a distance between images by integrating geodesic flows on $\Diff(M)$ (see, 
\cite{Beg2005,Bruveris2011}).  In such a scenario, the material representation of the ``fluid'' is particularly meaningful, and particle methods become a natural choice
(see \cite{BruverisEllis2011,BaBruCoMi2012}).
In particular, the jet-data framework was discussed for arbitrary $k$ in $\mathbb{R}^2$ and was even implemented for $k=1$ \cite{Sommer2013}.

\section{Conclusions}

Since the publication of~\cite{Arnold1966}, it has been well known that Euler's equations of motion for an ideal, inviscid, incompressible, homogeneous fluid may be regarded as Euler-Poincar\'{e} equations on the Lie algebra of divergence free vector fields tangent to the boundary.
Therefore, it has been clear that one could reduce by various subgroups of the diffeomorphism group.
In this paper we have accomplished one version of this reduction, using the notion of an interpolation method.
We chose to use interpolation methods instead of principal connections (as in~\cite{CeMaRa2001}) to clarify the notion of estimating the velocity field of
the fluid with \emph{particles}.
Specifically, if an interpolation method is chosen, one may estimate the spatial velocity field of the fluid and even estimate the evolution of the system over short times by integrating a non-holonomically constrained version of the equations.
We also discussed the reduction process for higher-order isotropy groups.
This discussion led to more sophisticated particle methods which contained extra symmetry and a particle-centric version of Kelvin's circulation theorem.
The particle-centric Kelvin's theorem was particularly meaningful, since the integrals of motion are conserved by the exact evolution on $\SDiff(M)$.

Future work includes:
\begin{itemize}
\item Implementing an instance of the particle method
described in Theorem \ref{thm:particle_method} and Corollary \ref{cor:particle_method} for $k > 1$ and comparing
performance with the vortex blob method and a smooth-particle hydrodynamics method.
\item Extending these constructions to the fluids and applications mentioned in section \S \ref{sec:other_fluids}.  In particular, we would like to consider applications to liquid crystals and image matching problems.
\item Appending a finite dimensional model of the vertical Lagrange-Poincar\'{e} equations would yield a finite dimensional model of a fluid on a Lie groupoid.
This generalizes ideas contained in ~\cite{Gawlik2011} wherein a finite dimensional Lie group is used.
This could be viewed as a turbulence model in the spirit of~\cite{HoTr2012}.
\end{itemize}

\noindent\textbf{Acknowledgments.}
During the work on this paper we have gotten advice and guidance from many people in a range of fields.
In particular, we would like to thank Andrea Bertozzi and David Uminsky for listening to our ideas and educating us on state of the art vortex blob methods.
We also thank Chris Anderson, Marcel Oliver, and Melvin Leok for helping us avoid certain common pitfalls which occur when the theoretically minded venture into numerics.
We owe special thanks to Darryl D. Holm, Hui Sun, and Joris Vankerschaver for thoughtful discussions and suggestions.
Finally, we thank Jerrold E. Marsden who spent the final years of his life advising the first author with much thought and care.
H. Jacobs and M. Desbrun were funded by NSF grant CCF-1011944.
T.S. Ratiu was partially supported by the government grant of the Russian Federation for support of research projects implemented by leading scientists, Lomonosov Moscow State University under the agreement No. 11.G34.31.0054 and Swiss NSF grant 200021-140238.

\begin{appendix}

\section{A coordinate free derivation of the Lin constraints} \label{app:Lin}
Let $G$ be a Lie group with Lie algebra $\mathfrak{g}$.  We will adopt the standard convention wherein the Lie bracket of $\mathfrak{g}$ is taken to be the Jacobi-Lie bracket of \emph{left} invariant vector fields.  Let $\rho \in \Omega^1( G ; \mathfrak{g})$ be the (right) Maurer-Cartan form
\begin{equation}
\label{MC_form_general}
	\rho( v_g ) = TR_g^{-1} \cdot v_g, \quad v_g \in T_gG.
\end{equation}
By the definition of the exterior-derivative we see that for any $X, Y \in \mathfrak{X}(G)$ we have
\[
	d\rho( X , Y) = \boldsymbol{\pounds}_X [ \rho ( Y ) ] - \boldsymbol{\pounds}_Y[ \rho ( X ) ] - \rho \left( [ X , Y ] \right),
\]
where $\boldsymbol{\pounds}_X$ is the Lie derivative defined
by the vector field $X$.
If $u_g, v_g \in T_g G$, define $\xi := \rho(u_g)$, $\eta := \rho(v_g)$ and construct the right-invariant vector fields 
$\xi_R, \eta_R \in \mathfrak{X}(G)$ defined by $\xi_R(h) = \xi \cdot h$ and $\eta_R(h) = \eta \cdot h$ for any $h \in G$.  Using this we find
\begin{align*}
d\rho(u_g , v_g) &= d\rho( \xi_R(g) , \eta_R(g) ) 
= d\rho( \xi_R , \eta_R) (g) \\
&=  \left\{\boldsymbol{\pounds}_{\xi_R} [ \rho ( \eta_R ) ] - \boldsymbol{\pounds}_{\eta_R}[ \rho ( \xi_R ) ] - \rho \left( [ \xi_R , \eta_R ] \right) \right\}(g) \\
&= -\rho( -[\xi,\eta]_R )(g) = [ \xi , \eta] \\
		&= [\rho(u_g) , \rho(v_g) ].
\end{align*}
This is the \emph{Maurer-Cartan equation}.

\begin{prop}
\label{prop_appendix_A}
  Let $U \subset \mathbb{R}^2$ be an open set and let $g:U \to G$ be an embedding.  If we coordinatize $U$ by $(s,t)$ and let $\partial_s$ and $\partial_t$ denote the corresponding vector fields in $\mathfrak{X}(U)$, define $\dot{g} := Tg \circ \partial_t$ and $\delta g: = Tg \circ \partial_s$.  Finally, if $\xi := \rho \circ \dot{g}$ and $\eta := \rho \circ \delta g$, then
  \[
  	\partial_s \xi = \partial_t \eta + [ \eta , \xi ]
  \]
\end{prop}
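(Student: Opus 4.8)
The plan is to exploit the Maurer-Cartan equation $d\rho(u_g,v_g) = [\rho(u_g),\rho(v_g)]$ established above, by pulling the right Maurer-Cartan form $\rho$ back along $g$ and evaluating its exterior derivative in two different ways. Concretely, set $\alpha := g^\ast \rho \in \Omega^1(U;\mathfrak{g})$, the $\mathfrak{g}$-valued one-form on $U$ obtained by pulling back $\rho$. By the definitions of $\dot g$ and $\delta g$ we have $\alpha(\partial_t) = \rho(Tg\circ\partial_t) = \rho(\dot g) = \xi$ and, likewise, $\alpha(\partial_s) = \rho(\delta g) = \eta$; here $\xi$ and $\eta$ are regarded as $\mathfrak{g}$-valued functions on $U$.

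First I would compute $d\alpha(\partial_s,\partial_t)$ intrinsically on $U$. The standard formula for the exterior derivative of a (vector-space-valued) one-form gives
\[
d\alpha(\partial_s,\partial_t) = \partial_s\big(\alpha(\partial_t)\big) - \partial_t\big(\alpha(\partial_s)\big) - \alpha\big([\partial_s,\partial_t]\big).
\]
Since $\partial_s$ and $\partial_t$ are coordinate vector fields, $[\partial_s,\partial_t]=0$, and the identifications above yield $d\alpha(\partial_s,\partial_t) = \partial_s\xi - \partial_t\eta$, where each derivative is the ordinary derivative of a $\mathfrak{g}$-valued function.

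Next, I would compute the same quantity using naturality of the exterior derivative under pullback, $d\alpha = d(g^\ast\rho) = g^\ast(d\rho)$. Evaluating on the coordinate fields and using $Tg\circ\partial_s = \delta g$ and $Tg\circ\partial_t = \dot g$, this reads $d\alpha(\partial_s,\partial_t) = d\rho(\delta g,\dot g)$. The Maurer-Cartan equation applies pointwise to the tangent vectors $\delta g, \dot g \in T_gG$, giving $d\rho(\delta g,\dot g) = [\rho(\delta g),\rho(\dot g)] = [\eta,\xi]$. Equating the two expressions for $d\alpha(\partial_s,\partial_t)$ then produces $\partial_s\xi - \partial_t\eta = [\eta,\xi]$, which is exactly the claimed identity after rearranging. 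Note that the embedding hypothesis on $g$ is not essential here: naturality of $d$ only requires $Tg$ to carry the coordinate fields to the relevant tangent vectors.

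The calculation is essentially mechanical once the Maurer-Cartan equation is in hand, so the only place requiring genuine care is the bookkeeping of signs and argument order. In particular, the slot order $d\rho(\delta g,\dot g)$ must match the order $(\partial_s,\partial_t)$ chosen in the intrinsic computation, and it is this choice that delivers the bracket as $[\eta,\xi]$ rather than its negative. The sign already built into the Maurer-Cartan equation accounts for the use of the \emph{left}-invariant bracket convention on $\mathfrak{g}$ together with the \emph{right} Maurer-Cartan form, so no further adjustment is needed. I expect this sign-tracking, rather than any conceptual difficulty, to be the main potential pitfall.
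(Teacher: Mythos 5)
Your proof is correct, and it reaches the identity by a genuinely different mechanism than the paper, even though both hinge on the same key input, the Maurer--Cartan equation $d\rho(u_g,v_g)=[\rho(u_g),\rho(v_g)]$. The paper works directly on $G$: it extends $\dot g$ and $\delta g$ to vector fields $X,Y\in\mathfrak{X}(G)$ tangent to the embedded surface $g(U)$, evaluates $d\rho(X,Y)$ via the Cartan formula, converts the Lie derivatives $\boldsymbol{\pounds}_X[\rho(Y)]$ and $\boldsymbol{\pounds}_Y[\rho(X)]$ into the partial derivatives $\partial_t\eta$ and $\partial_s\xi$ by a flow/uniqueness-of-integral-curves argument, and kills the term $\rho([X,Y])$ using the fact that embeddings preserve Jacobi--Lie brackets together with $[\partial_t,\partial_s]=0$. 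You instead pull everything back to $U$: setting $\alpha=g^*\rho$, you compute $d\alpha(\partial_s,\partial_t)$ once intrinsically (where the commuting of coordinate fields is immediate) and once via naturality $d(g^*\rho)=g^*(d\rho)$ plus Maurer--Cartan, then equate. What your route buys is economy and generality: there is no extension of tangent vectors to ambient vector fields, no flow argument, and --- as you correctly note --- no need for $g$ to be an embedding, only a smooth map; this makes superfluous the paper's subsequent caveat that deformations must be (locally) embeddings for the proposition to apply. What the paper's route buys is that it stays entirely inside the toolkit it has already set up on $G$ (the Cartan formula and right-invariant extensions used to derive the Maurer--Cartan equation itself), rather than additionally invoking naturality of the exterior derivative under pullback --- a standard fact, but one that in the intended application $G=\operatorname{SDiff}(M)$ is being used formally on an infinite-dimensional Fr\'echet manifold, at the same level of formality as the rest of the appendix. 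Your sign bookkeeping (slot order $(\partial_s,\partial_t)$ matching $d\rho(\delta g,\dot g)$, yielding $[\eta,\xi]$) is consistent and delivers exactly the stated identity.
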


\begin{proof}
	As $g:U \to G$ is an embedding, we see that $g(U)$ is a surface in $G$.  Moreover, $\dot{g}(s,t)$ is a vector over $g(s,t)$ tangent to the surface $g(U)$.  The same holds for $\delta g(s,t)$.  Let $X , Y \in \mathfrak{X}(G)$ be arbitrary vector fields which satisfy $X( g(s,t) ) = \dot{g}(s,t)$ and $Y(g(s,t)) = \delta g(s,t)$ for any $(s,t) \in U$.  Then $g_*( \partial_t), g_*( \partial_s) \in \mathfrak{X}( g(U) )$ and $X,Y \in \mathfrak{X}(G)$ satisfy $\left. X \right|_{g(U)} = g_*( \partial_t)$ and $\left. Y \right|_{g(U)} = g_*( \partial_s)$.
	
By the Maurer-Cartan equation we see that
\[
d\rho(\dot{g}(s,t) ,\delta g(s,t) ) = [\xi(s,t) , \eta(s,t) ].
\]
	However, by the definition of the exterior derivative we see that
	\begin{align*}
		d\rho( \dot{g}(s,t) , \delta g(s,t) ) &= d\rho( X , Y)( g(s,t) ) \\
			&= \left\{\boldsymbol{\pounds}_X[ \rho(Y) ] - \boldsymbol{\pounds}_Y[ \rho(X)] - \rho( [X,Y] ) \right\} (g(s,t) ).
	\end{align*}
	Let us look at each of the terms of this sum individually.  We find
\[
\boldsymbol{\pounds}_X[ \rho(Y) ] (g(s,t) ) = \left. \frac{d}{d \tau } \right|_{\tau = 0} \rho(Y( \Phi_X^\tau( g(s,t) ) ) )
\]
	where $\Phi_X^\tau$ is the flow of $X$.  As $X$ is tangential to the surface $g(U)$ we see that $\Phi_X^\tau( g(s,t) ) \in g(U)$.  In particular $\Phi_X^\tau(g(s,t) ) = g(s, t + \tau)$ by uniqueness of integral curves of $X$.  Thus $\rho ( Y( \Phi_X^\tau( g(s,t) ) ) ) = \rho( Y( g(s, t + \tau) ) )= \rho( \delta g(s, t + \tau) ) = \eta(s,t + \tau)$.  In conclusion
\[
\boldsymbol{\pounds}_X[ \rho(Y) ] (g(s,t) ) = \partial_t \eta(s,t).
\]	
By the same token
\[
\boldsymbol{\pounds}_Y[ \rho(X) ](g(s,t) ) = \partial_s \xi.
\]
	Finally, as $g:U \to G$ is an embedding, it preserves the Jacobi-Lie bracket of vector field from $\mathfrak{X}(U)$ to $\mathfrak{X}(g(U))$.  Thus  we find
	\[
		\left. [X, Y] \right|_{g(U)} = [ g_*( \partial_t) , g_*( \partial_s) ] = g_* [ \partial_t, \partial_s] = 0
	\]
and hence
	\[
		\rho( [X , Y] )(g(s,t) ) = \rho( [X,Y](g(s,t) ) ) = \rho(0) = 0.
	\]
	Therefore $[\xi,\eta] = d\rho( \dot{g} , \delta g) = \partial_t \eta - \partial_s \xi$.
\end{proof}

In the applications of this paper, $g(s, t)$ appears as a 
deformation of a given smooth curve $g(t)$. We shall always
use these deformations to be at least immersions, which are
locally embeddings around a point in $G$.

As a final note we remind the reader that the bracket above is with respect to the left-Lie algebra.  However, the relation between the left Lie algebra and the right Lie algebra is given by $[\xi , \eta]_L = - [\xi, \eta]_R$.  Therefore, if one works in the right Lie algebra, then the above proposition states $\partial_s \xi = \partial_t \eta - [\eta , \xi]_{R}$.

In the applications used in this paper, the group is 
$\SDiff(M)$ whose \textit{left} Lie algebra is 
$\mathfrak{X}_{\rm div}(M)$ endowed with minus the standard
Jacobi-Lie bracket of vector fields. So, if $\varphi:U 
\subset \mathbb{R}^2 \rightarrow \SDiff(M)$ is an 
embedding and $\dot{\varphi} = T \varphi \circ \partial_t$,
$\delta \varphi = T \varphi \circ \partial_s$,
$u(s,t) = \dot{\varphi}(s,t) \circ \varphi(s,t)^{-1}$, and
$v(s,t) = \delta\varphi(s,t) \circ \varphi(s,t)^{-1}$, we have
\begin{equation}
\label{lin}
\frac{\partial}{ \partial s} u = \frac{\partial}{ \partial t}v
-[v,u]_{\rm Jacobi-Lie}.
\end{equation}
\end{appendix}

\bibliographystyle{siam}
\bibliography{JaRaDe_2012}

\end{document}